\renewcommand{\phi}{\varphi}
\newtheorem{theorem}{Theorem}[section]
\newtheorem{lemma}[theorem]{Lemma}
\newtheorem{corollary}[theorem]{Corollary}
\newtheorem{defi}[theorem]{Definition}
\newenvironment{definition}{\begin{defi} \rm}{ \end{defi}}
\newtheorem{exa}[theorem]{Example}
\newtheorem{rem}[theorem]{Remark}
\newenvironment{remark}{\begin{rem} \rm}{ \end{rem}}
\newtheorem{prob}[theorem]{Problem}
\newenvironment{problem}{\begin{prob} \rm}{ \end{prob}}
\DeclareMathOperator{\range}{range}
\DeclareMathOperator{\Thm}{Thm}
\newcommand{\Ax}{\operatorname{Ax}}
\newcommand{\set}[1]{\{{#1}\}}
\def\G1{\hbox{$\displaystyle{\mbox{\ding{172}}}$}}
\title[Trial and error mathematics:
dialectical systems and completions of theories]{Trial and error mathematics:\\
dialectical systems and completions of theories}
\author[J.~Amidei]{Jacopo Amidei}
\address{School of Computing \& Communications\\
The Open University\\
Milton Keynes, UK}
\email{jacopo.amidei@open.ac.uk}
\author[U.~Andrews]{Uri Andrews}
\address{Department of Mathematics\\
University of Wisconsin\\
Madison, WI 53706-1388\\
USA}
\email{\href{mailto:andrews@math.wisc.edu}{andrews@math.wisc.edu}}
\urladdr{\url{http://www.math.wisc.edu/~andrews/}}
\author[D.~Pianigiani]{Duccio Pianigiani}
\address{Dipartimento di Ingegneria Informatica e Scienze Matematiche\\
Universit\`a Degli Studi di Siena\\
I-53100 Siena, Italy}
\email{duccio.pianigiani@unisi.it}
\author[L.~San Mauro]{Luca San Mauro}
\address{Institute for Discrete Mathematics and Geometry\\
Vienna University of Technology\\
1040 Vienna, Austria}
\email{luca.san.mauro@tuwien.ac.at}
\urladdr{\url{http://dmg.tuwien.ac.at/sanmauro/}}
\author[A.~Sorbi]{Andrea Sorbi}
\address{Dipartimento di Ingegneria Informatica e Scienze Matematiche\\
Universit\`a Degli Studi di Siena\\
I-53100 Siena, Italy}
\email{andrea.sorbi@unisi.it}
\keywords{Dialectical system, $q$-dialectical system, $p$-dialectical system, completion}
\subjclass[1991]{03A99, 03D55, 03D99}
\thanks{The authors wish to thank the anonymous referee of a version of this paper,
in particular for her/his precious comments and suggestions on the history and philosophy of ``trial
and error mathematics''.
San Mauro was partially supported
by the Austrian Science Fund FWF through projects P~27527 and M~2461.
Sorbi is a member of the INDAM-GNSAGA group.}
\begin{document}

\begin{abstract}
This paper is part of a project that is based on the notion of dialectical
system, introduced by Magari as a way of capturing trial and error
mathematics. In \cite{trial-errorsI} and \cite{trial-errorsII}, we
investigated the expressive and computational power of dialectical systems,
and we compared them to a new class of systems, that of quasidialectical
systems, that enrich Magari's systems with a natural mechanism of revision.
In the present paper we consider a third class of systems, that of
$p$-dialectical systems, that naturally combine features coming from the
two other cases. We prove several results about $p$-dialectical systems and
the sets that they represent. Then we focus on the completions of
first-order theories. In doing so, we consider systems with connectives,
i.e.  systems  that encode the rules of classical logic. We show that any
consistent system with connectives represents the completion of a given
theory. We prove that dialectical and $q$-dialectical systems coincide with
respect to the completions that they can represent. Yet, $p$-dialectical systems are
more powerful: we exhibit a $p$-dialectical
system representing a  completion of Peano Arithmetic which is neither dialectical
nor $q$-dialectical.
\end{abstract}

\maketitle

\section{Introduction and background}
Formal systems represent mathematical theories in a rather static way, in
which axioms of the represented theory have to be defined from the beginning,
and no further modification is permitted. It has often been argued that this
representation is not comprehensive of all aspects of real mathematical
theories: see, for instance, the seminal work of
Lakatos~\cite{Lakatos:Proofs}
for arguments against any hastily correspondence between formal systems and
the way in which mathematicians deal with real theories.
Our goal is to model cases in which a mathematician, when defining a new
theory, chooses axioms through some trial and error process, instead of
fixing them, once for all, at the initial stage. A possible way of
characterizating such cases is provided by the so-called experimental logics,
firstly studied by Jeroslow in the 1970's~\cite{Jeroslow:experimental} (for a
nice  discussion about these logics and their philosophical meaning the
reader is referred to K\aa sa~\cite{Kasa:PhD}).  Our approach is based on
another notion, that of dialectical systems, introduced by
Magari~\cite{Magari:SucerteTeorie} in the same period.  In doing so, we
continue the investigations initiated in \cite{trial-errorsI} and
\cite{trial-errorsII}.

The basic ingredients of \emph{dialectical  systems} are a number $c$,
encoding a contradiction; a deduction operator $H$, that tells us how to
derive consequences from a finite set of statements $D$; and a proposing
function $f$, that proposes statements to be accepted or rejected as
provisional theses of the system. In \cite{trial-errorsI}, we introduced a
new class of systems, that of \emph{$q$-dialectical systems} (there called
``quasidialectical''), by  enriching Magari's systems with a natural
mechanism of revision. This is obtained by means of two additional
ingredients: a replacement function $f^-$, that provides for all axioms a
substituting axiom, and a symbol $c^-$, that encodes any sort of problem,
possibly weaker than mathematical contradiction, that can justify the
replacement of a certain axiom. In \cite{trial-errorsI} and
\cite{trial-errorsII}, we drew an accurate comparison between the
expressivness of these two systems. In particular, we showed the following:
dialectical sets and $q$-dialectical sets (i.e., the set of statements that
are eventually accepted by, respectively, dialectical and $q$-dialectical
systems) are always $\Delta^0_2$; the two systems have the same computational
power, in the sense that the class of Turing degrees that contains a
dialectical set coincide with the class of Turing degrees that contains a
$q$-dialectical set (and in fact they are equivalent to the class of
computably enumerable Turing degrees); yet, $q$-dialectical sets form a class
which is much larger than that of dialectical sets, since $q$-dialectical
sets  inhabit each level of Ershov hierarchy, while dialectical sets are all
$\omega$-computably enumerable.

In this paper we consider a third class of systems, called
\emph{$p$-dialectical systems}. Their introduction is motivated by two main
reasons. First, $p$-dialectical systems naturally combines, in their
behaviour, features characterizing the other two classes: they have a
mechanism of revision, as in the case of $q$-dialectical systems, but they do
not distinguish between $c$ and $c^-$, having only $c$ in their syntax, as in
the case of dialectical systems. In fact, dialectical and $q$-dialectical
systems can be defined as modifications of $p$-dialectical systems, as we
will do in the next section. The second important reason  for focusing on
this new class is connected with the completions of first-order theories.

As is shown below, if we restrict to the case of \emph{systems with
connectives}, i.e.\ systems in which the deduction operator $H$ has to
satisfy the rules of classical logic, then we do obtain the following: if $S$
is a system that does  not derive the contradiction from the empty set of
premises, then $S$ is the \emph{completion} of a given theory. We make use of
this fact to compare the expressiveness of our systems regarded as machines
to build, in the limit, completions. We show that dialectical and
$q$-dialectical completions coincide, all lying in the class of $\omega$-c.e.
sets. On the contrary, $p$-dialectical systems are much more powerful: for
every effectively indexed class of $\Delta^{0}_{2}$ sets we exhibit a
concrete example of a $p$-dialectical system representing a $p$-dialectical
set which is a completion of Peano Arithmetic not lying in that class. Each
such $p$-dialectical system  can be also looked at as an example of how a
$p$-dialectical system works in concrete, perhaps the first such examples
even considering \cite{trial-errorsI,trial-errorsII}, more concerned with
laying down the theoretical bases rather than examples and applications.

We would like to remark at this point that although dialectical systems may
be viewed as a possible approach to trial and error mathematics, the emphasis
in this paper is of a rather abstract nature. More than on the adequacy of
these systems to formalize trial and error mathematics, we are mainly
interested in the computability theoretic properties of the dialectical sets
(i.e. the sets represented by these systems), and in the use of dialectical
systems and our suggested variations of the dialectical procedure as tools
for producing more and more complicated $\Delta^0_2$ completions of
consistent formal theories having a strong enough expressive power. In
Section~\ref{sct:comparison} however we sketch a brief comparison between
dialectical systems and other approaches based on knowledge or assumptions
revision.

Although the exposition of this paper is rather self-contained, a certain
familiarity with the definitions of dialectical and $q$-dialectical systems,
as presented in  \cite{trial-errorsI}, might help the reader that aims at
fully understanding the behaviour of the $p$-dialectical systems we will
introduce next. Our computable theoretic notions are standard and as in
Soare~\cite{Soare:Book}.

\subsection{The $p$-dialectical systems}
A $p$-dialectical system  shall be thought as a machine for constructing a
theory in stages, by adjusting the set of axioms whenever a contradiction is
derived. This is the same intuition that both dialectical and $q$-dialectical
systems aim at modelling (see \cite{trial-errorsI}). What distinguishes the
three cases is how they respond to the emergence of a contradiction, and
whether they are allowed to revise an axiom, when this is temporary rejected
by system, instead of being forced to fully dismiss it. We shall begin with
the formal definition of $p$-dialectical systems.

In what follows, if $f$ is the so-called proposing function, we will denote
$f(i)$ with $f_i$.

\begin{definition}\label{def:$p$-systems}
A \emph{$p$-dialectical system} is a quadruple $p=\langle
H, f, f^-,c\rangle$, where
\begin{enumerate}
\item $H$ is an enumeration operator such that $H(\emptyset)\ne \emptyset$,
    $H(\{c\})=\omega$, and $H$ is an algebraic closure operator, i.e., $H$
    satisfies, for every $X \subseteq \omega$,
\begin{itemize}
  \item $X \subseteq H(X)$;
  \item $H(X) \supseteq H(H(X))$.
\end{itemize}
\item $f$ is a computable permutation of $\omega$;
\item $f^-$ is an \emph{acyclic} computable function, i.e.,~for every $x$, the
    $f^{-}$-orbit of $x$, i.e. the set
    \[
    \set{x, f^-(x), f^-(f^-(x)),
    \ldots, (f^-)^n(x), \ldots},
    \]
    is infinite.
\end{enumerate}

We call $f$ the \emph{proposing function}, $f^-$ the \emph{revising
function}, $c$ \emph{the contradiction}.
\end{definition}

\subsection*{The $p$-dialectical procedure}
Given such a $p=\langle H,f,f^-,c\rangle$, and starting from a a computable
approximation $\alpha=\set{H_s}_{s \in \omega}$ (i.e.\ a computable sequence
of finite sets, given by their canonical indices, such that $H_s \subseteq
H_{s+1}$ and $H=\bigcup_s H_s$), define by induction values for several
computable parameters, which depend on $\alpha$: $A_s$ (a finite set), $r_s$
(a function such that for every $x$, $r_{s}(x)$ is a finite string of
numbers, viewed as a vertical string, or \emph{stack}), $m(s)$ (the greatest
number $m$ such that $r_s(m)\ne \langle \mbox{ }\rangle$, where the symbol
$\langle \mbox{ }\rangle$ denotes the empty string). %$h(s)$ (a number).
In
addition, there are the derived parameters: $\rho_{s}(x)$ is the top of the
stack $r_{s}(x)$, $L_{s}(x)=\set{\rho_{s}(y): y<x \text{ and } r_{s}(y)\ne
\langle \mbox{ } \rangle}$, and, for every $i$, $\chi_{s}(i)=
H_{s}(L_{s}(i+1))$.
%\textcolor{red}{Nota cambio}

\subsubsection*{Stage $0$}
Define $m(0)=0$,
\[
r_{0}(x)=
\begin{cases} \langle f_0 \rangle &\text{$x=0$} \\
\langle \mbox{ } \rangle &\text{$x>0$},
\end{cases}
\]
and let $A_0=\emptyset$.

\subsubsection*{Stage $s+1$} Assume $m(s)=m$. We distinguish the following
cases:

\begin{enumerate}
\item there exists no $k \le m$ such that $\{c\} \cap \chi_{s}(k)\ne
    \emptyset$: in this case, let $m(s+1)=m+1$, and define
\[
r_{s+1}(x)=
\begin{cases} r_{s}(x) &\text{if $x \leq m$} \\
\langle f_{m+1} \rangle &\text{if $x = m+1$} \\
\langle \mbox{ } \rangle &\text{if $x > m+1$}; \\
\end{cases}
\]

\item there exists $k\le m$ such that $c\in \chi_{s}(k)$: in this case, let
    $z$ be the least such $k$, let $m(s+1)=z$, %\textcolor{red}{Nota cambio}
    and define, where $\rho_{s}(z)=f_y$,
\[
r_{s+1}(x)=
\begin{cases} r_{s}(x) &\text{$x < z$} \\
r_{s}(x)^{\smallfrown} \langle f^{-}(f_{y}) \rangle &\text{$x = z$} \\
\langle \mbox{ } \rangle &\text{$x >  z+1$}. \\
\end{cases}
\]
\end{enumerate}
Finally  let
\[
A_{s+1}=\bigcup_{i < m(s+1)}\chi_{s+1}(i).
\]
Notice that $A_{s+1}=H_{s+1}(L_{s+1}(m(s+1)))$ if $m(s+1)>0$; otherwise
$A_{s+1}=\emptyset$.

Figures~\ref{fig:pfigure1} and \ref{fig:pfigure2} 
illustrate how we go from stage s to stage s + 1, according to
(1) and (2), respectively, of the definition. The vertical strings above
the various slots represent the various stacks $r(x)$ at the given stage. In
each figure, only the relevant slots are depicted.

\begin{figure}[h!]
\begin{center}
\includegraphics[scale=1]{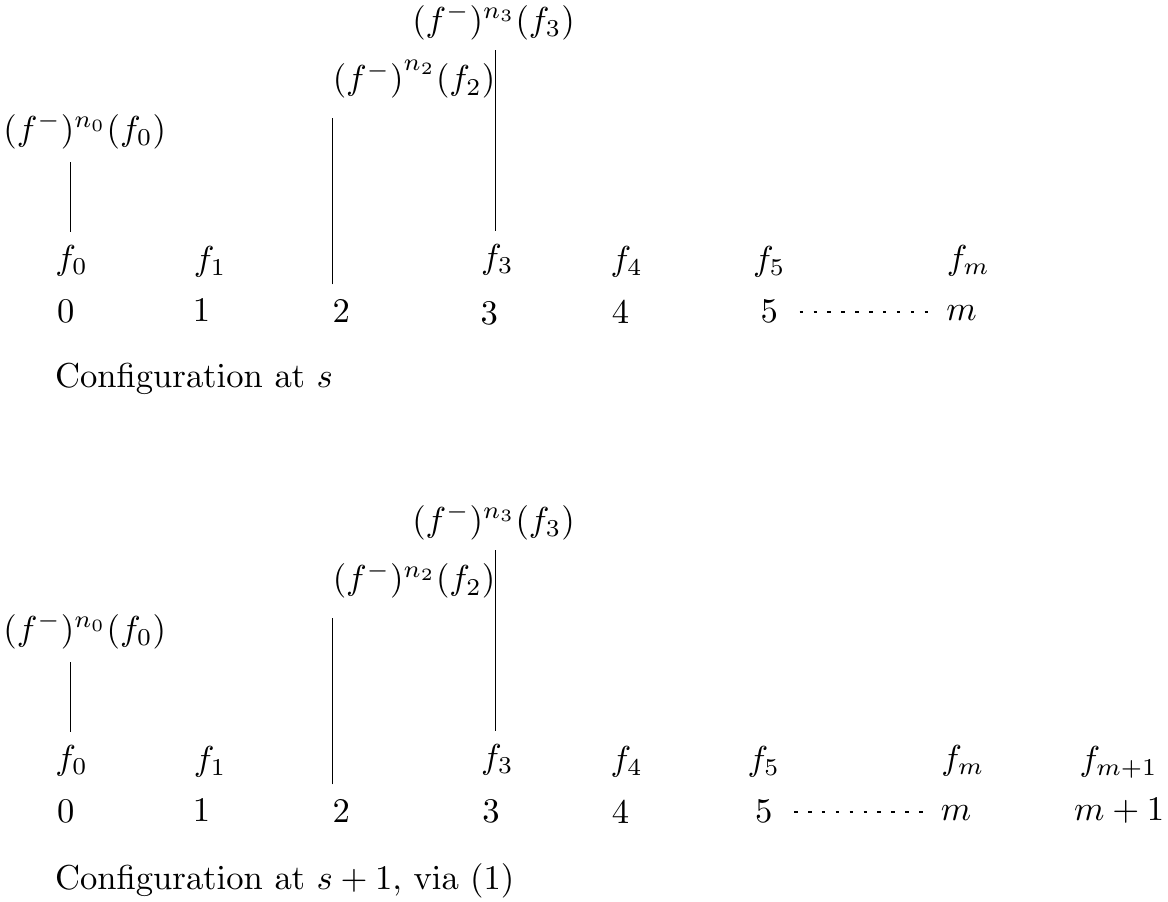}
\end{center}
\caption{From stage $s$ to $s+1$ using (1). }\label{fig:pfigure1}
\end{figure}

\begin{figure}[h!]
\begin{center}
\includegraphics[scale=1]{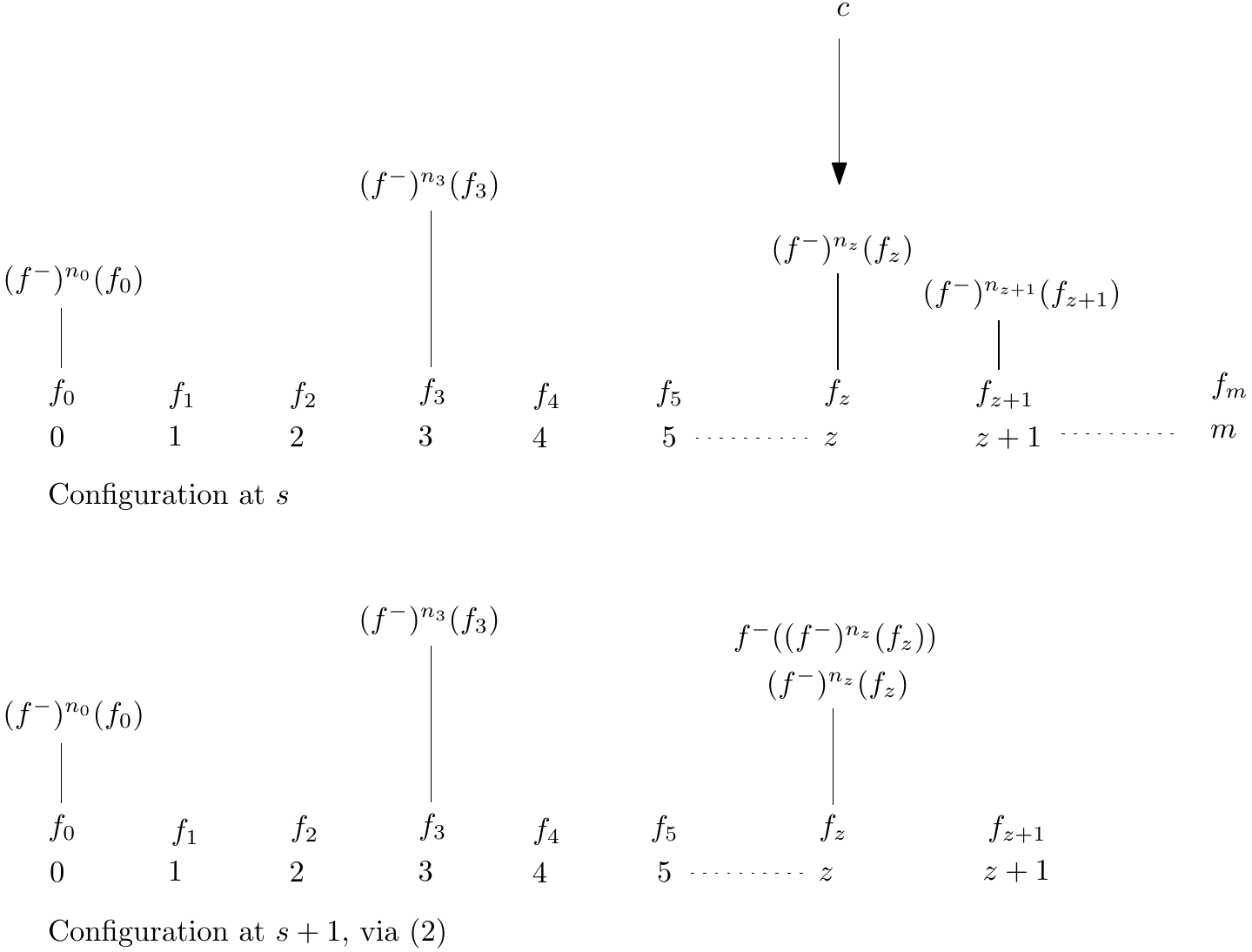}
\end{center}
\caption{From stage $s$ to $s+1$ using (2).}\label{fig:pfigure2}
\end{figure}

We say that a $p$-dialectical system with enumeration operator $H$ is
\emph{consistent} if $c\notin H(\emptyset)$. We call $A_s$ the set of
\emph{provisional theses} of $p$ with respect to $\alpha$ at stage $s$. The
set $A_p$ defined as
\[
A_{p}=\set{f_x: (\exists t)(\forall s \ge t)[f_{x}\in A_{s}]}
\]
is called the set of \emph{final theses} of  $p$: notice that we write $A_p$
and not $A_p^\alpha$ because we are going to show in next theorem that this
set does not in fact depend on the approximation.

In the following theorem and its proof, relatively to any given approximation
$\alpha$ we agree that $\lim_s r_s(u)$ \emph{exists finite} if there exists
$t$ such that $r_s(u)=r_t(u)$ for all $s \ge t$; $\lim_s r_s(u)$ \emph{exists
infinite} if there exists a stage $t$, such that for all $s \ge t$ $r_s(u)$
is an initial segment of $r_{s+1}(u)$ and $\bigcup_{s\ge t} r_{s}(u)$ is an
infinite string; finally we say that $\lim_s r_s(u)$ \emph{does not exist} if
for infinitely many $s$ we have $r_s(v)=\langle \mbox{  }\rangle$.

\begin{lemma}\label{lem:independence}
The set of final theses of a $p$-dialectical system does not depend on the
chosen approximation of the enumeration operator $H$, and, independently of
the approximation, for every $u$, either $\lim_s r_s(u)$ exists finite, or
$\lim_s r_s(u)$ exists infinite and in this case for every $v>u$ $\lim_s
r_s(v)$  does not exist; or $\lim_s r_s(u)$ does not exist, and in this case
also for every $v>u$ $\lim_s r_s(v)$  does not exist.
\end{lemma}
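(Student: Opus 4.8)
The plan is to analyse, for a fixed approximation $\alpha=\{H_s\}_{s\in\omega}$, the eventual behaviour of the stacks $r_s(u)$, and only afterwards to deduce independence from $\alpha$. First I would record two structural facts, each by an easy induction on $s$: (i) at every stage the nonempty slots are exactly $0,1,\dots,m(s)$; and (ii) whenever the stack at a slot $v$ is modified --- by a push in clause (2), by being emptied, or by being reinitialised to $\langle f_v\rangle$ in clause (1) --- all slots $>v$ are left empty at that stage. In particular slot $0$ is never emptied and never reinitialised, so $\lim_s r_s(0)$ can only stabilise or strictly extend forever; this disposes of the base case $u=0$.

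The core of the fixed-$\alpha$ analysis is a case split on $\ell:=\liminf_s m(s)$. If $\ell=\infty$, then for each $u$ there is a stage past which $m(s)>u$ permanently; past it slot $u$ can be neither reinitialised (that would need $m(s)=u-1$) nor pushed onto or emptied (either would force $m(s+1)\le u$), so $\lim_s r_s(u)$ exists finite for every $u$ and the cascade clauses are vacuous. If $\ell<\infty$, fix $t^*$ with $m(s)\ge\ell$ for all $s\ge t^*$. Each of the infinitely many stages $s>t^*$ with $m(s)=\ell$ must be reached from $m(s-1)\ge\ell$ by clause (2) with least contradictory slot exactly $\ell$ (clause (1) would give $m(s)>\ell$), and such a transition pushes onto slot $\ell$ and empties every slot $>\ell$. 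Hence slot $\ell$ is pushed onto infinitely often and never afterwards emptied or reinitialised, so $\lim_s r_s(\ell)$ exists infinite; every slot $u<\ell$ is untouched after $t^*$, so $\lim_s r_s(u)$ exists finite; and every slot $v>\ell$ is emptied infinitely often, so $\lim_s r_s(v)$ does not exist. In either case the trichotomy and the stated cascade hold, with the ``infinite'' slot, when it exists, being this unique $\ell$.

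For independence from $\alpha$ I would define a limit configuration directly from $H,f,f^-$, and then prove that it is attained under every $\alpha$. Recursively on $u$: assuming slots $0,\dots,u-1$ have received finite nonempty limit stacks with tops $g_0,\dots,g_{u-1}$, walk along the (infinite, by acyclicity of $f^-$) orbit $f_u,f^-(f_u),(f^-)^2(f_u),\dots$ and let $n_u$ be the least $n$ with $c\notin H(\{g_0,\dots,g_{u-1}\}\cup\{(f^-)^n(f_u)\})$; if $n_u$ exists, give slot $u$ the stack up to $(f^-)^{n_u}(f_u)$ and continue, otherwise give it the infinite stack and declare all later slots unstable. One then proves, by induction on $u$, that for every $\alpha$ the limit $\lim_s r_s(u)$ is precisely this prescribed stack, of the prescribed type. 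The step I expect to be the main obstacle is ruling out that the timing of $\alpha$ lets slot $u$ ``overshoot'' its prescribed top, by advancing while the slots below it are still at non-final values; this is exactly what fact (ii) prevents. Indeed, taking $T$ to be the least stage from which slots $0,\dots,u-1$ have reached their limits, fact (ii) gives $r_T(u)=\langle\,\rangle$ (the last revision at a lower slot emptied it); from $T$ on the lower slots carry $g_0,\dots,g_{u-1}$, and no contradiction is derivable from any $\{g_0,\dots,g_k\}$ with $k<u$ (otherwise that slot would be revised again, contradicting the choice of $T$), so at stage $T+1$ slot $u$ is reinitialised to $\langle f_u\rangle$ and from then on is only ever pushed onto, moving one step along the orbit exactly when $c$ enters $H_s(\{g_0,\dots,g_{u-1}\}\cup\{\text{current top}\})$, which occurs iff $c$ genuinely belongs to that approximation-free set. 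So slot $u$ halts at $(f^-)^{n_u}(f_u)$, or never, as prescribed; and the cascade established above shows all slots past the first unstable one are unstable too.

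Finally I would read off $A_p$ from the limit configuration, using that $H$, being an enumeration operator, is finitary, and that $A_s=H_s(L_s(m(s)))$ when $m(s)>0$ (and $A_s=\emptyset$ otherwise). If $\ell=\infty$ one shows $A_p=\bigcup_u H(\{g_0,\dots,g_u\})=H(\{g_u:u\in\omega\})$, the nontrivial inclusion ``$\subseteq$'' coming from evaluating $A_s$ at a stage immediately after the last one with $m(s)<M$, at which $m(s)=M$ and the slots below $M$ are already stable. If $0<\ell<\infty$ then evaluating $A_s$ at the infinitely many stages with $m(s)=\ell$ yields $A_p=H(\{g_0,\dots,g_{\ell-1}\})$, and if $\ell=0$ then $A_s=\emptyset$ infinitely often, so $A_p=\emptyset$. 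In every case $A_p$ is described purely in terms of $H,f,f^-$, hence does not depend on $\alpha$, which completes the proof.
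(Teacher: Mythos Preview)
Your proposal is correct and rests on the same inductive core as the paper: once slots $0,\dots,u-1$ have stabilised to values $g_0,\dots,g_{u-1}$ determined by $H,f,f^-$ alone, slot $u$ is reinitialised to $\langle f_u\rangle$ and thereafter is only pushed, advancing along the $f^-$-orbit exactly when $c\in H(\{g_0,\dots,g_{u-1}\}\cup\{\text{current top}\})$, a condition independent of the approximation. The differences are organisational rather than substantive. You first establish the trichotomy for a \emph{fixed} approximation via the device $\ell=\liminf_s m(s)$, which cleanly pins down the unique slot (if any) with an infinite limit stack; the paper instead argues the cascade directly from the shape of clause~(2) and intertwines it with the bit-by-bit induction on $r(u)_i$. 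You also fold the computation of $A_p$ (as $H(\{g_0,\dots,g_{\ell-1}\})$, or $H(\{g_u:u\in\omega\})$, or $\emptyset$) into the lemma itself, whereas the paper separates this off as Theorem~\ref{thm:independence}; since the lemma as stated already asserts independence of $A_p$, your packaging is arguably the more natural one.
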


\begin{proof}
Let $p=\langle H, f, f^-, c\rangle$ be a $p$-dialectical system and
$\alpha=\{H_s\}$ an approximation to $H$.

First of all, if $\lim_s r_s(u)$ exists infinite, the every time we redefine
$r_s(u)$ we also set $r_s(v)=\langle \mbox{  }\rangle$ for every $v>u$;
moreover it is easy to see that if $\lim_s r_s(v)$  does not exist then there
is some $u<v$ such that $\lim_s r_s(u)$ exists infinite.

So the claim about $\lim_s r_s(u)$ amounts to show that either $\lim_s
r_s(u)$ exists finite for every $u$ or there is a least $u$ such that $\lim_s
r_s(u)$ exists infinite.

Now, $L(0)=\lim_s L_s(0)=\emptyset$ and clearly this value does not depend on
the approximation. Suppose that $L(u)$ reaches limit and the limit does not
depend on the approximation, and let us consider $u+1$: we show by induction
on $i$ that after $L(u)$ has reached limit, the $i$-th bit $r(u)_i$ is the
same (whether defined or undefined) whatever approximation one considers.
Now, $r(u)_0= \langle f_u \rangle$ whatever the approximation; and clearly,
assuming the claim for $i$, we have that $r(u)_{i+1}=f^-(r(u)_{i})$ if and
only if $c \in H(L(u)\cup \{r(u)_{i}\})$, which shows independence from the
approximation. In particular $\lim_s r_s(u)$ exists either finite or
infinite, independently of the approximation.
\end{proof}

\begin{theorem}\label{thm:independence}
As granted by the previous lemma, let $u$ be the greatest number $\le \omega$
such that the limit value $L(u)$ exists finite, i.e. for every $v<u$ $\lim_s
r_s(v)$ exists finite. If $u>0$ then, independently of the approximation,
$A_p =H(L(u))$ (where $L(\omega)= \bigcup_{v \in \omega}L(v)$ if $u=\omega$),
and $A_p=L(\omega)$ if $u=\omega$. If $u=0$ then, independently of the
approximation, $A_p=\emptyset$.
\end{theorem}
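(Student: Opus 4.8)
The plan is to use Lemma~\ref{lem:independence} to reduce everything to the limit objects: by that lemma the limit stacks $r(y)=\lim_s r_s(y)$ (when this limit is finite), the sets $L(v)$, and the number $u$ do not depend on the approximation $\alpha$, so it suffices to fix one $\alpha$, identify $A_p$ with an expression built from these objects, and note that the expression is approximation-free. I would begin with the remark that $u>0$ forces consistency: if $c\in H(\emptyset)$ then $\set c\subseteq H(\emptyset)$ and algebraic closure give $\omega=H(\set c)\subseteq H(H(\emptyset))\subseteq H(\emptyset)$, so $H(\emptyset)=\omega$; consequently every stack is extended at every stage it is touched, $\lim_s r_s(0)$ is infinite, and $u=0$. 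Then I split into the cases $u=0$, $0<u<\omega$, $u=\omega$. The case $u=0$ is immediate: $\lim_s r_s(0)$ is infinite, so the stack at $0$ is extended infinitely often, which can only happen through case~(2) with $z=0$; each such step sets $m(s+1)=0$, hence $A_{s+1}=\emptyset$, so $A_s=\emptyset$ for infinitely many $s$ and $A_p=\emptyset$.

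For $0<u<\omega$, the stacks at $0,\dots,u-1$ stabilize, and the fact that the stack at $w-1$ stops growing at $\rho(w-1)$ says exactly that $c\notin H(L(w))$ for $1\le w\le u$ (for $w=0$ this is consistency). Two consequences for large $s$: since $\chi_s(k)=H_s(L(k+1))$ and $c\notin H(L(k+1))$ for $k<u$, in case~(2) the index $z$ chosen is always $\ge u$; hence while $m(s)<u$ case~(1) must apply and $m$ climbs, while once $m(s)\ge u$ it stays $\ge u$, so from some large stage on $m(s)\ge u$. Now $H(L(u))\subseteq A_p$: for large $s$ we have $A_s=H_s(L_s(m(s)))\supseteq H_s(L(u))$, since $m(s)\ge u$ and the stacks below $u$ are frozen force $L(u)\subseteq L_s(m(s))$; as $H(L(u))=\bigcup_s H_s(L(u))$, every element of $H(L(u))$ lies in $A_s$ for all large $s$. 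Conversely $A_p\subseteq H(L(u))$: since $\lim_s r_s(u)$ is infinite, the stack at $u$ is extended via case~(2) at $z=u$ infinitely often, and at every stage $s+1$ immediately following such a step $m(s+1)=u$ and $L_{s+1}(u)=L(u)$, so $A_{s+1}=H_{s+1}(L(u))\subseteq H(L(u))$; any element of $A_p$ belongs to these $A_{s+1}$.

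For $u=\omega$ all stacks stabilize, and the heart of the argument --- the step I expect to be the main obstacle --- is to show $m(s)\to\infty$. I would prove, by strong induction on $v$, that $m(s)\le v$ for only finitely many $s$: a stage with $m(s)=v$ is entered either from level $v-1$ by case~(1), which happens only finitely often since by induction $m(s)<v$ only finitely often, or from a higher level by case~(2) with $z=v$, which extends the stack at $v$; that stack is emptied only by a case~(2) at an index $<v$ or a case~(1) at a level $<v-1$, both finite by induction, so infinitely many entries from above would extend the stack at $v$ infinitely often after its last emptying, contradicting stabilization. Given $m(s)\to\infty$, each finite $w$ again satisfies $c\notin H(L(w))$, and since the enumeration operator $H$ commutes with the directed union over the chain $\{L(w)\}$ and $L(\omega)=\bigcup_w L(w)$, we get $c\notin H(L(\omega))=\bigcup_w H(L(w))$. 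This yields $H(L(\omega))=L(\omega)$: if $f_j\in H(L(\omega))$ and the stack at $j$ were ever revised, then $c\in H(L(j)\cup\set{f_j})\subseteq H(H(L(\omega)))=H(L(\omega))$, a contradiction, so $\rho(j)=f_j\in L(\omega)$. Finally $A_p=H(L(\omega))$: the inclusion $\supseteq$ goes exactly as in the finite case, using $m(s)\to\infty$; for $\subseteq$, let $\tau_v$ be the last stage with $m(\cdot)\le v$ (finite by the induction, and $\tau_v\to\infty$) --- at $\tau_v$ case~(1) is applied, $m(\tau_v)=v$, the stacks at $0,\dots,v$ are already at their final values, and $m(\tau_v+1)=v+1$, so $A_{\tau_v+1}=H_{\tau_v+1}(L(v+1))\subseteq H(L(\omega))$, and any element of $A_p$ lies in $A_{\tau_v+1}$ for all large $v$. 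Combining, $A_p=H(L(\omega))=L(\omega)$; and since in every case the final expression involves only the approximation-independent data $L(v)$, $A_p$ does not depend on $\alpha$.
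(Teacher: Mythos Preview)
Your argument is correct, and the overall structure --- the case split on $u$ and the two inclusions --- matches the paper's. The genuine departure is in the $u=\omega$ case, for the direction $A_p\subseteq L(\omega)$. The paper argues elementwise: if $f_x\notin L(\omega)$ then $c\in H(L(x)\cup\{f_x\})$ and $f_x\notin H(L(x))$; whenever $f_x$ shows up in some $A_{t_0}$ one chases the index $m(t_0)=v$ downward, repeatedly using that $c$ eventually enters $H_s(L_{t_0}(v))$ and forces $m$ to drop, until a stage is reached at which $f_x$ has been expelled. You instead first isolate the lemma $m(s)\to\infty$, then exhibit the cofinal sequence of ``clean'' stages $\tau_v+1$ at which $A_{\tau_v+1}=H_{\tau_v+1}(L(v+1))\subseteq H(L(\omega))$; you also make the equality $H(L(\omega))=L(\omega)$ explicit via $c\notin H(L(\omega))$, whereas in the paper this equality is only implicit. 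Your route is more modular and yields a useful intermediate statement; the paper's chase-down is shorter but more ad~hoc.

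Two small comments on your induction for $m(s)\to\infty$. First, ``from a higher level'' should read ``from level $\ge v$'', since case~(2) with $z=v$ may be entered from $m(s-1)=v$ itself; and besides being \emph{emptied}, the stack at $v$ can also be \emph{reset} to $\langle f_v\rangle$ via case~(1) at level $v-1$ --- this is likewise finite by the inductive hypothesis, so the conclusion stands. Second, the induction is heavier than needed: since every stack has a nonempty finite limit and the procedure maintains the invariant that exactly the stacks $0,\dots,m(s)$ are nonempty, once stacks $0,\dots,v$ have reached their final values one has $m(s)\ge v$ automatically, and $m(s)\to\infty$ follows at once.
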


\begin{proof}
Let $u$ be as in the statement of the theorem. Let us show first that
$H(L(u)) \subseteq A_p$. If $u<\omega$ then cofinitely many times we have
$L(u) \subseteq L_s(m(s))$, which implies $H_s(L(u)) \subseteq H_s(L_s(m(s))$;
hence cofinitely many times we have $H_s(L(u)) \subseteq A_s$, which implies
that
%cofinitely many times $H(L(u)) \subseteq A_s$ and thus
$H(L(u))
\subseteq A_p$, independently of the approximation. If $u=\omega$ then for
every $v$ an argument similar to the previous case shows that $L(v)\subseteq
H_s(L(v)) \subseteq A_s$ for cofinitely many $s$, and thus $L(\omega) \subseteq
A_p$, independently of the approximation.

We want now to show now that $A_p \subseteq H(L(u)))$ and $A_p \subseteq
L(\omega)$ if $u=\omega$: in the latter case by properties of $H$ this implies
also that $A_p \subseteq H(L(\omega))$. We distinguish again the two possible
cases:
\begin{itemize}
  \item $u<\omega$: for infinitely many $s$, we have that (whatever the
      approximation) $A_s=H_s(L(u))$ which shows that $A_p \subseteq
      H(L(u))$.
  \item $u=\omega$: suppose now that $f_x \notin L(\omega)$. Then, whatever
      the approximation, $c \in H(L(x) \cup \{f_x\})$, and thus $f_x \notin
      H_s(L_s(x))$ for every big enough stage. Let $t$ be a stage such that
      starting from this stage $L(x)$ has reached limit already and $f_x
      \notin H_s(L_s(x))$ for every $s \ge t$. Let $t_0 \ge t$ be such that
      $f_x\in H_{t_0}(L_{t_0}(v))$ ($v=m(t_0)$: notice that $x<v$); then
      there is a stage $s_1 \ge t_0$ such that $c \in H_{s_1}(L_{t_0}(v))$;
      it follows that there is a stage $t_1 \ge t_0$ such that $L(v)$
      changes value at $t_1$, giving $m(t_1)< v$. If $f_x \notin A_{t_1}$
      then we have found a stage $\ge t_0$ at which$f_x \notin A_p$ has
      changed; otherwise we repeat the same argument, but taking
      $v=m(t_1)$. By choice of $t$ and properties of $x$, it is clear that
      proceeding in this way we end up with some $t' \ge t_0$ such that
      $f_x \notin A_{t'}$. We have shown that for every $t_0$ such that
      $f_x\in A_{t_0}$ there is a later stage $t'$ such that $f_x\notin
      A_{t'}$. As this works for whatever approximation we use, this shows
      that $f_x \notin A_p$ whatever the approximation. We have thus shown
      that $A_p \subseteq L(\omega)$.
\end{itemize}
Finally we consider the case $u=0$. In this case $m(s)=0$ infinitely many
times, then $A_{p}=\emptyset$, whatever the approximation.
\end{proof}

\begin{definition}
A pair $(p,\alpha)$ where $p=\langle H, f, f^-, c\rangle$ is a
$p$-dialectical system and $\alpha$ is an approximation to $H$ is called
\emph{loopless} if for every $u$, the set $\{\rho_s(u): s \in \omega\}$ is
finite.
\end{definition}

\begin{remark}
In view of the previous theorem if there is a loopless approximation then all
approximations are loopless, and we will be justified in talking about a
\emph{loopless} $p$-dialectical system, and referring to the \emph{final
theses} $A_p$ of $p$, without mentioning any special approximation $\alpha$
to the enumeration operator $H$ of $p$.
\end{remark}

\begin{corollary}
If $p$ is loopless then $A_p=L(\omega)$.
\end{corollary}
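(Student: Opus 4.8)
The plan is to invoke Theorem~\ref{thm:independence}: that theorem already tells us $A_p=L(\omega)$ whenever the index $u$ appearing there equals $\omega$, so it suffices to show that looplessness forces $u=\omega$, i.e.\ that $\lim_s r_s(v)$ exists finite for every $v\in\omega$. First I would note that the statement is well-posed: by the Remark following Theorem~\ref{thm:independence} looplessness does not depend on the approximation $\alpha$, and by Theorem~\ref{thm:independence} neither do $A_p$ nor $L(\omega)$; so I may fix an $\alpha$ with $(p,\alpha)$ loopless and argue with it.

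By Lemma~\ref{lem:independence}, for each $u$ exactly one of three things happens: $\lim_s r_s(u)$ exists finite, exists infinite, or does not exist, and I would rule out the last two. Suppose $\lim_s r_s(u)$ exists infinite, and fix a stage $t$ past which $r_s(u)$ is always a nonempty initial segment of $r_{s+1}(u)$ with $\bigcup_{s\ge t}r_s(u)$ infinite. For $s\ge t$ we have $r_s(u)\ne\langle\,\rangle$, hence $u\le m(s)$, so in clause~(1) of the $p$-dialectical procedure the stack $r_s(u)$ is copied unchanged, while in clause~(2) it is copied unchanged when $u<z$ and emptied when $u>z$ — the latter impossible here. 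Thus for $s\ge t$ the only way $r_s(u)$ can change is via clause~(2) with $z=u$, and each such change appends $f^-(\rho_s(u))$ to the stack. Since there are infinitely many such changes (the lengths tend to infinity, growing by one each time), the successive top values are $\rho_t(u),\,f^-(\rho_t(u)),\,(f^-)^2(\rho_t(u)),\ldots$, which are pairwise distinct because $f^-$ is acyclic; hence $\{\rho_s(u):s\in\omega\}$ is infinite, contradicting looplessness.

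Therefore $\lim_s r_s(u)$ never exists infinite, and then, by the observation made in the proof of Lemma~\ref{lem:independence} that whenever $\lim_s r_s(v)$ does not exist there is some $u<v$ with $\lim_s r_s(u)$ infinite, no $\lim_s r_s(v)$ fails to exist either. Hence $\lim_s r_s(v)$ exists finite for all $v$, so $u=\omega$ in the notation of Theorem~\ref{thm:independence}, which gives $A_p=L(\omega)$. The only delicate point is the bookkeeping in the middle paragraph — checking that, in the ``exists infinite'' case, the stack grows purely by repeatedly appending $f^-$ of its current top, so that acyclicity of $f^-$ converts looplessness into the failure of the ``$\lim_s r_s(u)$ exists infinite'' alternative; the rest is a direct application of the two preceding results.
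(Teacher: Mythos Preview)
Your proof is correct and follows exactly the route the paper intends: the paper's own proof is simply ``See the proof of Theorem~\ref{thm:independence}'', and you have supplied precisely the missing step --- that looplessness rules out the ``$\lim_s r_s(u)$ exists infinite'' alternative via acyclicity of $f^-$, hence (by the observation in the proof of Lemma~\ref{lem:independence}) also the ``does not exist'' alternative, forcing $u=\omega$ so that Theorem~\ref{thm:independence} gives $A_p=L(\omega)$.
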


\begin{proof}
See the proof of Theorem~\ref{thm:independence}.
\end{proof}

A set $A \subseteq \omega$ is called \emph{p-dialectical} if
$A=A_p$ for some $p$-dialectical system, and we say in this case that $A$ is
\emph{represented} by $p$.

\subsection{Dialectical systems and $q$-dialectical systems}

Dialectical systems and \emph{$q$-dialectical systems} have been extensively
studied in \cite{trial-errorsI} and \cite{trial-errorsII}; the reader is
referred to these papers for both full definitions of them and philosophical
motivations for their study. For our present interests, let us show where the
definition of a $p$-dialectical system is to be modified in order to obtain
these others systems.

\begin{definition}\label{def:reviseddefinition}
A  \emph{dialectical system} is a $p$-dialectical system with no revising
function. That is to say, a dialectical system is a triple $d=\langle H, f,c
\rangle$, in which $H,f,c$ satisfy the same conditions formulated within
Definition \ref{def:$p$-systems}. All the others parameters we have
introduced for $p$-dialectical systems
($A_s,r_s,m(s),\rho_s(x),r_s(x),L_s(x)$, and $\chi_s(i)$) hold the same
meaning for dialectical systems.

\subsection*{Dialectical procedure}
The dialectical procedure is equal verbatim to the $p$-dialectical procedure
for stage $0$, and for any application of Clause $(1)$ of any given stage
$s+1$. Thus the only difference is with Clause $(2)$, which in the case of
dialectical systems has to be modified as follows:

\begin{itemize}
\item[(2)] there exists $k\le m$ such that $c\in
    \chi_{s}(k)$: in this case, let $z$ be the least such $k$, and
    distinguish two cases:
\smallskip

\begin{itemize}
\item[(2.1)] if $c \in H_s(\emptyset)$, then let
    $m(s+1)=0$, and define
\[
r_{s+1}(x)=
\begin{cases} \langle f_0\rangle &\text{if $x=0$} \\
\langle \mbox{  }\rangle &\text{if $x>0$}; \\
\end{cases}
\]
\item[(2.2)]
otherwise, let  $m(s+1)=z+1$, and define
\[
r_{s+1}(x)=
\begin{cases} r_{s}(x) &\text{if $x < z$} \\
\langle f_{z+1}\rangle  &\text{if $x=z+1$} \\
\langle \mbox{  }\rangle &\text{if $x=z$ or $x>z+1$}. \\
\end{cases}
\]
\end{itemize}
\end{itemize}

\end{definition}
We say that a dialectical system with enumeration operator $H$ is
\emph{consistent} if $c\notin H(\emptyset)$. The sets of final theses of
dialectical systems and dialectical sets are defined in a similar way to
$p$-dialectical systems.

Let us then move to $q$-dialectical system. A $q$-dialectical system,
intuitively, incorporates both distinguishing features of dialectical and
$p$-dialectical systems, in the sense that some axiom $f_x$ can be either
discarded, as in the case of a dialectical system, or revised by $f^-$, as in
the case of a dialectical system. Since the formal defintion of a
$q$-dialectical system (that can be found in \cite{trial-errorsI}) for the
most part is identical to that of a $p$-dialectical system, we limit
ourselves to point the differences between the two.

\begin{definition}
A $q$-dialectical system is a quintuple $q= \langle H,f,f^-,c,c^- \rangle$,
such that $ \langle H,f,c \rangle$ is a dialectical system, $f^-$ satisfies
the condition expressed for a $p$-dialectical system, and finally $c^-\in
\omega \smallsetminus \range(f^-)$.

We call $c^-$ the \emph{counterexample}.

\end{definition}

\subsection*{$q$-dialectical procedure}
Stage $0$ of the $q$-dialectical procedure is identical to the same stage of
both the $p$-dialectical and the dialectical procedure. Concerning stage
$s+1$, we have now three different clauses instead of two (the additional one
being introduced since we deal with both $c$ and $c^-$):

\begin{enumerate}
\item there exists no $k \le m$ such that $\{c, c^{-}\} \cap \chi_{s}(k)\ne
    \emptyset$: in this case, let $m(s+1)=m+1$, and define
\[
r_{s+1}(x)=
\begin{cases} r_{s}(x) &\text{if $x \leq m$} \\
\langle f_{m+1} \rangle &\text{if $x = m+1$} \\
\langle \mbox{ } \rangle &\text{if $x > m+1$}; \\
\end{cases}
\]

\item[(2)] there exists $k\le m$ such that $c\in
    \chi_{s}(k)$, and for all $k'<k$, $c^- \notin \chi_{s}(k')$: in this
    case, let $z$ be the least such $k$m and distinguish two cases:
\smallskip

\begin{itemize}
\item[(2.1)] if $c \in H_s(\emptyset)$, then let
    $m(s+1)=0$, and define
\[
r_{s+1}(x)=
\begin{cases} \langle f_0\rangle &\text{if $x=0$} \\
\langle \mbox{  }\rangle &\text{if $x>0$}; \\
\end{cases}
\]
\item[(2.2)]
otherwise, let  $m(s+1)=z+1$, and define
\[
r_{s+1}(x)=
\begin{cases} r_{s}(x) &\text{if $x < z$} \\
\langle f_{z+1}\rangle  &\text{if $x=z+1$} \\
\langle \mbox{  }\rangle &\text{if $x=z$ or $x>z+1$}. \\
\end{cases}
\]
\end{itemize}

\item[(3)] there exists $k\le m$ such that $c^{-}\in \chi_{s}(k)$, and for
    all $k'\le k$, $c \notin \chi_{s}(k')$: in this case, let $z$ be the
    least such $k$, let $m(s+1)=z$,
    %\textcolor{red}{Nota cambio}
    and
    define, where $\rho_{s}(z)=f_y$,
\[
r_{s+1}(x)=
\begin{cases} r_{s}(x) &\text{$x < z$} \\
r_{s}(x)^{\smallfrown} \langle f^{-}(f_{y}) \rangle &\text{$x = z$} \\
\langle \mbox{ } \rangle &\text{$x >  z+1$}. \\
\end{cases}
\]
\end{enumerate}
Finally define
\[
A_{s+1}=\bigcup_{i < m(s+1)}\chi_{s+1}(i).
\]
Thus $A_{s+1}=\emptyset$ if $m(s+1)=0$, and
$A_{s+1}=H_{s+1}(L_{s+1}(m(s+1)))$ otherwise.

As is clear, Clause (1) is almost identical to the
Clause (1) of the $p$-dialectical procedure; Clause (2) is essentially the
same of Clause (2) of the dialectical procedure; Clause (3) is essentially
the same of Clause (2) of the $p$-dialectical procedure.

We say that a $q$-dialectical system, with enumeration operator $H$, is
\emph{consistent} if $\set{c,c^-}\cap H(\emptyset)=\emptyset$. We call $A_s$
the set of \emph{provisional theses} of $q$ with respect to $\alpha$ at stage
$s$. The set $A_q^{\alpha}$ defined as
\[
A_{q}^{\alpha}=\set{ f_x: (\exists t)(\forall s \ge t)[f_{x}\in A_{s}]}
\]
is called the set of \emph{final theses} of  $q$ with respect to $\alpha$. We
often write $A_{s}=A^{\alpha}_{q,s}$ when we want to specify the
$q$--dialectical system $q$ and the chosen approximation to the enumeration
operator. A pair $(q, \alpha)$ as above is called an \emph{approximated
$q$-dialectical system}. A set $A \subseteq \omega$ is called
\emph{$q$-dialectical} if $A=A^{\alpha}_q$ for some approximated
$q$-dialectical system, and we say in this case that $A$ is
\emph{represented} by the pair $(q,\alpha)$.

We summarize some of the main properties of $A_d$ and $A_q^{\alpha}$. As in
the case of $p$-dialectical systems, we say that an approximated
$q$-dialectical system is \emph{loopless} if the set $\{\rho^s(u): s \in
\omega\}$ is finite, for all $u$. For more information and properties about
loopless approximated $q$-dialectical system, and in particular for a
complete characterization of approximated $q$-dialectical system with loops,
the reader is referred to \cite{trial-errorsI}.

\begin{theorem}[\cite{trial-errorsI, Magari:SucerteTeorie}]\label{thm:summary}
If $d$ and $(q, \alpha)$ are respectively a dialectical system or a loopless
approximated $q$-dialectical system then the following hold:
\begin{enumerate}
  \item $A_d$ and $A_q^{\alpha}$ are $\Delta^0_2$ sets;
  \item for every $x$, $\lim_s r_s(x)=r(x)$ and $\lim_s L_s(x)=L(x)$ exist
      finite (whether the functions $r_s(x)$, $L_s(x)$ refer to $d$, or
      $(q,\alpha)$) and
\begin{align*}
  A_d&=\set{f_x: r(x)=\{f_x\}}\\
  A_q^{\alpha}&=\set{f_x: r(x)=\langle f_x \rangle},
\end{align*}
and
\begin{align*}
  f_x \in A_d &\Leftrightarrow c \notin H(L(x) \cup \{f_x\})\\
  f_x \in A_q^{\alpha} &\Leftrightarrow \{c, c^-\} \cap H(L(x)
          \cup \{f_x\})=\emptyset.
\end{align*}
\end{enumerate}
(For $q$-dialectical systems, the values of $r$ and $L$ depend in general on
the chosen approximation $\alpha$).
\end{theorem}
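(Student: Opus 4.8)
The plan is to reduce the theorem to the already-proved Lemma~\ref{lem:independence} and Theorem~\ref{thm:independence} by observing that, for dialectical and loopless $q$-dialectical systems, the stack picture degenerates: every stack $r_s(x)$ has length at most one. First I would verify this degeneration. For a dialectical system, inspect the three clauses of the dialectical procedure: stage $0$ assigns a length-one stack at slot $0$ and the empty string elsewhere; Clause~(1) copies the old stacks for $x\le m$ and assigns $\langle f_{m+1}\rangle$ at the new top; Clause~(2.1) and~(2.2) never append to a stack — they either reset slot $0$ to $\langle f_0\rangle$ or set slot $z+1$ to $\langle f_{z+1}\rangle$ and wipe slot $z$. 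So by induction on $s$, each $r_s(x)$ is either $\langle\ \rangle$ or a singleton $\langle f_y\rangle$, whence $\rho_s(x)$ is just the unique entry and $r_s(x)=\{f_x\}$ at any stage where slot $x$ carries its ``proposed'' value $f_x$. For a loopless $q$-dialectical system, Clause~(3) is the $p$-style clause that does append $f^-(f_y)$; here I would invoke looplessness: if some $r_s(u)$ ever grew to length $\ge 2$, then $\rho_s(u)$ would have taken at least two distinct values (the proposed $f_u$ and its revision), and by acyclicity of $f^-$, once a genuine infinite $f^-$-orbit is entered along slot $u$ the set $\{\rho_s(u):s\in\omega\}$ would be infinite — contradicting looplessness. (This is exactly the characterization of loopless systems recalled from \cite{trial-errorsI}.) Hence again all stacks have length $\le 1$.

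Given that all stacks have length $\le 1$, I would next show that $\lim_s r_s(x)$ exists finite for every $x$. By Lemma~\ref{lem:independence} (applied to the underlying $p$-dialectical data, which the dialectical and $q$-dialectical procedures share in Clauses~(1) and~(3)), the only alternative to ``$\lim_s r_s(x)$ exists finite for all $x$'' is the existence of a least $u$ with $\lim_s r_s(u)$ infinite. But an infinite limit means the stack at slot $u$ grows without bound, contradicting the length-$\le 1$ fact just established. (For the genuine dialectical case one can alternatively argue directly: slot $x$ can change value only when a contradiction is found below or at $x$, and a straightforward induction on $x$ using the finiteness of the orbit of relevant values — or, more simply, the fact that dialectical systems never revise, so slot $x$ is reset only finitely often — gives convergence; but routing through Theorem~\ref{thm:independence} is cleanest.) Consequently $\lim_s r_s(x)=r(x)$ and $\lim_s L_s(x)=L(x)$ exist finite, and $m(s)$ converges to the sup of those $x$ with $r(x)\ne\langle\ \rangle$.

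Now I would pin down which slots survive in the limit. Because all stacks are singletons, we have for large $s$ that $\rho_s(x)=f_x$ exactly when slot $x$ still carries its originally proposed value and has never been wiped or (in the $q$ case) revised; call such $x$ \emph{stable}. Chasing the clauses: slot $x$ is eventually wiped or revised precisely when the contradiction $c$ (resp.\ $c$ or $c^-$) enters $\chi_s(x)=H_s(L_s(x+1))$ cofinitely, i.e.\ precisely when $c\in H(L(x)\cup\{f_x\})$ (resp.\ $\{c,c^-\}\cap H(L(x)\cup\{f_x\})\ne\emptyset$), using $L_s(x)\to L(x)$ and continuity of $H$. This gives the biconditionals
\begin{align*}
 f_x\in A_d &\Leftrightarrow r(x)=\{f_x\}\Leftrightarrow c\notin H(L(x)\cup\{f_x\}),\\
 f_x\in A_q^\alpha &\Leftrightarrow r(x)=\langle f_x\rangle\Leftrightarrow \{c,c^-\}\cap H(L(x)\cup\{f_x\})=\emptyset.
\end{align*}
Finally, since all limits exist finite, Theorem~\ref{thm:independence} applies with $u=\sup\{x: r(x)\ne\langle\ \rangle\}$ (the case $u=\omega$ occurs exactly when every slot is stable): it yields $A_d=H(L(u))=\{f_x: r(x)=\{f_x\}\}$ (and $A_q^\alpha$ likewise), and in particular each such set is a limit of finite sets $A_s$ that change value only when a contradiction is found strictly below the slot in question, so by a standard bounding argument the number of changes at each element is finite and $A_d$, $A_q^\alpha$ are $\Delta^0_2$ (indeed $\omega$-c.e.).

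The main obstacle, and the only place where real work beyond bookkeeping is needed, is the passage from looplessness to ``stacks have length $\le 1$'' in the $q$-dialectical case: one must rule out that a slot enters its $f^-$-orbit and stays active long enough to record infinitely many tops, which is precisely where the acyclicity of $f^-$ and the loopless hypothesis are combined, exactly as in the loop/no-loop dichotomy established in \cite{trial-errorsI}. Everything else is a careful but routine induction over the clauses of the procedure together with an appeal to the already-proved Lemma~\ref{lem:independence} and Theorem~\ref{thm:independence}.
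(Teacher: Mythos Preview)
Your key reduction in the $q$-dialectical case is wrong: looplessness does \emph{not} force stacks to have length at most $1$. Take a $q$-dialectical system with $c^-\in H(\{f_0\})$, $c\notin H(\{f_0\})$, and $\{c,c^-\}\cap H(\{f^-(f_0)\})=\emptyset$. Clause~(3) fires exactly once at slot $0$, the stack stabilises at $r(0)=\langle f_0,f^-(f_0)\rangle$ of length $2$, and $\{\rho_s(0):s\in\omega\}=\{f_0,f^-(f_0)\}$ is finite, so the system is loopless. Your inference ``by acyclicity of $f^-$, once a genuine infinite $f^-$-orbit is entered the set $\{\rho_s(u):s\in\omega\}$ would be infinite'' is a non sequitur: the top of the stack advances along the $f^-$-orbit only when Clause~(3) fires, and nothing forces that to happen infinitely often. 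Looplessness rules out infinite limit stacks, not long finite ones; the whole point of a $q$-dialectical system (as opposed to a dialectical one) is that genuine revision via $f^-$ can occur and leave a nontrivial stack in the limit. A related slip is your parenthetical ``indeed $\omega$-c.e.'': $q$-dialectical sets are in general not $\omega$-c.e.\ (as recalled in the introduction, they populate every level of the Ershov hierarchy), so no uniform change-bounding argument of the kind you sketch can work.

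There is also a structural mismatch in invoking Lemma~\ref{lem:independence} and Theorem~\ref{thm:independence}. Those are proved for the $p$-dialectical procedure, whose only response to deriving $c$ is append-and-revise. The dialectical and $q$-dialectical Clause~(2) instead \emph{wipes} slot $z$ and jumps to $z+1$; this is not an instance of the $p$-dialectical dynamics, so the lemmas do not transfer by restriction to ``length~$\le 1$ stacks''. The paper's own treatment here simply cites the convergence and characterisation results proved separately for each procedure in \cite{Magari:SucerteTeorie} and \cite[Lemmas~3.8, 3.18]{trial-errorsI} (with the $\Delta^0_2$ argument for $A_q^\alpha$ patched in \cite[Lemma~3.4]{trial-errorsII}); a self-contained proof would need an induction tailored to each of the three procedures, not a reduction to the $p$-case.
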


\begin{proof}
The claim that $A_d$ is a $\Delta^0_2$ set comes from
\cite{Magari:SucerteTeorie}. The other claims come from \cite[Lemma~3.8,
Lemma~3.18]{trial-errorsI} (to show that $A^\alpha_q$ is $\Delta^0_2$ see
also the proof of \cite[Lemma~3.4]{trial-errorsII} which amends a previous
bug in \cite{trial-errorsI}).
\end{proof}

Notice that for a $p$-dialectical system, being loopless implies being
consistent.

Most of the results proved for $q$-dialectical sets extend to $p$-dialectical
sets. In particular,

\begin{theorem}\label{thm:summaryp-dialectical}
If $p$ is a loopless $p$-dialectical system then
$\lim_{s} L_{s}(x)$ exists for every $x$ and
\[
f_x \in A_p \Leftrightarrow c \notin H(L(x) \cup \{f_x\}).
\]
\end{theorem}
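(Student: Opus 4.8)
The plan is to follow closely the pattern already used for $q$-dialectical systems in Theorem~\ref{thm:summary}, adapting it to the $p$-dialectical procedure. The first step is to deduce from looplessness that $\lim_s r_s(u)$ exists finite for every $u$. Indeed, if $\lim_s r_s(u)$ existed infinite for some $u$, then from some stage on the top $\rho_s(u)$ would run through the pairwise distinct values $f_u, f^-(f_u), (f^-)^2(f_u), \dots$ (distinctness being exactly acyclicity of $f^-$), contradicting looplessness; and if $\lim_s r_s(u)$ failed to exist, then, as noted in the proof of Lemma~\ref{lem:independence}, some $\lim_s r_s(u')$ with $u'<u$ would be infinite, which we have just excluded. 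Consequently each $L_s(x)$ eventually stabilizes, being determined by the finitely many stabilized stacks $r(y)$ with $y<x$; so $\lim_s L_s(x)=L(x)$ exists for every $x$, and we are in the case $u=\omega$ of Theorem~\ref{thm:independence}. By that theorem and its corollary, $A_p=L(\omega)=\bigcup_v L(v)=\{\rho(y):y\in\omega\}$, where we use that $r(y)\ne\langle\,\rangle$ always because $r(y)_0=f_y$.

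Next I would extract from the proof of Lemma~\ref{lem:independence} the explicit description of the limit stack: $r(x)_0=f_x$, and for each $i$ the bit $r(x)_{i+1}$ is defined (and equals $f^-(r(x)_i)$) precisely when $c\in H(L(x)\cup\{r(x)_i\})$. Since $r(x)$ is finite, applying this to its top entry $\rho(x)$ shows $c\notin H(L(x)\cup\{\rho(x)\})=H(L(x+1))$; running over all $x$ gives the auxiliary fact that $c\notin H(L(u+1))$ for every $u$, and in particular $c\notin H(L(x))$ for every $x$. The direction ($\Leftarrow$) of the characterization is then immediate: if $c\notin H(L(x)\cup\{f_x\})$, the $i=0$ instance of the description forces $r(x)=\langle f_x\rangle$, hence $\rho(x)=f_x\in\{\rho(y):y\in\omega\}=A_p$. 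For ($\Rightarrow$), assume $f_x\in A_p$, so $f_x=\rho(w)$ for some $w$. If $w\ge x$, then $L(x)\subseteq L(w)$ since $L(\cdot)$ is monotone in its index, and since $\rho(w)=f_x$ forces $c\notin H(L(w)\cup\{f_x\})$ by the stabilization fact above, monotonicity of $H$ yields $c\notin H(L(x)\cup\{f_x\})$. If instead $w<x$, then $f_x=\rho(w)\in L(x)$, so $H(L(x)\cup\{f_x\})=H(L(x))$, and $c\notin H(L(x))$ completes the argument.

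The main obstacle is the ($\Rightarrow$) direction, specifically the scenario in which a final thesis $f_x$ joins $A_p$ not through its own stack $r(x)$ but only as the top of a \emph{foreign} stack $r(w)$ with $w\ne x$; handling it is what forces the case split above and the joint use of the monotonicity of $L(\cdot)$ and the fact that each stack $r(w)$ stops growing exactly when its top ceases to derive $c$ over $L(w)$. A secondary, purely bookkeeping point to be careful about is the precise effect of Clause~(2) on the slots just above $z$, so that the invariant relating $r_s(\cdot)\ne\langle\,\rangle$ to $m(s)$ is preserved and $L(u+1)=L(u)\cup\{\rho(u)\}$ holds in the limit; this is routine. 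Everything else parallels the corresponding arguments for dialectical and $q$-dialectical systems.
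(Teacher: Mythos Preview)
Your proposal is correct and follows essentially the same route as the paper, which gives only a two-sentence sketch (``follows from Theorem~\ref{thm:independence}, and an easy induction'') culminating in the displayed equivalence $r(x)=\langle f_x\rangle \Leftrightarrow c\notin H(L(x)\cup\{f_x\})$. Your version is simply a fuller elaboration of that sketch: you make explicit the passage from looplessness to finiteness of each $\lim_s r_s(u)$, invoke the $u=\omega$ case of Theorem~\ref{thm:independence} to get $A_p=L(\omega)=\{\rho(y):y\in\omega\}$, and then handle the ``foreign stack'' possibility $f_x=\rho(w)$ with $w\ne x$ that the paper's shortcut (tacitly identifying $f_x\in A_p$ with $r(x)=\langle f_x\rangle$) leaves implicit.
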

\begin{proof}
The proof follows from Theorem~\ref{thm:independence}, and an easy induction.
Following the last stage at which $L(x)$
ceases to change, we propose $r_s(x)=\langle f_x \rangle$, and it is easy to
see that
\[
r(x)=\langle f_x \rangle \Leftrightarrow c \notin H(L(x) \cup \{f_x\}).
\]
\end{proof}

Notwithstanding the independence of $L$ from the chosen approximation to $H$
established in Lemma \ref{lem:independence} and Theorem
\ref{thm:independence} nothing guarantees that the sequence $\{A_s\}_{s \in
\omega}$ of sets of provisional theses is independent of the approximation,
or does even give a $\Delta^0_2$ approximation to $A_p$. The following lemma
shows however that from any given $H$ one can find an approximation for which
the sequence $\{A_s\}_{s \in \omega}$ is in fact a  $\Delta^0_2$
approximation to $A_p$.

\begin{lemma}\label{lem:goodapproximation}
If $H$ is an algebraic closure operator then from any computable
approximation to $H$ we can effectively find an approximation $\{\hat H_{s}:
s \in \omega\}$ to an enumeration operator $\hat{H}$ such that for every $s$,
the enumeration operator given by $\hat H_{s}$ is an algebraic closure
operator (more precisely it satisfies $X \subseteq \hat H_{s}(X)$ if $\max X
\le s$, and $\hat H_{s}(\hat H_{s}(X)) \subseteq \hat H_{s}(X))$ for all
$X$), and $H$ and $\hat{H}$ coincide as enumeration operators, i.e. for every
$X$, $\hat{H}(X)=H(X)$.
\end{lemma}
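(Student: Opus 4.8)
The plan is to build $\hat H_s$ from a given approximation $\{H_s\}$ to $H$ by "cleaning up" each finite stage so that the two defining inequalities of an algebraic closure operator hold at every stage, while not changing the limit enumeration operator. First I would recall what it means for an enumeration operator to be given by a finite set: $H$ corresponds to a c.e.\ set $W$ of pairs $(D,n)$ (with $D$ a canonical index of a finite set), and $H(X)=\{n: (\exists D\subseteq X)\,(D,n)\in W\}$; an approximation $\{H_s\}$ corresponds to an increasing sequence $\{W_s\}$ of finite such sets with $W=\bigcup_s W_s$. So defining $\hat H_s$ amounts to defining a finite set $\hat W_s$ of pairs, effectively in $s$ and monotone in $s$, with $\bigcup_s \hat W_s = W$.

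The key step is the construction of $\hat W_s$. Starting from $W_s$ (restricted to pairs $(D,n)$ with $\max D\le s$ and $n\le s$, say, so that we only ever talk about a bounded finite piece), I would close off under the two closure requirements: (i) to force $X\subseteq \hat H_s(X)$ whenever $\max X\le s$, simply throw in all pairs $(\{k\},k)$ for $k\le s$; (ii) to force $\hat H_s(\hat H_s(X))\subseteq \hat H_s(X)$, iterate: whenever $(D_1,n_1),\dots,(D_j,n_j)$ are already in $\hat W_s$ and $(\{n_1,\dots,n_j\}\cup E, m)\in \hat W_s$, add $(D_1\cup\dots\cup D_j\cup E, m)$ as well. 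Since at stage $s$ we work inside the finite universe $\{0,\dots,s\}$, this closure process terminates after finitely many steps and is computable, so $\hat W_s$ is a canonical finite set uniformly computable in $s$. Monotonicity $\hat W_s\subseteq \hat W_{s+1}$ follows because the closure operations are monotone in the input and the universe only grows; here one should be slightly careful to phrase the construction so that everything put into $\hat W_s$ (being a consequence, via the closure rules, of pairs in $W$) is also put into $\hat W_{s+1}$.

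Then I would verify the three assertions. That the enumeration operator given by $\hat W_s$ satisfies $X\subseteq \hat H_s(X)$ for $\max X\le s$ is immediate from step (i), and idempotence $\hat H_s(\hat H_s(X))\subseteq \hat H_s(X)$ is exactly what the closure in step (ii) guarantees. Finally, $\hat H=H$ as enumeration operators: the inclusion $W\subseteq \bigcup_s\hat W_s=\hat W$ is built in, giving $H(X)\subseteq\hat H(X)$; for the reverse, every pair added by the closure rules is a "logical consequence" of pairs of $W$ under the inference rules that $H$ itself validates (because $H$ is assumed to be an algebraic closure operator satisfying $X\subseteq H(X)$ and $H(H(X))\subseteq H(X)$), so $(D,n)\in\hat W$ implies $n\in H(D)$, whence $\hat H(X)\subseteq H(X)$ for all $X$. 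The only real obstacle is bookkeeping: making the stagewise closure genuinely finite and monotone, and choosing the bound ($\max X\le s$) so that the weaker stagewise version of the first closure axiom is both achievable at a finite stage and strong enough to yield the full axiom in the limit; none of this is deep, but it must be set up carefully so that the later use of these $\hat H_s$ in running the $p$-dialectical procedure goes through.
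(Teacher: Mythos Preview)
Your proposal is correct and is essentially the same as the paper's proof: the paper first normalizes the given approximation so that each $H_s$ contains only pairs $\langle x,D\rangle$ with $x,\max D<s$ and contains all identity pairs $\langle i,\{i\}\rangle$ for $i<s$, and then sets $\hat H_s=(H_s)^\omega$, where $G\mapsto G^\omega$ is exactly the finite transitive-composition closure you describe in step~(ii). The only cosmetic difference is that the paper packages your explicit closure construction as a black-box operator $G^\omega$ (with the needed monotonicity and finiteness properties quoted from an earlier paper), whereas you spell out the closure rules directly.
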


\begin{proof}
Given any enumeration operator $G$, we can effectively find a closure
operator $G^\omega$ which extends $G$: the details of this construction can
be found for instance in \cite{trial-errorsI}. Moreover if $G \subseteq K$
then $G^\omega \subseteq K^\omega$; $G$ is a closure operator if and only if
(as enumeration operators, not as c.e. sets) $G=G^\omega$; if $G$ is finite
then $G^\omega$ is finite and the canonical index of $G^\omega$ can be
effectively computed from that of $G$. Suppose now that $\{H_{s}: s \in
\omega\}$ be a computable approximation to a closure operator $H$: we may
assume that the approximation satisfies
\begin{enumerate}
\item if $\langle x, D\rangle \in H_s$ then $x, \max D<s$;
\item for every $i<s$, $\langle i, \{i\}\rangle \in H_s$.
\end{enumerate}
For every $s$ define $\hat{H}_{s}=(H_s)^\omega$. By the above remarks, this
is a full-fledged computable approximation to $H^\omega$, still satisfying
(1) and (2). But (as enumeration operators, not as c.e. sets) $H=H^\omega$,
as $H$ is a closure operator. So $\{\hat H_{s}: s \in \omega\}$ is the
desired approximation, effectively found from $\{H_{s}: s \in \omega\}$, to a
suitable closure operator $\hat{H}$ (namely $\hat{H}=H^\omega$) which
coincides as an operator with $H$.
\end{proof}

The next definition summarizes the properties of the approximation built in
the proof of the previous theorem.

\begin{definition}
If $H$ is an algebraic closure operator and $\{H_{s}\}$ is a computable
approximation to it, we say that the approximation is \emph{good} if for
every $s$ the following hold:  $X \subseteq H_{s}(X)$ if $\max X \le s$, and
$H_{s}(H_{s}(X)) \subseteq H_{s}(X))$ for all $X$.
\end{definition}

\begin{corollary}
If $p=\langle H, f, f^-, c\rangle$ is a $p$-dialectical system, and
$\{H_{s}\}$ is a good approximation to $H$ then the corresponding
$p$-dialectical approximation $\{A_{s}: s \in \omega\}$, given by the
$p$-dialectical procedure, is a $\Delta^0_2$ approximation.
\end{corollary}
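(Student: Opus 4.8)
The plan is to prove that for every $y$ the statement ``$y\in A_s$'' holds either for all but finitely many $s$ or for only finitely many $s$, and that in the first case $y\in A_p$ while in the second $y\notin A_p$. Since each $A_s$ is produced from finite data by a uniformly computable procedure, $\{A_s\}$ is then a computable sequence of (canonical indices of) finite sets converging pointwise, hence a $\Delta^0_2$ approximation, and its limit is exactly $A_p$ by the definition of the latter. Throughout I would use the closure features of a good approximation: $H_s$ is monotone in its argument, $H_s\subseteq H_{s+1}$, $\bigcup_sH_s=H$, $X\subseteq H_s(X)$ whenever $\max X\le s$, and $H_s(H_s(X))\subseteq H_s(X)$ for all $X$; in particular $A_s=H_s(L_s(m(s)))$ is ``$H_s$-closed'' once $s$ exceeds the finitely many elements involved. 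I would also invoke Theorem~\ref{thm:independence} and Theorem~\ref{thm:summaryp-dialectical} (the latter in the loopless case, which is the relevant one): $\lim_sL_s(x)=L(x)$ exists for every $x$, $m(s)\to\infty$, $A_p=L(\omega)=H(L(\omega))$, and $c\notin H(L(x))$ for every $x$.

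First I would handle the lower bound. If $y\in A_p=H(L(\omega))$, pick a finite $D\subseteq L(\omega)$ with $y\in H(D)$; then $D\subseteq L_s(m(s))$ and $y\in H_s(D)$ for all large $s$, whence $y\in H_s(L_s(m(s)))=A_s$. So $y\in A_s$ cofinitely.

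Next comes the upper bound, and this is where the work is. Fix $y=f_x\notin A_p$. By Theorem~\ref{thm:summaryp-dialectical}, $c\in H(L(x)\cup\{f_x\})$, so there is a finite $D\subseteq L(\omega)$ with $c\in H(D\cup\{y\})$. Suppose for contradiction that $y\in A_s$ for infinitely many $s$. For such an $s$, taken large enough that $D\subseteq L_s(m(s))$, $\max(D\cup\{c\})\le s$, and $c\in H_s(D\cup\{y\})$, one gets $D\subseteq H_s(L_s(m(s)))=A_s$ together with $y\in A_s$, so $D\cup\{y\}\subseteq A_s$ and therefore $c\in H_s(D\cup\{y\})\subseteq H_s(A_s)=H_s(H_s(L_s(m(s))))\subseteq A_s$. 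Hence $c\in A_s$ for infinitely many large $s$. From $c\in A_s$ I would extract two facts: (a) $c\in\chi_s(m(s)-1)$, so clause~(2) fires at stage $s+1$, revising some slot $z_s$, and by looplessness $z_s\to\infty$, since the slots below any fixed bound are eventually frozen; (b) $\{c\}\subseteq A_s$ gives $H_s(\{c\})\subseteq H_s(A_s)\subseteq A_s$, and since $H_s(\{c\})$ increases to $H(\{c\})=\omega$ this forces every number into $A_s$ for infinitely many $s$, i.e.\ $\limsup_sA_s=\omega$. Against this I would put the fact that, infinitely often, $A_s$ is ``small'': immediately after a revision of slot $z_s$ one has $A_{s+1}=H_{s+1}(L_s(z_s))$ with $L_s(z_s)$ a stabilized piece of $L(\omega)$, so $A_{s+1}\subseteq H(L(\omega))=A_p\ne\omega$. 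The two conclusions are incompatible, contradiction; hence $y\in A_s$ for only finitely many $s$.

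The main obstacle is precisely the last claim of the previous paragraph: controlling the one-stage gap between the clause-selection at stage $s+1$, governed by $\chi_s=H_s\circ L_s$, and the recomputation $A_{s+1}=H_{s+1}(L_{s+1}(m(s+1)))$, which uses $H_{s+1}$. One must argue that on the stabilized background $L_s(z_s)$ nothing new is derived between $H_s$ and $H_{s+1}$, so that $c\notin A_{s+1}$ and more generally $A_{s+1}\subseteq H(L(\omega))$; this is where looplessness (every slot stabilizes, so for large $s$ the least slot carrying $c$ lies above the frozen region) combines with the good-approximation bound that an axiom $\langle u,E\rangle\in H_s$ has $\max E<s$. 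Equivalently, one must rule out that the transient stack-tops $\rho_s(y')$ conspire to keep some $y\notin A_p$ — or $c$ itself — in $A_s$ along a cofinal set of stages. Once this bookkeeping is pinned down, the remaining verifications (computability of $\{A_s\}$, and the closure identities for good approximations) are routine.
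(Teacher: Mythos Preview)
Your argument tracks the paper's up to the crucial point. In the loopless case you correctly argue that $y\in A_p$ gives $y\in A_s$ cofinitely, and for $y=f_u\notin A_p$ you correctly derive, for any sufficiently late $s$ with $y\in A_s$, that $c\in H_s(L_s(m(s)))=A_s$: from $c\in H(L(u)\cup\{y\})$, $L(u)\subseteq L_s(m(s))$, and $y\in A_s$ one gets $L(u)\cup\{y\}\subseteq H_s(L_s(m(s)))$, and then idempotence of the good approximation yields $c\in H_s(L_s(m(s)))$. This is exactly the paper's computation.

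The gap is in what you do next. The paper does not pursue the $\limsup$/``small $A_{s+1}$'' line at all; having obtained $c\in H_s(L_s(m(s)))$, it simply asserts that this ``contradict[s] the definition of $m(s)$'' and concludes that no such late $s$ with $y\in A_s$ exists. Your alternative route does not close, and in fact contains errors beyond the one you flag: (i) the two facts you extract---$H_s(\{c\})\subseteq A_s$ along one cofinal set of stages, and $A_{s+1}\subseteq A_p$ along another---live on different subsequences and are not in conflict, so no contradiction follows; (ii) your claim that after a revision ``$L_s(z_s)$ is a stabilized piece of $L(\omega)$'' is unjustified, since $z_s$ is only the least slot with $c\in\chi_s(z_s)$, and slots just below $z_s$ need not have reached their limit values, so $A_{s+1}=H_{s+1}(L_{s+1}(z_s))$ need not sit inside $H(L(\omega))$. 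You correctly identify the one-stage lag between $\chi_s$ and $\chi_{s+1}$ as the ``main obstacle'', but the paper's proof sidesteps it entirely by ending the argument one move earlier, at the direct clash between $c\in H_s(L_s(m(s)))$ and the procedure's choice of $m(s)$.
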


\begin{proof}
If $p$ is not consistent then the claim follows from the fact that starting
from the stage at which $c \in H(\emptyset)$ we have that $m_{s}(0)=0$ and
thus $A_{s}=\emptyset$.

If $p$ is consistent then we can use Theorem~\ref{thm:summaryp-dialectical}.
Let $f_u=x$, and assume that $x \notin A_p$. Let $t_0$ be a stage such that
$L(u)$ has already reached limit $L(u)$. As $x \notin A_p$, we have that $c
\in H(L(u) \cup \{x\})$: let $t_1\ge t_0$ be such that $L(u) \subseteq
L_{s}(m(s))$ for every $s \ge t_1$ and  $c \in H_{t_1}(L(u) \cup \{x\})$, and
suppose that $s>t_1$ is a stage such that $x \in A_{s}$, i.e.\ $x \in
L_{s}(m(s))$ and $s>  x, \max (L_u)$. It follows that $L(u) \subseteq
H_s(L(u)) \subseteq H_{s}(L_{s}(m(s)))$ and $\{x\} \subseteq
H_{s}(L_{s}(m(s)))$, hence $L(u) \cup \{x\} \subseteq H_{s}(L_{s}(m(s)))$,
hence by goodness of the approximation, $H_s(L(u)\cup \{x\}) \subseteq
H_{s}(L_{s}(m(s)))$, giving that $c \in H_{s}(L_{s}(m(s)))$, contradicting
the definition of $m(s)$.
\end{proof}

%\begin{remark}
%In general, even in the case of a dialectical system (contrary to what is
%claimed in the line preceding Definition~2.4 of \cite{trial-errorsI}: in
%Definition~2.4 one should read $A_d(x)=\liminf_n A_n(x)$) the dialectical
%approximation   need not be a $\Delta^0_2$ approximation.
%\textcolor{red}{Manteniamo la menzione di questo ulteriore bug?}
%\end{remark}

\section{Comparing dialectical sets, $p$-dialectical sets,
and $q$-dialectical sets}

In this section we compare under inclusion the notions of $p$-dialectical
system, dialectical system, and $q$-dialectical system. Throughout the
section we will use superscripts appended to the parameters $L, r, \rho$ etc.
(for instance $L^{p}, L^{d}, L^{q}$ or $r^{p}, r^{d}, r^{q}$) to distinguish
whether the parameters refer to the $p$-dialectical system, or the
dialectical system, or the $q$-dialectical system we will happen to be
talking about.

\begin{theorem}\label{thm:from-d-to-p}
Given any dialectical system $d=\langle H, f,c,\rangle$ such that $H(\emptyset)$
is infinite, we can build a $p$-dialectical system $p$ such that $A_d=A_p$.
\end{theorem}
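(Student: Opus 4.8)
The plan is to simulate the dialectical system $d$ by a $p$-dialectical system $p$ that never actually uses its revising function in a way that changes the limit behaviour — or, more precisely, uses $f^-$ only to mimic the ``discard and move on'' behaviour of the dialectical procedure's Clause (2.2). The key difference between the two procedures is that when a contradiction appears at slot $z$, the dialectical system (in case (2.2)) empties slot $z$, puts $f_{z+1}$ at slot $z+1$, and sets $m(s+1)=z+1$; whereas the $p$-dialectical system pushes $f^-(f_y)$ onto the stack at slot $z$ and sets $m(s+1)=z$. So I want to choose the ingredients of $p$ so that a stack growing at slot $z$ in $p$ corresponds to the sequence of abandoned proposals $f_z, f_{z+1}, f_{z+2}, \dots$ that $d$ would cycle through.

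Concretely, I would keep the same $H$ and $c$, and define $f^-$ and a new proposing function $g$ for $p$ as follows. Using that $f$ is a permutation and $H(\emptyset)$ is infinite, I reindex so that the proposals of $p$ at ``fresh'' slots are the $f$-proposals, and I let $f^-$ send $f_i$ to $f_{i+1}$ for all $i$ — this $f^-$ is visibly acyclic since the orbit of $f_i$ is $\{f_i, f_{i+1}, f_{i+2},\dots\}$, which is infinite. (If one wants $g=f$ literally, that works: the proposing function of $p$ is just $f$, and $f^-(f_i)=f_{i+1}$.) Now compare the two runs stage by stage: as long as no contradiction appears, both procedures extend by one slot with the next $f$-proposal, so $m^d(s)=m^p(s)$ and the stacks agree (each slot holding a single proposal). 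When $c \in \chi_s(z)$ for the least such $z$: the dialectical system goes to case (2.2) — assuming $d$ is consistent, which we may since otherwise $A_d=\emptyset=A_p$ trivially and the infinitude of $H(\emptyset)$ is irrelevant — and the $p$-system pushes $f^-$ of the top of slot $z$. I claim that by induction on stages, if slot $z$ in the $d$-run currently shows $f_j$ (as a singleton stack), then slot $z$ in the $p$-run shows the stack $\langle f_z, f_{z+1}, \dots, f_j\rangle$ with top $f_j$, and for $k<z$ the two runs agree exactly, and for $k>z$ both are empty. The top elements coincide, hence $L_s(k)$ coincides for all $k$, hence $\chi_s(i)$ coincides for all $i$, hence the two procedures take ``the same'' step (modulo slot $z$ being emptied vs.\ pushed-onto, and $m$ being set to $z+1$ vs.\ $z$ — but in the $d$-run $m^d(s+1)=z+1$ with slot $z$ empty contributes nothing to $L$, while in the $p$-run $m^p(s+1)=z$, and $L_{s+1}(z+1)$ in the $d$-run equals $L_{s+1}(z)$ in the $p$-run since slot $z$ is empty in $d$). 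This synchronization is the heart of the argument, and writing the induction hypothesis carefully enough to survive a Clause (1) step immediately following a Clause (2) step (where $d$ puts a proposal at slot $z+1$ on top of an empty slot $z$, and $p$ puts a proposal at slot $z+1$ on top of a nonempty slot $z$) is the main obstacle — one must check that the stack at slot $z$ in the $p$-run stays ``inert'' (its top never changes again once slot $z+1$ becomes active) exactly as in the $d$-run.

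Finally I would extract the limit behaviour. By Theorem~\ref{thm:summary}, $f_x \in A_d \Leftrightarrow c \notin H(L^d(x)\cup\{f_x\})$ and $r^d(x)=\langle f_x\rangle$; by the synchronization, $L^p = L^d$ on every slot that stabilizes, so the greatest $u$ with $L^p(u)$ finite is the same as for $d$, and by Theorem~\ref{thm:summaryp-dialectical} (noting $p$ is loopless because $d$'s run has each slot take finitely many proposal-values, by Theorem~\ref{thm:summary}(2), and $p$'s slot values are a subsequence of those) we get $f_x \in A_p \Leftrightarrow c \notin H(L^p(x)\cup\{f_x\}) = c \notin H(L^d(x)\cup\{f_x\}) \Leftrightarrow f_x \in A_d$. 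Since $f$ is the proposing function of $p$ as well, $A_p = A_d$. The hypothesis that $H(\emptyset)$ is infinite is what guarantees $H(\emptyset)\ne\emptyset$ survives and, more to the point, that the reindexed acyclic $f^-$ can be chosen compatibly with $f$ being a permutation; I would flag exactly where it is used.
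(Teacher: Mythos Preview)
Your proposed synchronization invariant rests on a misreading of the dialectical procedure. In $d$, slot $z$ can only ever hold $\langle f_z\rangle$ or be empty: Clause~(2.2) \emph{empties} slot $z$ and places $f_{z+1}$ at slot $z+1$, it does not put $f_{z+1}$ into slot $z$. So ``slot $z$ in the $d$-run currently shows $f_j$'' forces $j=z$, and ``for $k<z$ the two runs agree exactly'' fails as soon as any earlier $d$-slot is empty while the corresponding $p$-slot carries a nontrivial stack. There is in fact no clean stage-by-stage bijection between the two runs: after each contradiction, $d$ advances one slot while $p$ stays put, so the two procedures drift out of phase. Your choice $f^-(f_i)=f_{i+1}$ \emph{can} be made to work, but only via a limit argument: one shows by induction on $u$ that $\rho^p(u)=f_{w(u)}$, where $w(u)$ is the least $w\ge u$ with $r^d(w)=\langle f_w\rangle$, whence $L^p(\omega)=L^d(\omega)$. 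The hypothesis that $H(\emptyset)$ is infinite is needed precisely here, not for acyclicity of $f^-$ as you suggest: it guarantees that for every $u$ there is some $v\ge u$ with $f_v\in H(\emptyset)$, hence $c\notin H(L^d(v)\cup\{f_v\})$, so $w(u)$ is finite and $p$ is loopless. Without this, your $p$ could have $r^p(u)$ grow to an infinite stack, giving $A_p=\emptyset\ne A_d$.

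The paper takes a quite different and simpler route: it sets $f^-(x)=z_0$ for $x\notin Z$ (and $f^-(z_i)=z_{i+1}$), where $Z=\{z_0<z_1<\cdots\}$ is a computable subset of $H(\emptyset)$. The point is that any element of $H(\emptyset)$ is \emph{inert}: $c\in H(X\cup\{z\})\Leftrightarrow c\in H(X)$ whenever $z\in H(\emptyset)$. Thus each $p$-stack has length at most $2$, one sees directly that $r^d(u)=\langle f_u\rangle$ implies $r^p(u)=\langle f_u\rangle$ while $r^d(u)=\langle\ \rangle$ implies $r^p(u)=\langle f_u,z_0\rangle$, and $\bigcup_u L^p(u)=\bigcup_u L^d(u)\cup\{z_0\}$, which has the same $H$-closure. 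The infinitude of $H(\emptyset)$ is used here only to make $f^-$ acyclic on the $z_i$'s.
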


\begin{proof}
Let $d=\langle H,f,c\rangle$, and being $H(\emptyset)$ an infinite c.e. set,
let $Z=\set{z_0 < z_1< \ldots < z_i< \ldots} \subseteq H(\emptyset)$ be a
computable set. Then, let $p$ be the $p$-dialectical system $p=\langle
H,f,f^-,c\rangle$ where
\[
f^-(x)=
\begin{cases}
z_0 &\ x\notin Z, \\
z_{i+1} &\ x=z_{i}\in Z.
\end{cases}
\]
This definitions obeys the requirement that the orbits of $f^{-}$ be
infinite. Now, we know that $\lim_{s}r_{s}^{d}(u)$ exists for every $u$.
Notice that for every $X$ and for every $z\in H(\emptyset)$, from $H$ being a
closure operator it follows that
\[
c \in H(X\cup \{z\}) \Leftrightarrow c \in H(X).
\]
Using this, it is easy to show by induction on $u$ that
\begin{itemize}
\item  if $r^{d}(u)=\langle f_{u} \rangle$ then $r^{p}(u)=\langle
    f_{u}\rangle$, and if $r^{d}(u)=\langle \mbox{ } \rangle$ then
    $r^{p}(u)=\langle f_{u}, z_{0} \rangle$.
\end{itemize}
It follows that
\[
\bigcup_{u} L^{p}(u)=\bigcup_{u} L^{d}(u)\cup \{z_{0}\}.
\]
$A_{d}=H(\bigcup_{u} L^{p}(u))$ (see \cite{trial-errorsI}, but the proof is
similar to the proof of Theorem~\ref{thm:independence}). On the other hand,
by Theorem~\ref{thm:independence}
\[
A_{p}=H(\bigcup_{u} L^{p}(u))=H(\bigcup_{u} L^{d}(u)
   \cup \{z_{0}\})=H(\bigcup_{u}  L^{d}(u))
\]
because $z_{0}\in H(\emptyset)$ and $H$ is an algebraic closure operator.
\end{proof}

\begin{theorem}\label{thm:from-p-to-q}
Any $p$-dialectical set is a $q$-dialectical set. In fact, given a
$p$-dialectical system $p$ we can effectively build a $q$-dialectical system
$q$ such that $A_p=A_q^\alpha$ for any approximation $\alpha$ to the operator
of $q$.
\end{theorem}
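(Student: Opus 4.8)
The plan is to build $q=\langle H, f, f^-, c, c^-\rangle$ by taking the very same enumeration operator $H$, proposing function $f$, and revising function $f^-$ of the given $p$-dialectical system $p$, and then choosing $c^-$ to be a fresh element that plays no active role. The key observation is that in a $p$-dialectical system the contradiction $c$ triggers a \emph{revision} (Clause~(2) of the $p$-procedure, which appends $f^-(f_y)$ to the stack), whereas in a $q$-dialectical system it is the counterexample $c^-$ that triggers revision (Clause~(3) of the $q$-procedure, which does exactly the same stack operation), while $c$ triggers outright dismissal (Clause~(2) of the $q$-procedure). So the idea is to ``wire'' the $q$-system so that $c^-$ does the work that $c$ did in $p$. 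Since $c^-$ must lie in $\omega\smallsetminus\range(f^-)$, and $\range(f^-)$ is co-infinite (each $f^-$-orbit is infinite, so $f^-$ is far from surjective), we may fix such a $c^-$; we must also be slightly careful that $c^- \ne c$ and that $c^- \notin H(\emptyset)$ — but the latter is automatic when $p$ is consistent, and if $p$ is not loopless/consistent the analysis degenerates, so I would first reduce to the consistent case (recalling that $A_p = \emptyset$ when $p$ is inconsistent, and handling that separately, or simply noting that an inconsistent $p$ can be mimicked by a trivially inconsistent $q$).

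The main point, then, is to define $H$ for the $q$-system so that, along the run of the procedure, it is always $c^-$ (and never $c$, except from the truly inconsistent set $\{c\}$) that appears in the closures $\chi_s(k)$. The cleanest way is: let $q$ use the operator $\hat H$ defined by $\hat H(X) = H(X)$ if $c^- \notin X$, and $\hat H(X) = \omega$ if $c^- \in X$ — but we also need to \emph{insert} $c^-$ wherever $H$ would have produced $c$. Concretely I would set $\hat H(X) = H(X) \cup \{c^-\}$ whenever $c \in H(X)$ (and $X \ne \{c\}$-like configurations), i.e. replace every occurrence of $c$ that $H$ derives from a consistent-looking set by $c^-$ instead. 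One checks this $\hat H$ is still an algebraic closure operator (adding the single marker $c^-$ to outputs that contain $c$ is idempotent and monotone, and $\hat H(\emptyset) \ne \emptyset$, $\hat H(\{c\}) = \omega$ can be arranged). With this $\hat H$, Clause~(2) of the $q$-procedure (dismissal on $c$) is never invoked from a consistent configuration, Clause~(3) (revision on $c^-$) is invoked in exactly the situations where Clause~(2) of the $p$-procedure was invoked, and Clause~(1) matches Clause~(1) of the $p$-procedure since $\{c,c^-\}\cap\chi_s(k)=\emptyset$ iff $c^- \notin \chi_s(k)$ iff $c \notin H(\cdots)$ iff the $p$-side Clause~(1) condition holds.

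From here the proof is a routine induction, matching stages of the $q$-procedure for $\langle \hat H, f, f^-, c, c^-\rangle$ against stages of the $p$-procedure for $\langle H, f, f^-, c\rangle$: the stacks $r_s^q(x)$ and $r_s^p(x)$, the values $m(s)$, and hence the provisional theses $A_s$ agree at every stage (modulo the harmless presence of $c^-$ inside the closures, which never lies in $L_s(i)$ since $c^- \notin \range(f^-)$ and $c^-$ is never proposed by $f$, as $f$ is a permutation we can assume avoids the marker — or we simply note $c^- = f_j$ for some $j$ and observe $f_j$ never stabilizes because the relevant stack always gets reset). Consequently $A_p = A_q^\alpha$ for every approximation $\alpha$, by Theorem~\ref{thm:independence} together with the analogous independence for loopless $q$-systems from Theorem~\ref{thm:summary}. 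The one genuine obstacle is the bookkeeping around $c^-$: ensuring it is simultaneously (i) outside $\range(f^-)$, (ii) never an honest member of any $L_s(i)$, and (iii) never itself derived except when we deliberately inject it — this is where one must be a little careful in the definition of $\hat H$ and in the choice of $c^-$, but none of it is deep, and the orbit-infiniteness of $f^-$ gives enough room to pick $c^-$ cleanly.
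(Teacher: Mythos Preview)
Your proposal contains a genuine gap in the analysis of the $q$-dialectical clause priorities. You keep $c$ as the $q$-system's contradiction and try to inject a fresh $c^-$ into $\hat H(X)$ whenever $c\in H(X)$, hoping this makes the revision clause~(3) fire. But recall that clause~(2) applies when $c\in\chi_s(k)$ and $c^-\notin\chi_s(k')$ for all $k'<k$ (strict inequality), whereas clause~(3) requires $c\notin\chi_s(k')$ for all $k'\le k$ (non-strict). Hence if $c$ and $c^-$ both first appear at the same level $k$, it is clause~(2) that fires, and the $q$-procedure \emph{dismisses} rather than revises. Your simulation therefore diverges from the $p$-procedure at the very first revision step. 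Worse, your modification is vacuous: since $H$ is a closure operator with $H(\{c\})=\omega$, we have $c\in H(X)\Rightarrow H(X)=\omega$, so your formula $\hat H(X)=H(X)\cup\{c^-\}$ already equals $H(X)$; nothing has changed, and $c$ is still present. To make clause~(3) fire you would have to \emph{remove} $c$ from $\hat H(X)$ while keeping $c^-$, but this is incompatible with the requirement $\hat H(\{c\})=\omega$ together with idempotence once $c^-\in\hat H(X)$ forces $\hat H(\hat H(X))=\omega$.

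The paper avoids this by reversing the roles: it sets $c^-:=c$, so that the old contradiction becomes the $q$-counterexample and thereby triggers exactly the revision clause~(3). A new contradiction $z_0$ is then chosen to be some $f_{u_0}\ne c$ with $f_{u_0}\notin A_p$, and $H$ is modified to $H^*$ so that $z_0$ is derivable only from sets already containing $z_0$; this guarantees clause~(2) never fires along the run. The proposing function is shifted by one to place the safe value $z_1=\rho_p(u_0)$ at slot~$0$. The trivial cases ($A_p=\omega\smallsetminus\{c\}$, or $p$ has loops) are handled separately. The verification is then an induction showing $r^q(u)$ tracks $r^p(u-1)$, but the delicate point---ensuring $H^*$ remains an algebraic closure operator and that the dismissal clause is genuinely suppressed---is precisely what your sketch does not address.
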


\begin{proof}
Let $p=\langle H,f,f^-,c\rangle$. We first observe that the claim is trivial
if $A_p=\omega\smallsetminus \{c\}$, and if $p$ has loops.

If not, let $u_0$ be the least number such that $z_0=f_{u_0} \neq c$ and $z_0
\notin A_p$, and denote $\rho_p(u_0)$ with $z_1$. Consider the
$q$-dialectical system $q=\langle H^*,f^*,f^-,z_0,c^-\rangle$, where $c^-=c$,
$f^*$ is defined as follows
\[
f^*(x)=\begin{cases} z_1 &\text{ if $x=0$},\\
f(x-1) &\text{ if $x>0$},
\end{cases}
\]
and \[
H^{*}=(H\smallsetminus \set{\langle z_0, D\rangle
\mbox{ $:$ } z_0\notin D}) \cup \{\langle x, \set{z_0}\rangle: x \in \omega\}.
\]
Notice that for every set $X$, if $z_0 \in H^*(X)$ then $z_0 \in X$.

We now show that $H^{*}$ is an algebraic closure operator.
\begin{itemize}
\item We first show that $X\subseteq H^{*}(X)$ for every set $X$.  Let $X$
    be given. If $z_{0} \in X$, we have that $X \subseteq \omega \subseteq
    H^{*}(X)$. If $z_{0}\notin X$ and $x \in X$ then (as $H$ is an
    algebraic closure operator) there  is an axiom $\langle x, D\rangle \in
    H$ with $D\subseteq X$, but then then $\langle x, D\rangle \in H^{*}$
    as well and thus $x \in H^{*}(X)$.

\item Next we show that $H^{*}(H^{*}(X))\subseteq  H^{*}(X) $.   Let $X$ be
    given, and assume that $x \in H^{*}(H^{*}(X))$. We  may also assume
    that $z_{0} \notin H^{*}(H^{*}(X))  \cup H^{*}(X) \cup X$, otherwise in
    any case $z_{0}\in X$ by definition of $H^{*}$ and thus
    $H^{*}(H^{*}(X))\subseteq \omega \subseteq H^{*}(X)$.

    So assume that $x \ne z_{0}$ and let $\langle x, D\rangle \in H^{*}$ be
an axiom with $D \subseteq H^{*}(X)$: to this axiom by our assumptions
(which imply $z_{0} \notin D$) must correspond an axiom $\langle x, D
\rangle \in H$. For every $y \in D$ there is an axiom $\langle y,
E_{y}\rangle \in H^{*}$ with $E_{e} \subseteq X$ and by our assumptions
again,  each such axiom must correspond to an axiom $\langle y,
E_{y}\rangle \in H$. We thus obtain $x \in H(H(X))$, and since $H$ is an
algebraic closure operator, this gives $x\in H(X)$ via an axiom, say,
$\langle x, E\rangle \in H$: but this is also an axiom of $H^{*}$, thus $x
\in H^{*}(X)$.
\end{itemize}

Let us now work with any approximation $\alpha$ to $H^*$. We want now to
prove that $A_p=A_q^\alpha$. In particular, we show by induction on $u$ that,
for all $u$, we have that
\[
r^q(u)=
\begin{cases}
\langle z_1 \rangle, &\text{if $u=0$},\\
r^p(u-1), &\text{if $u>0$ and $z_0 \notin \range(r^p(u-1))$},\\
\in \{\langle \mbox{ }\rangle, r^p(u-1)\}, &\text{otherwise},
\end{cases}
\]
where the third clause means that $r^q(u)=\langle \mbox{ }\rangle$ or
$r^q(u)= r^p(u-1)$ depending on which one between $z_0$ and $c$ appears
first, enumerated in $H^*(L^q(u)\cup \{z_0\})$, at the relevant stage of the
$q$-dialectical procedure. Moreover, we show by induction on $u>0$ that
\[
r^q(u)= \langle \mbox{ }\rangle \Rightarrow \rho_p(u-1)=z_1,
\]
so that $L^q(u)=L^p(u-1)\cup \{z_1\}$.

Since $f^*_0=z_1$, it is immediate to notice that that $r^q(0)=z_1$. Indeed,
we can not have $z_{0} \in H^{*}(\{z_{1}\})$ by definition of $H^{*}$, but we
cannot have $c \in H^*(\set{z_1})$ either, otherwise $c \in H(\set{z_1})$
against the fact that $z_1 \in A_p$.

Then  consider the case $u>0$, and assume by induction that
$L^q(u)=L^p(u-1)$. It is easy to see that if $z_0 \in \range(r^p(u-1))$ then
$\rho^p (u-1)=z_1$. Suppose that $r^{p}(u-1)$ has length $n$: we claim that
for every $i<n$, $(r^{q}(u))_{i}=(r^{p}(u-1))_{i}$, and $\rho_{q}(u)
=\rho_{p}(u-1)$. This is clearly true when $i=0$ by definition of $f^{*}$.
Assume the claim is true of $i<n-1$. If $(r^{p}(u-1))_{i}=(r^{q}(u))_{i}\ne
z_0$, then (as $z_0 \notin L^q(u)\cup \{(r^{q}(u))_{i}\}$ by induction), we
have that $z_0 \notin H^*(L^q(u)\cup \{(r^{q}(u))_{i}\})$; but (since
$i<n-1$) $c \in H(L^p(u-1)\cup \{(r^{p}(u-1))_{i}\})$, thus $c \in
H^*(L^q(u)\cup \{(r^{q}(u))_{i}\})$ (by the way $H^*$ is defined), hence
\[
r^{q}(u)_{i+1}=f^{-}((r^{q}(u)))_{i}=f^{-}((r^p{u-1})_{i})=r^{p}(u-1)_{i+1}.
\]
On the other hand, when we reach the top, $c \notin H(L^{p}(u-1)\cup
\{\rho_{p}(u-1)\}\cup \{z_{1}\})$, and thus again $\{z_0, c\} \cap H^*(L^q(u)
\cup \{\rho_{p}(u-1)\})= \emptyset$, giving that $\rho_{q}(u)=\rho_{p}(u-1)$.

Let us consider now the case $(r^{p}(u-1))_{i}=(r^{q}(u))_{i}= z_0$. Now both
$\{z_0,c\}\subseteq H^*(L^q(u)\cup \{(r^{q}(u))_{i}\})$. If at the relevant
stage of the $q$-dialectical procedure, $\alpha$ shows $z_{0}$ derivable from
$H^*(L^q(u)\cup \{(r^{q}(u))_{i}\})$ no later than $c$ is so derivable, then
$r^{q}(u)=\langle \mbox{ }\rangle$ and $\rho_p(u-1)=z_1$; if $\alpha$ shows
$c$ derivable first, then by an argument similar to the one for the case when
$(r^p(u-1))_i\ne z_0$, we conclude that $r^{q}(u)_{i+1}=r^{p}(u-1)_{i+1}$.
Since $f^-$ is not cyclic, we now have that $z_0 \ne (r^p(u-1))_j$ for all $i
<j<n$, thus again as in the case seen above when $(r^p(u-1))_i\ne z_0$, we
conclude that for all $i<j<n$, $r^{q}(u)_{j}=r^{p}(u-1)_{j}$, and eventually
$\rho_{q}(u)=\rho_{p}(u-1)$.

It follows that $L^p(\omega)=L^q(\omega)$, and thus $A_p=A_q^\alpha$ for
every approximation $\alpha$ to $H^*$.
\end{proof}

The next problem is left open.

\begin{problem}
Are there $q$-dialectical sets that are not $p$-dialectical?
\end{problem}

\section{A brief comparison with other approaches to trial and error
mathematics}\label{sct:comparison}

Having set the formal definitions of the three systems (dialectical,
$q$-dialectical, and $p$-dialectical systems) and laid down the theoretical
bases, before moving to a detailed investigation of the computability
theoretic properties of the sets they represent, including certain
completions of formal theories, it is perhaps time to pause and briefly
compare these systems with other popular models of trial and error
mathematics.

\subsection{Belief revision}

The central problems facing the theory of belief revision are how to revise a
knowledge system in the light of new information that  turns out to be
inconsistent with the old one. The AGM axiomatic theory~\cite{AGM} is the
most famous theory of belief revision: in this model, beliefs are represented
as sentences held by an agent. Such sentences form a deductive closed set: a
\emph{belief set}. To formalize how agents revise their beliefs, AGM
describes various actions by which  a belief set can be modified in response
to new information. If this new information does not contradict the set of
acquired knowledge, it is simply added to the belief set and we have the
\textit{expansion}. On the contrary, \textit{revision} takes place when a new
sentence turns out to be inconsistent with the belief set to which it is
added. In order to maintain consistency, some of the old sentences are
deleted by an action called \textit{contraction}. What is kept of the old
beliefs is the consequence of some guiding rules. Two \emph{dogmas}, in
particular, have been singled out (see \cite{Rott} for more details): first,
one's prior beliefs should be changed as little as possible; second, whenever
there is a choice about which sentence should be deleted, the agent should
abandon the least one with respect to some ordering of \textit{epistemic
entrenchment}, where ``$q$ is more entrenched than $p$'' intuitively means
that the sentence $q$ has more epistemic value than the sentence $p$. So, the
overall goal of these dogmas is to keep the loss of information minimal when
a belief set is updated.

Dialectical systems, and the variations considered in this paper, aim at
modeling similar actions, but they implement them in a rather different way.
In this context, expansion is not limited to the addition of a new sentence
(or axiom, in our terminology)  but it consists also in increasing the
deductive power of the deduction operator $H$ (whereas in AGM each action
leads to an already deductively closed set of beliefs).

More importantly, the dialectical model lacks an explicit entrenchment
ordering: when a conflict emerges, i.e., $c$ or $c^-$ is derived, we
reject/revise the last proposed axiom of the minimal inconsistent set,
instead of evaluating the epistemic value of the axioms contained in it.
Nevertheless, the behavior of the proposing function $f$ and that of the
revising function $f^-$ to some extent surrogate the entrenchment: $f$
encodes a certain priority to the axioms to be proposed, and $f^-$ (in the
case of $p$- and $q$- dialectical systems) can dynamically change this
priority by swapping the ordering of two given axioms and thus modifying
their mutual priority. One might go further and develop a dialectical model
where to each axiom is assigned a certain weight: whenever a conflict arises,
the system keeps as provisional theses the consistent subset $X$ of the old
knowledge that realizes the maximum weight. A similar line of research has
been explored in \cite{MSS}, where the authors investigate generalized
dialectical systems embodied with probability weights. Yet, also this
approach differs from the AGM proposal, since entrenchment is more concerned
with  the \textit{explanatory power} of the sentences. In G\"ardenfors' and
Makinson's words~\cite{Makinson}:

\begin{quotation}
Rather than being connected with probability, the epistemic entrenchment of a
sentence is tied to its explanatory power and its overall informational value
within the belief set. For example, lawlike sentences generally have greater
epistemic entrenchment than accidental generalizations. This is not because
lawlike sentences are better supported by the available evidence (normally
they are not) but because giving up lawlike sentences means that the theory
loses more of its explanatory power than giving up accidental
generalizations.
\end{quotation}

Studying dialectical systems that incorporate some measures of explanatory
power (as the ones discussed for instance in \cite{explanatory}) is a topic
for future work.

\subsection{Lakatos' philosophy of mathematics}

It would be incorrect to assert that dialectical systems attempt to formalize
Lakatos' philosophy of mathematics: the dialectical model is way too abstract
to offer a convincing rendering of the dynamic of  mathematical discovery
characterized, e.g., in \cite{Lakatos:Proofs}. Yet, Lakatos' intuition that
mathematical knowledge is subject to constant refinement motivates Magari's
original proposal. Indeed, according to Magari~\cite{Magari:SucerteTeorie}, a
dialectical system is best understood as modeling a mathematician (or even, a
mathematical community) that in developing a mathematical theory proceeds by
trial and errors, instead of merely accumulating more and more deductions (as
classical formal systems prescribe).  Moreover, the main conceptual reason for
moving from dialectical to $q$-dialectical systems in \cite{trial-errorsI} was
precisely that of including in our systems a revision mechanism more adherent
to that of mathematical practice, rather than just limiting ourselves to
logical contradiction.

To sketch a more precise parallel between our systems and Lakatos' approach,
it is worth to briefly contrast the dialectical model with the way in which
Lakatos' theory has been computationally represented: in \cite{Peaseetal},
the authors make use of abstract argumentation systems (in Dung-style, see
\cite{dung1995acceptability}) to offer an automated realization of Lakatos'
view. In the field of structured argumentation (the interested reader is
referred to \cite{besnard2014introduction}), an \textit{abstract
argumentation framework} is a directed graph, where the nodes are
\textit{arguments} and the arcs are \textit{attacks}, and a set of arguments
is conflict-free if no pair of argument belongs to the set of attacks. An
\textit{argument} \textit{system} is then given by a logical language, a set
of rules (that can be either \textit{strict} or \textit{defeasible}), and a
partial function from rules to formulas. In a nutshell, Lakatos' account is
represented in \cite{Peaseetal} as a \textit{formal dialogue game} between a
Proponent and an Opponent (roles that are possibly embodied by many speakers)
and proofs are carefully represented as arguments that correspond to the
artifacts collaboratively created by the participants in a Lakatosian
dialogue, such as the one  famously exemplified by the classroom  debate
about Euler's conjecture on polyhedra in \cite{Lakatos:Proofs}. This dialogue
game is a rather complex game, in which players can perform different types
of moves (such as raising counterexamples, piecemeal exclusion, monster
barring, monster adjusting, etc.), corresponding to crucial ingredients of
Lakatos' informal logic.

The dialectical model is of course way less adherent to Lakatos' perspective.
A game-theoretic formulation of it can
however be readily obtained: the Proponent makes a proposal via the function
$f$ and, and at each step of the computation, the
Opponent tries to reject by either proving its inconsistency or its
implausibility with acquired knowledge. So, the game can be roughly intended
as a debate between the Proponent and
the Opponent about whether any given sentence is to be accepted or not.
However, such a game is much more rigid than
the one formulated in \cite{Peaseetal}. For instance, unlike Lakatos' game
where the roles are
interchangeable, in our models  the Opponent always attacks
and the Proponent always proposes new hypotheses. Another major difference is
that Lakatos' game does not contain
strict rules (i.e, rules of the form ``$B$ is is always a consequence of
$A$''), but only
defeasible rules (i.e, rules of the form ``typically $B$ is a consequence of
$A$''). On the contrary, no defeasible reasoning is allowed in the dialectical
game: in fact, an argument can be attacked only by showing some
undesirable \textit{deductive} consequences, and this depends only on the set
of premises and the deduction operator.

Finally, the strategy of the Proponent and the Opponent are completely
deterministic, being defined  once for all at the beginning of the computation
and eventually producing a unique set of final theses (modulo the
approximation to $H$ in the case of the $q$-dialectical systems). This is why
our analysis is centered on the class of sets represented by the $p$- or $q$-
dialectical systems,  rather than focusing on the behavior of a particular
system.

\subsection{Algorithmic learning theory}

Algorithmic learning theory (ALT) is a vast research program, initiated by
Gold~\cite{gold1967language} and Putnam~\cite{putnam1965trial} in
the 60s that comprises different models of learning in the limit. It deals
with the
question of how a \emph{learner}, provided with more and more data about some
\emph{environment},
is eventually able to achieve systematic knowledge about it. For instance, a
classic paradigm in ALT concerns the learning of total computable functions:
the learner receives as input the stream of values of a function $g$ to be
learned and, at any stage, outputs a conjecture of a program that computes the
function. The learning is successful if the learner eventually infer a correct
program for $g$. Different formalizations of this and similar intuitions gave
rise to a vast research area (for an introduction to the field see for
instance \cite{Jain:Book}).

In analogy with the learning criteria explored in ALT, a dialectical system
also embeds a stabilization process, by which we eventually converge to a set
of final theses (and in fact, by Theorem \ref{thm:summary} and Theorem
\ref{thm:summaryp-dialectical} we have that, if a set is represented by our
system, then it is computable in the limit, i.e., $\Delta^0_2$). More
importantly, the existence of a similar stabilization mechanism hints at a
deeper similarity between the two models: they both display and manage
information essentially \emph{by stages}, in a way that is naturally apt to
be analyzed by computable theoretic tools. The significance of this common
trait is well described by the following remark of Van Benthem in
\cite{VANBENTHEM}:

\begin{quotation}
Perhaps the key activity tied up with theory change is learning, whether by
individuals or whole communities. Modern learning theory (...) describes
learning procedures over time, as an account of scientific methods in the
face of steadily growing evidence, including surprises contradicting one's
current conjecture. In this perspective, update, revision, and contraction
are single steps in a larger process, whose temporal structure needs to be
brought out explicitly (...). Learning theory is itself a child of recursion
theory, and hence it is one more illustration of a computational influence
entering philosophy.
\end{quotation}

Dialectical systems, and our related models, are children of recursion theory
as well. They do not offer a \emph{logic} of trial and error mathematics, nor
do they aim at spelling out a variety of principles by which we might want to
change or preserve a given axiom. This can be seen as a limitation of
dialectical systems. But note that no logic of learning (or of inductive
inference) is provided in ALT, and no axiomatization of computability is
contained in Turing's 1936 paper \cite{Turing1936}. This is because the
emphasis of a computable theoretic investigation (such as the present one) is
typically more process-oriented and focuses on exploring the computational
costs of such processes. Dialectical, $p$-dialectical, and $q$-dialectical
systems are attempts at characterizing the evolution of abstract mathematical
theories by defining highly idealized agents that follow few mechanic rules
-- by which, nonetheless, a rich class of theories can be produced. One might
insist that such an idealization is too extreme; in fact, in this section we
offered enough evidence that other frameworks might give a better
understanding of, e.g., what belief change is. Yet, a measure of the
fruitfulness of a given idealization also comes from whether it sheds new
light on some well-established notion. The goal of the second half of this
paper is to show that, for the dialectical model, this is exactly the case:
our systems turn out to be a remarkably good machinery for dealing with a
key-concept of classical logic, i.e., completions of first-order theories.

\section{Systems with connectives and completions}
By a \emph{system} we will mean in general a $p$-dialectical system or a
dialectical system, or a $q$-dialectical system. From now on we will restrict attention
to systems in which, via identification of numbers with the sentences of some
formal language, $H$ is ragarded as a logical deduction operator, i.e. $H(X)$
is the set of sentences which can be logically derived from the premises $X$.
In this identification sentential connectives can be viewed as just
computable functions.

The following definition is taken from \cite{Magari:SucerteTeorie}.

\begin{definition}\label{def:withconnectives}
A \emph{system with connectives}  is a system with
an enumeration operator $H$, a contradiction $c$, and injective computable
functions $\neg,
\rightarrow, \wedge, \lor$ such that
for every $X\subseteq \omega$ and $x,y \in \omega$,
\begin{enumerate}
\item $c \in H(\{x, \neg x\})$;
\item $H(\{\neg \neg x\})=H(\{x\})$;
\item $x \lor \neg x \in H(\emptyset)$;
\item $H(X\cup \{x \lor y\})=H(X\cup \{x\})\cap H(X\cup \{y\})$;
\item if $c\in H(X\cup \{x\})$ then $\neg x \in H(X)$;
\item $x \in H(X\cup \{y\})$ if and only if $y \rightarrow x \in H(X)$.
\end{enumerate}
\end{definition}

\begin{definition}
Given a system with connectives and finale theses $A$, we say that the system
is a \emph{completion}, if for every  $x$, $A\cap \{x, \neg x\}$ has exactly
one element.
\end{definition}

\subsection{$q$-dialectical completions}
It is known from \cite{trial-errorsII} that there are (loopless)
$q$-dialectical sets that are not dialectical. Unfortunately if we consider
connectives, nothing is gained from passing from dialectical systems to
$q$-dialectical systems.

We first show that if a loopless $q$-dialectical system with connective is
consistent (i.e. $\set{c,c^-} \cap H(\emptyset)=\emptyset$, where $H$ is the
operator of $q$), then $A_{q}$ is a completion.

\begin{theorem}\label{thm:q-implies-completion}
If $q=\langle H, f,f^-,c,c^{-}\rangle$ is a consistent loopless $q$-dialectical
system with connectives, $\alpha$ an approximation to $H$ such that
$(q,\alpha)$ is loopless, then $A_{q}^{\alpha}$ is a completion.
\end{theorem}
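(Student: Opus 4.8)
The plan is to show two things for each $x$: (a) at least one of $x$, $\neg x$ ends up in $A_q^\alpha$, and (b) not both do. The key tool is Theorem~\ref{thm:summary}, which for a loopless approximated $q$-dialectical system gives, for every $x$, the limit values $r(x)$ and $L(x)$ exist finite, and
\[
f_x \in A_q^\alpha \iff \{c,c^-\}\cap H(L(x)\cup\{f_x\})=\emptyset.
\]
Combined with the identity $A_q^\alpha = H(L(\omega))$ (where $L(\omega)=\bigcup_v L(v)$; this is the analogue for $q$-systems of Theorem~\ref{thm:independence}, available from \cite{trial-errorsI}), the task reduces to a statement about the deductively closed, consistent set $T:=H(L(\omega))$ in the presence of the connective axioms of Definition~\ref{def:withconnectives}.

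**Step 1 (consistency of $T$, and $c,c^-\notin T$).** Since the system is loopless and consistent, $m(s)>0$ cofinitely often and $L(\omega)=\bigcup_u L(u)$ is a well-defined set with $c\notin H(L(\omega))$ and $c^-\notin H(L(\omega))$ — otherwise some finite $L(u)$ would already derive $c$ or $c^-$, contradicting that each $r(u)$ reaches a finite limit with $\rho(u)$ surviving. So $T=H(L(\omega))$ is consistent: for no $x$ are both $x$ and $\neg x$ in $T$, because $x,\neg x\in T$ gives $c\in H(\{x,\neg x\})\subseteq H(T)=T$ by Definition~\ref{def:withconnectives}(1) and idempotence of $H$. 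This establishes (b) at the level of $T$.

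**Step 2 (completeness of $T$ — the main point).** Fix $x$. By Definition~\ref{def:withconnectives}(3), $x\lor\neg x\in H(\emptyset)\subseteq T$. By Definition~\ref{def:withconnectives}(4), $H(T\cup\{x\lor\neg x\}) = H(T\cup\{x\})\cap H(T\cup\{\neg x\})$; since $x\lor\neg x\in T$, the left side is just $T$. So $T = H(T\cup\{x\})\cap H(T\cup\{\neg x\})$. Now I claim at least one of $x$, $\neg x$ is in $T$: if $x\notin T$ then $c\in H(T\cup\{x\})$ is impossible to rule out directly, so instead argue as follows. Suppose $x\notin T$ and $\neg x\notin T$. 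Then $H(T\cup\{x\})\supsetneq T$ and $H(T\cup\{\neg x\})\supsetneq T$, yet their intersection is $T$. Pick $a\in H(T\cup\{x\})\setminus T$. Then $a\notin H(T\cup\{\neg x\})$, so by Definition~\ref{def:withconnectives}(6) $\neg x\rightarrow a\notin T$; but also $x\rightarrow a\in T$ by (6) applied to $a\in H(T\cup\{x\})$. Here I would use the disjunction property encoded by (4) once more, now on $\neg x\rightarrow a$: from $x\lor\neg x\in T$ and (4), $a = (x\rightarrow a)\wedge(\neg x\rightarrow a)$-style reasoning — more carefully, $H(T\cup\{x\lor\neg x\})\subseteq H(T\cup\{x\})$ forces, via (6), that anything derivable from $T$ together with $x\rightarrow a$ and with $\neg x\rightarrow a$ is already in $T$; taking that "anything" to be $a$ itself (which follows from $x\rightarrow a, \neg x\rightarrow a$ and $x\lor\neg x$ by a short derivation using (6) and (4)) gives $a\in T$, a contradiction. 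Hence $x\in T$ or $\neg x\in T$, i.e. $A\cap\{x,\neg x\}$ has at least one element.

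**Step 3 (transfer from $T$ to the official $A_q^\alpha$).** Finally I would check $A_q^\alpha$ genuinely equals the membership predicate "$y\in T$" on the range of $f$ — i.e. $f_x\in A_q^\alpha \iff f_x\in H(L(\omega))$. One inclusion is the "$L(\omega)\subseteq A_q^\alpha$ and $A_q^\alpha$ closed" part of the $q$-analogue of Theorem~\ref{thm:independence}; the other uses the characterization $f_x\in A_q^\alpha\iff\{c,c^-\}\cap H(L(x)\cup\{f_x\})=\emptyset$ from Theorem~\ref{thm:summary} together with the fact that $L(x)\subseteq L(\omega)$ and $f_x\in H(L(\omega))$ whenever $f_x$ is consistent over $L(\omega)$ — so the two conditions coincide. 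Since $f$ is a permutation of $\omega$, every sentence is some $f_x$, and Steps 1–2 then give that $A_q^\alpha$ is a completion.

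**Expected main obstacle.** The delicate point is Step 2: squeezing genuine completeness out of the rather weak connective axioms (3), (4), (6) without a full Lindenbaum-style argument. The intersection identity $T = H(T\cup\{x\})\cap H(T\cup\{\neg x\})$ is exactly the algebraic shadow of "$T$ decides $x$," and the work is to turn it into membership of $x$ or $\neg x$ in $T$ using only the deduction-theorem axiom (6) and the disjunction-elimination axiom (4) — essentially reconstructing the proof-by-cases rule and observing that a deductively closed consistent set closed under it must be complete.
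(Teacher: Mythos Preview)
Your Step~2 contains a genuine gap, and in fact the statement you are trying to prove there is false. You attempt to show that \emph{any} $H$-closed set $T$ with $c,c^-\notin T$ is complete, using only the connective axioms of Definition~\ref{def:withconnectives}. But $T=H(\emptyset)$ itself is such a set (since the system is consistent) and it is certainly not complete. Concretely, your argument derives $\neg x\rightarrow a\notin T$ and then, a few lines later, uses $\neg x\rightarrow a$ as an available premise (together with $x\rightarrow a$ and $x\lor\neg x$) to conclude $a\in T$; the derivation $a\in H(\{x\rightarrow a,\ \neg x\rightarrow a,\ x\lor\neg x\})$ is correct, but since $\neg x\rightarrow a\notin T$ this only yields $a\in H(T\cup\{\neg x\rightarrow a\})$, not $a\in T$. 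No contradiction follows, and none can: the intersection identity $T=H(T\cup\{x\})\cap H(T\cup\{\neg x\})$ holds for every $H$-closed $T$ and simply does not force $x\in T$ or $\neg x\in T$.

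What is missing is exactly the input from the $q$-dialectical procedure that you defer to Step~3. Completeness is not a property of $H(L(\omega))$ qua deductively closed set; it comes from the fact that $f$ is a permutation, so both $x$ and $\neg x$ occupy slots, say $f_u=x$ and $f_v=\neg x$ with (without loss of generality) $v<u$, and Theorem~\ref{thm:summary} gives
\[
f_u\notin A_q^\alpha \Longleftrightarrow \{c,c^-\}\cap H(L(u)\cup\{f_u\})\neq\emptyset,
\]
and similarly for $f_v$. If both fail to lie in $A_q^\alpha$, then using $L(v)\subseteq L(u)$ one gets one of $c,c^-$ in $H(L(u)\cup\{x\})$ and one of $c,c^-$ in $H(L(u)\cup\{\neg x\})$; a short case analysis together with axiom~(4) and $x\lor\neg x\in H(\emptyset)$ places $c$ or $c^-$ in $H(L(u))$, contradicting that $L(u)$ is the limit value. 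This is precisely the paper's argument, and the finite sets $L(u),L(v)$ (rather than the global $T$) are where the work must be done.
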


\begin{proof}
Let $q, \alpha$ be as in the statement of the theorem; for simplicity, let us
write $A_{q}=A^{\alpha}_{q}$

Assume now that $x$ is the least number such that $f_{x} \notin A_{q}$, and
$\neg f_{x} \notin A_{q}$: let $f_{y}=\neg f_{x}$, and assume without loss of
generality that $y<x$, the other case $x<y$ being similar. By
Theorem~\ref{thm:summary} this is the consequence of one of the following
circumstances:
\begin{enumerate}

\item $c \in H(L(x) \cup \{f_{x}\})$, and $c \in H(L(y) \cup \{\neg
    f_{x}\})$: hence, $c\in H(L(x) \cup \{f_{x}\})$ and $c\in H(L(y) \cup
    \{\neg f_{x}\})$, and by (d) of Definition~\ref{def:withconnectives},
    we have that $c \in H(L(x) \cup \{f_{x} \lor \neg f_{x}\})$. But then,
    as $f_{x} \lor \neg f_{x} \in H(\emptyset)$, we have $c \in H(L(x))$,
    contrary to the fact that $L(x)$ is the limit set.

\item $c \in H(L(x) \cup \{f_{x}\})$, and $c^{-} \in H(L(y) \cup \{\neg
    f_{x}\})$: in this case, it is easy to see (under the assumption that
    $y<x$) that $c^{-}\in H(L(x) \cup \{\neg f_{x}\})$ and $c^{-}\in H(L(x)
    \cup \{f_{x}\})$, giving that $c^{-}\in H(L(x) \cup \{f_{x} \lor \neg
    f_{x}\})$, and thus  $c^{-} \in H(L(x))$, contrary to the fact that
    $L(x)$ is the limit set.

\item $c^{-} \in H(L(x) \cup \{f_{x}\}$, and $c \in H(L(y) \cup \{\neg
    f_{x}\}$: the argument is simlar, having this time (under the
    assumption $y<x$) $c \in H(L(x) \cup \{\neg f_{x}\})$, and thus $c \in
    H(L(x))$.

\item $c^{-} \in H(L(x) \cup \{f_{x}\})$, and $c^{-} \in H(L(y) \cup \{\neg
    f_{x}\})$: Similar to (1), just replacing $c$ with $c^{-}$.
\end{enumerate}
It remains to show that exactly one of $f_{x}$ and $\neg f_{x}$ lies in
$A_{q}$, but this is obvious otherwise $c \in H(\emptyset)$ as $H$ is with
connectives.
\end{proof}

\begin{theorem}\label{thm:p-completion}
If $p=\langle H,f,f^-,c\rangle$ is a loopless (hence consistent)
$p$-dialectical system with connectives, then $A_p$ is a completion.
\end{theorem}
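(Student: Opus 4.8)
The plan is to follow the proof of Theorem~\ref{thm:q-implies-completion} almost verbatim, with Theorem~\ref{thm:summaryp-dialectical} playing the role of Theorem~\ref{thm:summary}. Since a $p$-dialectical system has only the contradiction $c$ and no counterexample symbol $c^-$, the four-case split of that proof collapses to its first case, so the argument is actually shorter.

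First I would record the observation that, because $p$ is loopless, $c \notin H(L(v))$ for every $v$, and hence $c \notin H(L(\omega)) = A_p$. Indeed, by Theorem~\ref{thm:summaryp-dialectical} and the discussion preceding it, each stack $r(v)$ stabilises to a \emph{finite} string whose top $\rho(v)$ must satisfy $c \notin H(L(v) \cup \{\rho(v)\})$ --- otherwise Clause~(2) of the $p$-dialectical procedure would keep appending values of $f^-$ to $r(v)$, contradicting looplessness. Since $L(v+1)$ equals $L(v)$ or $L(v) \cup \{\rho(v)\}$, an induction on $v$ yields $c \notin H(L(v))$ for all $v$; as $H$ is finitary and, in the loopless case, $A_p = L(\omega) = \bigcup_v L(v) = H(L(\omega))$, this gives $c \notin A_p$.

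Then I would show that, for every sentence $\sigma$, the set $A_p \cap \{\sigma, \neg\sigma\}$ has exactly one element. It cannot contain both: if $\sigma \in A_p$ and $\neg\sigma \in A_p$, then, since $A_p = H(L(\omega))$, property~(a) of Definition~\ref{def:withconnectives} together with the closure properties of $H$ gives $c \in H(\{\sigma, \neg\sigma\}) \subseteq H(H(L(\omega))) = H(L(\omega)) = A_p$, contradicting the previous paragraph. It cannot be empty either: suppose $x$ is least with $f_x \notin A_p$ and, letting $f_y = \neg f_x$, also $f_y \notin A_p$; assume $y < x$ (the case $x < y$ being symmetric). By Theorem~\ref{thm:summaryp-dialectical}, $c \in H(L(x) \cup \{f_x\})$ and $c \in H(L(y) \cup \{\neg f_x\})$; since $y < x$ gives $L(y) \subseteq L(x)$, also $c \in H(L(x) \cup \{\neg f_x\})$. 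By property~(d), $c \in H(L(x) \cup \{f_x\}) \cap H(L(x) \cup \{\neg f_x\}) = H(L(x) \cup \{f_x \lor \neg f_x\})$; by property~(c), $f_x \lor \neg f_x \in H(\emptyset) \subseteq H(L(x))$, so $H(L(x) \cup \{f_x \lor \neg f_x\}) = H(L(x))$, whence $c \in H(L(x))$ --- again a contradiction. Combining the two halves, $A_p$ is a completion.

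I do not expect a real obstacle here: the only genuine use of looplessness is the preliminary fact that $c \notin H(L(v))$ for all $v$, and once that is available the connective axioms~(c) and~(d) do all the work, exactly as in case~(1) of the proof of Theorem~\ref{thm:q-implies-completion}.
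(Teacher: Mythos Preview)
Your proof is correct. It follows the template of Theorem~\ref{thm:q-implies-completion} (the $q$-case) faithfully, using axioms (3) and (4) of Definition~\ref{def:withconnectives} to pass through $f_x \lor \neg f_x$ and conclude $c \in H(L(x))$, against your preliminary observation that $c \notin H(L(v))$ for every $v$.

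The paper's own proof of this theorem takes a slightly different route: instead of the disjunction axioms, it uses axioms (5)/(6) and (2) to deduce $\neg x \in H(L(u))$ and $x \in H(L(v))$ separately (from $c \in H(L(u)\cup\{x\})$ and $c \in H(L(v)\cup\{\neg x\})$ respectively), and then combines them via axiom (1) to get $c \in H(L(v))$. It also phrases the final contradiction dynamically, arguing at the level of stages that $L_s(v)$ would have to change after it has stabilised, rather than isolating ``$c \notin H(L(v))$ for all $v$'' as a standalone lemma the way you do. The two arguments are equivalent pieces of propositional reasoning packaged differently; your version has the minor expository advantage that it makes the role of looplessness explicit up front and then reuses the $q$-proof verbatim, while the paper's version avoids the preliminary lemma at the cost of a short stage-by-stage argument at the end.
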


\begin{proof}
Let $p=\langle H,f,f^{-},c\rangle$ a $p$-dialectical system with connectives
where $c\not\in H(\emptyset)$ (in such a way that something is not
derivable). Let $f_u=x$ and $f_v=\lnot x$ and without loss of generality
assume $u<v$. Suppose that $x\not\in A_p$; then $c\in H(L(u)\cup\{x\})$, and
by property (6) of definition 3.1, we have $x\rightarrow c\in H(L(u))$, from
which $\lnot x\in H(L(u))$. Suppose now that also $c\in H(L(v)\cup\{\lnot
x\})$, and therefore by the same argument  $x\in H(L(v))$. But there will be
a stage s such that for all $t\geq s$ we will have $L(v)=L_t(v)$. Moreover,
since $L(u)\subseteq L(v)$ and $H$ is an algebraic closure operator we can
assume that a $t$ is big enough to have $L(v)\subseteq L_t(v)\subseteq
H_t(L_t(v))$ from which $L(u)\subseteq H_t(L_t(v))$, giving that both $\lnot
x$ and $x$ belong to $H_t(L_t(v))$ and therefore $c \in H_s(L_s(v))$ for some
$s \ge t$, giving that $L_s(v)$ must change after $t$: contradiction.
\end{proof}

\subsection{Comparing dialectical, $q$-dialectical, and $p$-dialectical
completions} We now consider the relationships under inclusion of the various
systems with connectives.

An immediate consequence of Theorem~\ref{thm:from-d-to-p} is the following:

\begin{corollary}\label{cor:from-d-compl-to-q-compl}
Every dialectical completion is also a $p$-dialectical completion.
\end{corollary}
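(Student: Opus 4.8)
The plan is to chain together the two main transfer theorems already established for completions and representability. We need to show that every dialectical completion is a $p$-dialectical completion, i.e., that if $d = \langle H, f, c \rangle$ is a consistent dialectical system with connectives whose set of final theses $A_d$ is a completion, then there is a $p$-dialectical system with connectives representing the same set $A_d$. The natural route is to invoke Theorem~\ref{thm:from-d-to-p}: given a dialectical system $d$ with $H(\emptyset)$ infinite, we can build a $p$-dialectical system $p$ with $A_d = A_p$. Crucially, the construction in that theorem only modifies $f$ into $f^-$ (via a computable subset $Z \subseteq H(\emptyset)$) and keeps the operator $H$ unchanged, so $p$ retains exactly the same connectives $\neg, \rightarrow, \wedge, \lor$ and the same contradiction $c$; hence $p$ is again a system with connectives. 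Since $A_p = A_d$ is a completion by hypothesis, $p$ is a $p$-dialectical completion representing the same set.

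The first step is to verify the side hypothesis of Theorem~\ref{thm:from-d-to-p}, namely that $H(\emptyset)$ is infinite. This is immediate for a consistent system with connectives: by clause (3) of Definition~\ref{def:withconnectives}, $x \lor \neg x \in H(\emptyset)$ for every $x \in \omega$, and since $\lor$ is an injective computable function, these sentences $x \lor \neg x$ are pairwise distinct as $x$ ranges over $\omega$. Therefore $H(\emptyset)$ contains infinitely many elements, so the hypothesis is met and Theorem~\ref{thm:from-d-to-p} applies.

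Second, I would point out that the $p$-dialectical system $p$ produced is loopless (equivalently consistent, since for $p$-dialectical systems looplessness implies consistency, and here $c \notin H(\emptyset)$ because $d$ is consistent and $H$ is unchanged): this is needed so that Theorem~\ref{thm:p-completion} confirms — if one wants independent confirmation — that $A_p$ is a completion, but in fact we already know $A_p = A_d$ and $A_d$ is a completion by assumption, so no extra argument is required there. The cleanest phrasing simply says: by Theorem~\ref{thm:from-d-to-p} there is a $p$-dialectical system $p$, still with connectives, such that $A_p = A_d$; since $A_d$ is a completion, so is $A_p$, and thus the dialectical completion $A_d$ is a $p$-dialectical completion.

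I do not anticipate any serious obstacle here — this corollary is essentially a bookkeeping consequence of Theorem~\ref{thm:from-d-to-p}. The only point requiring a moment's care is confirming that the passage from $d$ to $p$ preserves the "with connectives" structure, which follows because that passage leaves $H$ (and hence all the connective functions and the constant $c$) untouched, modifying only the proposing apparatus. So the proof is a short three-line argument: establish $H(\emptyset)$ infinite via clause (3) and injectivity of $\lor$, apply Theorem~\ref{thm:from-d-to-p} to get $p$ with $A_p = A_d$ and with the same connectives, and conclude that $A_p$ is a completion because $A_d$ is.
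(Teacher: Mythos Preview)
Your proposal is correct and follows essentially the same route as the paper: verify that $H(\emptyset)$ is infinite because the system has connectives, apply Theorem~\ref{thm:from-d-to-p} to obtain a $p$-dialectical system $p$ with the same $H$ and $c$ (hence still with connectives) and with $A_p=A_d$. The only cosmetic difference is that the paper argues infinitude of $H(\emptyset)$ via iterated conjunctions $x\wedge x$ rather than via the tautologies $x\lor\neg x$, but the structure of the argument is identical.
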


\begin{proof}
The proof of Theorem~\ref{thm:from-d-to-p} shows that starting from a
dialectical system $d=\langle H, f, c\rangle$, with $H(\emptyset)$ infinite
then one can build a $p$-dialectical system $p$ with the same $H$, and the
same $c$, $p$ has connectives as $H$ does. On the other hand, the condition
that $H(\emptyset)$ be infinite is granted by the fact that $H$ has
connectives, and thus, for instance, if $x \in H(\emptyset)$ then $x \wedge x
\in H(\emptyset)$ as well.
\end{proof}

\begin{theorem}\label{thm:q-with-connectvs-is-dialectical}
If $(q,\alpha)$ is a consistent loopless $q$-dialectical pair, with
$q=\langle H, c, c^{-}, f, f^{-}\rangle$ a $q$-dialectical system with
connectives, and $\alpha$ a good approximation to $H$, then $A_{q}^{\alpha}$
is a dialectical completion.
\end{theorem}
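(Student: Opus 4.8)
The plan is to reduce the statement to the construction of one computable permutation. Given the consistent loopless $q$-dialectical system with connectives $q=\langle H,f,f^-,c,c^-\rangle$ and the good approximation $\alpha$, put $A:=A_q^{\alpha}$; by Theorem~\ref{thm:q-implies-completion} $A$ is a completion, and one checks exactly as in the proof of Theorem~\ref{thm:independence} that $A$ is moreover consistent and deductively closed over $H$, i.e.\ $c\notin H(A)=A$. I would keep the \emph{same} operator $H$: it already satisfies all the clauses of Definition~\ref{def:withconnectives}, and $H(\emptyset)$ is infinite because by clause~(3) it contains the injective family $\{x\lor\neg x:x\in\omega\}$. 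So it remains to produce a computable permutation $f^{*}$ for which the dialectical system $d=\langle H,f^{*},c\rangle$ — which is then automatically a dialectical system with connectives — satisfies $A_d=A$, and this will exhibit $A$ as a dialectical completion.

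The first step is to turn ``$A_d=A$'' into a condition on $f^{*}$ alone. By Theorem~\ref{thm:summary} (and the fact that in a dialectical system the stack at each position is either empty or the singleton $\langle f^{*}_y\rangle$), $d$ satisfies $f^{*}_x\in A_d\iff c\notin H(L^d(x)\cup\{f^{*}_x\})$ with $L^d(x)=A_d\cap\{f^{*}_0,\dots,f^{*}_{x-1}\}$. Hence, by induction on $x$, it is enough to choose $f^{*}$ so that for every $x$
\[
 f^{*}_x\in A \ \Longleftrightarrow\ c\notin H\big((A\cap\{f^{*}_0,\dots,f^{*}_{x-1}\})\cup\{f^{*}_x\}\big).
\]
The implication ``$\Leftarrow$'' when $f^{*}_x\in A$ is automatic, the displayed set being contained in the consistent deductively closed set $A$. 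When $\sigma:=f^{*}_x\notin A$ we have $\neg\sigma\in A$ by completeness, and then clause~(1) of Definition~\ref{def:withconnectives} makes the right-hand side fail as soon as $\neg\sigma$ lies in $H$ of the $A$-members already proposed. So the whole proof reduces to building a computable permutation $f^{*}$ with the property
\[
 (\ast)\qquad \sigma\notin A\ \text{ and }\ \sigma=f^{*}_x\ \Longrightarrow\ \neg\sigma\in H\big(A\cap\{f^{*}_0,\dots,f^{*}_{x-1}\}\big);
\]
given $(\ast)$, the induction yields $A_d\cap\{f^{*}_0,\dots,f^{*}_x\}=A\cap\{f^{*}_0,\dots,f^{*}_x\}$ for all $x$, hence $A_d=A$.

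The delicate point — and where I expect the bulk of the work to lie — is that $f^{*}$ must be computable even though $A$ is only $\Delta^0_2$: the naive recipe ``list the $A$-member of each complementary pair before the other'' is not effective, and indeed not every $\Delta^0_2$ completion admits an enumeration with property $(\ast)$ (the later sections exhibit $p$-dialectical completions that are not dialectical). This is exactly where the hypotheses are used: I would run the $q$-dialectical procedure along the good approximation $\alpha$ and let its parameters $r^q_s,\rho^q_s,L^q_s$ — which by Theorem~\ref{thm:summary} all reach finite limits — govern the order in which $f^{*}$ enumerates $\omega$. The scheme is to put onto $f^{*}$, in the order in which the $q$-run commits to them, the stable tops $\rho^q(x)$ of the successive stacks (so that in the limit the axioms accepted by $d$ have the same $H$-closure as $L^q(\omega)=\bigcup_x L^q(x)$, whose closure is $A$), while interleaving, ahead of each sentence $\tau$ that the $q$-run discards through a derivation $c\in H(\text{context}\cup\{\tau\})$, the finitely many sentences witnessing that derivation: by clause~(5) of Definition~\ref{def:withconnectives} such a derivation makes $\neg\tau$ derivable from that context, which is precisely the early $A$-information demanded by $(\ast)$. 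The hardest case is that of candidates revised away by the counterexample $c^-$ rather than by $c$, for which no derivation of $c$ is at hand: here one must order $f^{*}$ so that, before such a $\tau$ is proposed, enough final axioms (and witnesses from $c$-derivations occurring later in the $q$-run) have already been listed to force $\neg\tau$ into $H$ of the prefix whenever $\tau\notin A$, and so that $\tau$ is never listed as an accepted axiom only to be refuted later. Showing that this ordering can be chosen computably is the crux; it rests on the looplessness of $(q,\alpha)$, which controls how the relevant contexts can evolve, and on the completeness of $A$. The remaining steps — interleaving so that $f^{*}$ is a total computable permutation of $\omega$, and checking $(\ast)$ and the induction against the limit sets — are routine bookkeeping of the kind already carried out in the proofs of Theorems~\ref{thm:from-d-to-p} and~\ref{thm:from-p-to-q}.
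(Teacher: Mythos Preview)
Your reduction to property $(\ast)$ is correct, and the decision to keep $H$ and redesign only the proposing function is the right one; the paper does the same. The gap is exactly where you place it: the case of a candidate $\tau$ discarded via $c^-$ rather than $c$. You call this ``the crux'' and gesture at looplessness and completeness, but you never name the concrete mechanism, and without it the computable construction of $f^{*}$ is not actually carried out. As you yourself observe, not every $\Delta^0_2$ completion admits such an $f^{*}$, so something specific to the $q$-hypotheses must be isolated, and your proposal does not do so.

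The paper supplies precisely the missing idea, and it is far simpler than the interleaving scheme you sketch. By Theorem~\ref{thm:summary}, $f_x\in A_q^{\alpha}$ iff $\{c,c^-\}\cap H(L(x)\cup\{f_x\})=\emptyset$; taking $f_x=c^-$ shows at once that $c^-\notin A_q^{\alpha}$, hence by completeness $\neg c^-\in A_q^{\alpha}$. Let $u$ be the slot with $f_u=\neg c^-$ and let $t_0$ be a stage after which $L(u+1)$ is at its limit and $c\in H_{t_0}(\{c^-,\neg c^-\})$. For every $s\ge t_0$ and every $v>u$ we have $\neg c^-\in L_s(v)$; by goodness of $\alpha$, if $c^-\in H_s(L_s(v))$ then $\{c^-,\neg c^-\}\subseteq H_s(L_s(v))$ and hence $c\in H_s(H_s(L_s(v)))\subseteq H_s(L_s(v))$. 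Thus from $t_0$ onward the $c^-$-clause of the $q$-procedure never applies without the $c$-clause applying at the same or an earlier slot: the $q$-procedure literally \emph{is} a dialectical procedure after $t_0$. The dialectical system $d=\langle H,g,c\rangle$ is then obtained by hard-coding the already stable tops $\rho^q(v')$ into the initial segment of $g$ (padding with elements of $H(\emptyset)$ to keep $g$ injective) and letting $g$ follow $f$ thereafter.

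So the ``hardest case'' you isolate dissolves under a single observation, $\neg c^-\in A_q^{\alpha}$, which is exactly the specific use of completeness you were reaching for but did not pin down. With it there is no need to list ``witnesses from $c$-derivations occurring later in the $q$-run'': after $t_0$ there are no $c^-$-revisions at all, and only the routine bookkeeping you mention at the end remains.
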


\begin{proof}
Suppose that  $(q,\alpha)$ is a loopless $q$-dialectical pair, $q=\langle H,
c, c^{-}, f, f^{-}\rangle$ is a $q$-dialectical system with connectives, $c
\notin H(\emptyset)$ and $\alpha$ is a good approximation to $H$. Then
$A^\alpha_q$ is a completion by Theorem~\ref{thm:q-implies-completion}, and
thus $\neg c^{-} \in A_{q}^{\alpha}$: let $u$ be such that $f_{u}=\neg
c^{-}$, hence $r(u)=\langle \neg c^{-}\rangle$, and let $t_{0}$ be the least
stage such that $L(u+1)$ has reached limit already, $\neg c^{-} \le t_{0}$
(thus each $s\ge t_{0}$ has an axiom $\langle \neg c^{-}, \{\neg
c^{-}\}\rangle \in H_{s}$), and $c \in H_{t_{0}}(\{c^{-}, \neg c^{-}\})$.
Suppose now that $s\ge t_{0}$ is a stage such that $c^{-} \in
H_{s}(L_{s}(v))$ with $v> u$. But $H_{s}$ is an algebraic closure operator,
as $\alpha$ is good: therefore  $\neg c^{-}\in H_{s}(L_{s}(v))$ since $\neg
c^{-} \in L_{s}(v)$. This gives $\set{c^{-}, \neg c^{-}} \subseteq
H_{s}(L_{s}(v))$, hence $c\in H_{s}(H_{s} (L_{s}(v))\subseteq
H_{s}((L_{s}(v))$.  It is then clear that starting from $t_0$, the
$q$-dialectical procedure behaves as a dialectical procedure, since $f^-$ no
longer plays any role.

Let $v$ be the greatest slot such that for every $s \ge t_0$,
$L_s(v)=L_{t_0}(v)$ (clearly $v>u$; such a maximum exists since at $t_{0}$
almost all $r(v)$ are empty), and let $d=\langle H, g,c \rangle$ be the
dialectical system where $g$ is defined as follows. First fix a strictly
increasing computable sequence $z_{0}<z_{1}< \cdots$ of elements of
$H(\emptyset)$. Then
\begin{itemize}
\item if $v'<v$ then let
\[
g_{v'}=
\begin{cases}
f_{v'}, &\text{if $r(v')=\langle \mbox{ }\rangle$},\\
\rho(v'), &\text{otherwise
and $\rho(v')\notin \{g_{v''}: v''<v'\}$},\\
\min  z_{i}\notin \{g_{v''}: v''<v'\}, &\text{otherwise};
\end{cases}
\]
\item
if $v'\ge v$ then let
\[
g_{v'}=
\begin{cases}
f_{v'}, &\text{if $f_{v'} \notin \{g_{v''}: v''<v\}$},\\
\min  z_{i}\notin \{g_{v''}: v''<v'\}, &\text{otherwise}.
\end{cases}
\]
\end{itemize}
Then $g$ is a computable permutation and by the above remarks, it is easy to
see that $A^{\alpha}_{q}=A_{d}$.
\end{proof}

\begin{theorem}
If $p=\langle H, f, f^{-},c\rangle$ is a loopless $p$-dialectical system with
connectives, in which $f^-=\neg$, then $A_p$ is both a dialectical
completion, and a $q$-dialectical completion.
\end{theorem}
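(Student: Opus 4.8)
The plan is to show that, because $f^-=\neg$, running the $p$-dialectical procedure on $p$ produces exactly the same deductively closed set as running the ordinary dialectical procedure on $d=\langle H,f,c\rangle$ — that is, on $p$ with the revising function simply forgotten — and then to get the $q$-dialectical case by collapsing the counterexample onto the contradiction. Throughout I would use that $A_p$ is already a completion by Theorem~\ref{thm:p-completion}, and in particular that $A_p$ is consistent: since $A_p=H(A_p)$ (closedness of $A_p$ is part of Theorem~\ref{thm:independence}), if $c\in A_p$ then $A_p=H(\{c\})=\omega$, contradicting completeness; hence $c\notin H(X)$ for every $X\subseteq A_p$.

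\emph{$A_p$ is a dialectical completion.} Write $L^p,\rho^p$ and $L^d,\rho^d$ for the corresponding parameters, which all reach finite limits ($p$ is loopless, so Lemma~\ref{lem:independence}, Theorem~\ref{thm:independence} and Theorem~\ref{thm:summaryp-dialectical} apply; for $d$ use Theorem~\ref{thm:summary}). The decisive point is that, with $f^-=\neg$, a revision step of $p$ is deductively inert. Indeed, once $L^p(u)$ has stabilized the limit stack at slot $u$ is an initial segment of $\langle f_u,\neg f_u,\neg\neg f_u,\dots\rangle$, the entry $\neg^{i+1}f_u$ being present precisely when $c\in H(L^p(u)\cup\{\neg^{i}f_u\})$; by condition~(5) of Definition~\ref{def:withconnectives} this forces $\neg^{i+1}f_u\in H(L^p(u))$. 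So the limit top $\rho^p(u)$ equals $f_u$ when $c\notin H(L^p(u)\cup\{f_u\})$, and otherwise $\rho^p(u)\in H(L^p(u))$. I would then prove by induction on $u$ that $H(L^p(u))=H(L^d(u))$: the base case is $\emptyset=\emptyset$, and for the step, since $H$ is an algebraic closure operator the inductive hypothesis makes the consistency checks $c\in H(L^p(u)\cup\{f_u\})$ and $c\in H(L^d(u)\cup\{f_u\})$ agree, so ``$f_u$ kept by $d$'' (giving $L^d(u+1)=L^d(u)\cup\{f_u\}$) coincides with ``$\rho^p(u)=f_u$'' (giving $L^p(u+1)=L^p(u)\cup\{f_u\}$), where the closures stay equal; in the complementary case $d$ drops $f_u$ while $p$ adjoins $\rho^p(u)\in H(L^p(u))$, so again the closures at $u+1$ coincide. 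Passing to the limit (using that $H$ commutes with increasing unions, together with $A_p=H(\bigcup_u L^p(u))$ from Theorem~\ref{thm:independence} and $A_d=H(\bigcup_u L^d(u))$, the dialectical analogue, cf.\ \cite{trial-errorsI}) gives $A_p=A_d$. Since $H$ carries the connectives, so does $d$, and $A_d=A_p$ is a completion; hence $A_p$ is a dialectical completion.

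\emph{$A_p$ is a $q$-dialectical completion.} I would take the $q$-dialectical system $q=\langle H,f,g^-,c,c^-\rangle$ with $c^-:=c$ and $g^-$ any acyclic computable function whose range omits $c$ (so the requirement $c^-\notin\range(g^-)$ is met); then $\langle H,f,c\rangle=d$ is a dialectical system with connectives. Because $c^-=c$, Clause~(3) of the $q$-dialectical procedure — which asks that $c^-\in\chi_s(k)$ while $c\notin\chi_s(k')$ for all $k'\le k$ — can never apply, and Clauses~(1) and~(2) then read verbatim as the corresponding clauses of the dialectical procedure of $d$. Hence for every approximation $\alpha$ to $H$ the $q$-dialectical run of $q$ agrees stage by stage with the dialectical run of $d$, so $A_q^{\alpha}=A_d=A_p$; as $q$ is (like $d$) loopless, consistent, and carries the connectives, $A_p$ is a $q$-dialectical completion.

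The main obstacle is the first step, and within it the one fact that does the work is condition~(5) of Definition~\ref{def:withconnectives}: it is exactly what turns ``replace an axiom by its negation'' into a deductively void operation, and it is why the argument goes through for $f^-=\neg$ but would fail for a generic revising function. What remains is bookkeeping — verifying that the revised slots of $p$ contribute nothing new to the closure, and that along the two runs the consistency tests are literally the same test once the closures are known to coincide.
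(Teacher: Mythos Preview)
Your first part is essentially the paper's own argument. The paper too compares $p$ to $d=\langle H,f,c\rangle$ and shows by induction on $u$ that the two procedures agree: it phrases the induction as ``$r^d(u)=\langle f_u\rangle$ iff $r^p(u)=\langle f_u\rangle$, and otherwise $r^p(u)=\langle f_u,\neg f_u\rangle$ with $\neg f_u\in H(L^d(u))$'', while you phrase it as ``$H(L^p(u))=H(L^d(u))$''. Both rest on exactly the observation you isolate, namely that condition~(5) of Definition~\ref{def:withconnectives} makes the revised top $\neg f_u$ lie in $H(L^p(u))$, so the revision step is deductively inert. Your version is marginally cleaner in that it works directly at the level of closures and does not need to compute the exact length of the limit stack (the paper pins it down to length~$2$), but it is the same proof.

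For the second part your strategy---choose $c^-$ so that Clause~(3) of the $q$-procedure never fires, hence the run collapses to the dialectical run of $d$---is again the paper's strategy, but your implementation $c^-:=c$ is risky. The paper explicitly takes $c^-:=c\wedge c$ and remarks that this makes $q$ a ``proper'' $q$-dialectical system because $c\ne c^-$; the intended notion (imported from \cite{trial-errorsI}) evidently keeps $c$ and $c^-$ distinct. With $c^-=c\wedge c$ one has $c\in H(\{c^-\})$, and using a \emph{good} approximation (Lemma~\ref{lem:goodapproximation}) one gets $c\in H_s(X)$ whenever $c^-\in H_s(X)$, so Clause~(3) is again inapplicable and $A_q^\alpha=A_d=A_p$ as you want. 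The fix is a one-line change to your argument; everything else you wrote goes through.
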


\begin{proof}
Let $p=\langle H, f, f^{-},c\rangle$ be a $p$-dialectical system with
connectives in which $f^-=\neg$ and $c$ is a contradiction. Let $d=\langle H,
f, c \rangle$: we claim that $A_{p}=A_{d}$. Let us use the superscripts $p$
and $d$, to distinguish the relevant parameters of $p$ and $d$, respectively.
We will prove by induction on $u$ that
\begin{itemize}
\item  if $r^{d}(u)=\langle f_{u} \rangle$ then $r^{p}(u)=\langle
    f_{u}\rangle$, and if $r^{d}(u)=\langle \mbox{ } \rangle$ then
    $r^{p}(u)=\langle f_{u}, \neg f_{u} \rangle$;

\item for every $v\le u$, if $r^{d}(v)=\langle \mbox{ } \rangle$ then $\neg
    f_{v} \in H(L_{d}(v))$.
\end{itemize}
Notice that from this and the fact that $A_{q}$ and $A_{p}$ are completions,
it easily follows that $A_{p}=A_{d}$.

Case $u=0$ (base of the induction).  This case easily follows from the
assumptions and the basic definitions.

Suppose that the clam is true of $u$. If $c \in H(L_{p}(u+1)\cup
\{f_{u+1}\})$ then $c \in H(L_{d}(u+1) \cup \{\neg f_{v}: v\le u \,\&\,
r^{d}(v)=\langle \mbox{ } \rangle\} \cup \{f_{u+1}\})$. By the inductive
assumption, $\{\neg f_{v}: v\le u \,\&\, r^{d}(v)=\langle \mbox{ }
\rangle\}\subseteq H(L_{d}(u+1))$, hence $c \in H(L_{d}(u+1)\cup
\{f_{u+1}\})$. This shows that if $r^{d}(u+1)=\langle f_{u+1} \rangle$ then
$r^{p}(u+1)=\langle f_{u+1}\rangle$. The claims that if $r^{d}(u+1)=\langle
\mbox{ } \rangle$ then $r^{p}(u+1)=\langle f_{u+1}, \neg f_{u+1} \rangle$,
and if $r^{d}(u+1)=\langle \mbox{ } \rangle$ then $\neg f_{u+1} \in
H(L^{d}(u+1))$, come straight from the definitions.

The remaining claim (i.e. $A_p$ is a $q$-dialectical completion) follows from
the following lemma.
\begin{lemma}
For every dialectical completion $A_d$ there exists a loopless
$q$-dialectical pair $(q, \alpha)$ such that $A_d=A^\alpha_q$.
\end{lemma}
\begin{proof}
Let $d=\langle H, f, c\rangle$ be a consistent dialectical system with
connectives. By Lemma~\ref{lem:goodapproximation} let $\alpha$ be a good
approximation to $H$; let $c^-$ be $c \wedge c$ (thus $c \in H(\{c'\})$);
finally let $f^-$ be any proposing function. Notice that $q=\langle H, f,
f^-, c, c^- \rangle$ is a (proper) $q$-dialectical system as $c \ne c^-$. We
claim that $A_d=A^\alpha_q$. This follows from the fact that $c^-$ does not
play any role in the $q$-dialectical procedure, as if $c^- \in H_s(X)$, then
by goodness of the approximation, we also have $c\in H_s(X)$ since $c \le \in
H_s(\{c^-\})\subseteq H_s(H_s(X))\subseteq H_s(X)$.
\end{proof}

\end{proof}

\section{$p$-dialectical sets and degrees}

The characterizations of the Turing degrees of the dialectical sets and of
the $q$-dialectical sets has been given in \cite{trial-errorsII}:

\begin{lemma}\label{lem:degrees}
The Turing degrees of the dialectical sets, and of the $q$-dialectical sets,
are exactly the c.e. Turing degrees.
\end{lemma}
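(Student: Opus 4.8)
The statement has two halves: that every dialectical set and every $q$-dialectical set is Turing equivalent to a c.e.\ set, and conversely that every c.e.\ Turing degree contains such a set. I would prove the two inclusions separately, and derive the $q$-dialectical case from the dialectical one.

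\textbf{Every c.e.\ degree occurs.} Given a c.e.\ set $W=\bigcup_s W_s$, I would code it directly into the deduction operator. Work in a numbering in which the contradiction $c$, an infinite computable set of ``coding numbers'' $\{a_n:n\in\omega\}$, and an infinite computable set of ``truth numbers'' $\{t_j:j\in\omega\}$ are pairwise disjoint, and let $H$ be the enumeration operator generated by the axioms $\langle x,\{x\}\rangle$ and $\langle x,\{c\}\rangle$ for all $x$, by $\langle t_j,\emptyset\rangle$ for all $j$, and by $\langle c,\{a_n\}\rangle$ for those $n$ with $n\in W$ (this last family is c.e.\ since $W$ is). One checks routinely that $H$ is an algebraic closure operator with $H(\emptyset)$ infinite and $H(\{c\})=\omega$, so $d=\langle H,\mathrm{id},c\rangle$ is a dialectical system. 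Since $c\in H(Y)$ holds exactly when $c\in Y$ or some $a_n$ with $n\in W$ belongs to $Y$, an induction on $x$ based on Theorem~\ref{thm:summary} shows that $L(x)$ never contains $c$ or any such $a_n$, whence $f_x=x\in A_d$ iff $x\neq c$ and $x\notin\{a_n:n\in W\}$. Thus $A_d=(\omega\setminus\{c\})\setminus\{a_n:n\in W\}$, so $A_d\equiv_T W$ (one direction from $n\in W\iff a_n\notin A_d$, the other because membership in $A_d$ is decidable with oracle $W$). As $H(\emptyset)$ is infinite, Theorems~\ref{thm:from-d-to-p} and~\ref{thm:from-p-to-q} convert $d$ into a $q$-dialectical system with the same final theses, so the same degrees are realized by $q$-dialectical sets.

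\textbf{Every such degree is c.e.} Let $d=\langle H,f,c\rangle$ and let $u^{*}\le\omega$ be as in Theorem~\ref{thm:independence}. If $u^{*}<\omega$ the permanently accepted axioms $L(u^{*})$ form a fixed finite set, so $A_d=H(L(u^{*}))$ is itself c.e.\ and we are done. If $u^{*}=\omega$ I would exploit the characterization $f_x\in A_d\iff c\notin H(L(x)\cup\{f_x\})$ of Theorem~\ref{thm:summary}, which exhibits $A_d$ as the unique solution of the recursion ``$f_x$ is accepted iff $c$ is not derivable from the already-accepted $\{f_i:i<x\}$ together with $f_x$''. Each step of this recursion is a c.e., indeed monotone (since $H$ is a closure operator), condition on the previously-decided bits. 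I would accordingly build, while running the procedure, a c.e.\ set $W$ recording stabilized ``reasons for rejection'' of axioms; then $A_d\le_T W$ is immediate, since $L(x)$ can be recomputed by recursion on $x$ and acceptance of $f_x$ decided by a single query to $W$. The converse reduction $W\le_T A_d$ is the heart of the matter: one must show that the final set of accepted axioms already pins down enough of the derivability pattern to decide $W$, and this is where the monotonicity of $H$ and a careful analysis of the approximation enter, as carried out in~\cite{trial-errorsII}. For $q$-dialectical sets the argument runs in parallel, using for loopless pairs the analogous characterization $f_x\in A_q^{\alpha}\iff\{c,c^-\}\cap H(L(x)\cup\{f_x\})=\emptyset$ from Theorem~\ref{thm:summary}, and for pairs with loops the explicit description of such runs from~\cite{trial-errorsI}.

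The main obstacle is exactly this reverse reduction $W\le_T A_d$ in the case $u^{*}=\omega$: bounding the degree of a dialectical set below a c.e.\ degree is routine (the set is even $\Delta^0_2$, in fact $\omega$-c.e.), but showing that the degree equals that of a c.e.\ set requires that the dialectical set itself carries back enough of the contradiction-history, which depends on the structural behaviour of the procedure rather than merely on the complexity class of $A_d$.
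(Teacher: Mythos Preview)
The paper does not actually prove this lemma: its entire proof is the sentence ``See \cite{trial-errorsII}''. So your sketch already goes well beyond the paper, and in outline it matches the argument one would expect from \cite{trial-errorsII}.

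There is, however, a genuine technical slip in your construction for the ``every c.e.\ degree occurs'' direction. The enumeration operator $H$ you describe is \emph{not} an algebraic closure operator: with only the axiom $\langle c,\{a_n\}\rangle$ for $n\in W$ you get $c\in H(\{a_n\})$, so $H(H(\{a_n\}))=\omega$, but $H(\{a_n\})=\{a_n\}\cup\{t_j:j\in\omega\}\cup\{c\}\ne\omega$, and idempotence fails. Definition~\ref{def:$p$-systems} requires $H$ to be a closure operator, so as stated $d$ is not a dialectical system. The fix is easy---add the axioms $\langle x,\{a_n\}\rangle$ for all $x$ and all $n\in W$, or equivalently replace $H$ by $H^\omega$ as in Lemma~\ref{lem:goodapproximation}---and after that your computation of $A_d$ and the equivalence $A_d\equiv_T W$ go through exactly as you describe.

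Two smaller remarks. First, for dialectical systems Theorem~\ref{thm:summary} guarantees that all the limits $r(x),L(x)$ exist finite, so your case $u^*<\omega$ never occurs; the case split is relevant only for $p$-systems. Second, for the converse direction you correctly isolate the nontrivial point---that the final theses compute back a c.e.\ set recording the rejection history---and then, like the paper, defer the actual argument to \cite{trial-errorsII}; your paragraph does not contain a proof of $W\le_T A_d$, only a description of what would have to be shown.
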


\begin{proof}
See \cite{trial-errorsII}.
\end{proof}

Let us now  consider the case of $p$-completions. If $T$ is a formal theory
with set of theorems $\Thm_T$, and $d$  is a dialectical system with
connectives such that $H(\emptyset)= \Thm_T$, then we say that $d$ is a
\emph{dialectical system for $T$}. If $d$ is a consistent dialectical system
for $T$, and $T$ is consistent, then $A_d$ is a completion of $T$. Let us
consider a propositional calculus with propositional atoms $\set{p_{i}: i\in
\omega}$: by codes, we assume that this set coincides $\omega$. Given a set
$A \subseteq \omega$, let $T_{A}$ be the propositional calculus, obtained by
adding to the classical propositional calculus the axioms $\set{p_{i}: i \in
A}$. The following is due to \cite{Bernardi}.

\begin{lemma}\label{lem:dial+connectives}
For every c.e. $A$ there exists a dialectical system $d=\langle H,f,c
\rangle$ for the theory $T_A$, such that:
\begin{enumerate}
\item $A\leq_mA_d$,
\item $A_d\leq_{tt} \Thm_{T_A}$,
\item $\Thm_{T_A}\leq_{tt} A$,
\end{enumerate}
and therefore $A_d \equiv_{tt} A$.
\end{lemma}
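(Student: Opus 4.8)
The goal is to build, for a given c.e.\ set $A$, a dialectical system $d=\langle H,f,c\rangle$ with connectives for $T_A$ witnessing the three reductions. The natural choice is to let $H$ be the logical deduction operator of the propositional calculus $T_A$, so that $H(\emptyset)=\Thm_{T_A}$ and $H$ is automatically an algebraic closure operator with connectives; the contradiction $c$ is a fixed inconsistent sentence, say $p_0\wedge\neg p_0$ (after renaming atoms if needed so that this code is legitimate). The proposing function $f$ should be a computable permutation of $\omega$ that enumerates, in some effective order, all the propositional atoms $p_i$ together with all remaining sentences; the key point is to arrange $f$ so that each atom $p_i$ is proposed \emph{before} any sentence that could force a decision about $p_i$ for the wrong reason. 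Since $T_A$ is consistent (the atoms $p_i$, $i\in A$, are propositionally independent), $d$ is a consistent dialectical system for $T_A$, and hence by the remarks preceding the lemma $A_d$ is a completion of $T_A$.

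\textbf{Key steps.} First I would verify that $d$ is loopless/consistent and that $A_d$ is a completion of $T_A$; this is immediate from $c\notin H(\emptyset)=\Thm_{T_A}$ and the general fact quoted above. Second, for $A\le_m A_d$: the completion $A_d$ must decide every atom $p_i$, and because the $p_i$ with $i\in A$ are exactly the nonlogical axioms, one shows by the arrangement of $f$ (atoms proposed first, in order of their index, before any compound sentence) that $p_i\in A_d$ iff $i\in A$ --- indeed once an atom $p_i$ is proposed it is never contradicted by the earlier theses when $i\in A$ (consistency of $T_A$), while when $i\notin A$ the calculus never proves $p_i$, and by Theorem~\ref{thm:summary}, $p_i\in A_d \iff c\notin H(L(x)\cup\{p_i\})$, which fails exactly when $L(x)$ already proves $\neg p_i$; arranging $f$ so that $\neg p_i$ is proposed only after $p_i$ forces $p_i\in A_d$ when $i\in A$ and $\neg p_i\in A_d$ (so $p_i\notin A_d$) when $i\notin A$. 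This gives the computable map $i\mapsto p_i$ witnessing $A\le_m A_d$. Third, for $\Thm_{T_A}\le_{tt} A$: a sentence $\phi$ is a theorem of $T_A$ iff it is a propositional consequence of finitely many atoms $p_{i_1},\dots,p_{i_k}$ all of which lie in $A$; since from $\phi$ one can compute the finite set of atoms occurring in $\phi$ (or, more carefully, reduce validity of $\phi$ to a truth-table condition over which of finitely many relevant atoms are in $A$), this is a truth-table reduction to $A$. Fourth, for $A_d\le_{tt}\Thm_{T_A}$: again using Theorem~\ref{thm:summary}, membership $f_x\in A_d$ is equivalent to $c\notin H(L(x)\cup\{f_x\})$; one argues that the limit set $L(x)$ is, up to logical equivalence, determined by finitely much information, and that deciding $f_x\in A_d$ reduces truth-table to queries of the form ``is $\psi\in\Thm_{T_A}$'' for finitely many effectively computable $\psi$ (essentially the queries about whether various atoms relevant to $f_x$ got accepted, which by the completion property reduce to $\Thm_{T_A}$). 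Combining (2) and (3) via the transitivity of $\le_{tt}$ yields $A_d\equiv_{tt}A$.

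\textbf{Main obstacle.} The routine parts are the consistency of $T_A$ and the reduction $\Thm_{T_A}\le_{tt}A$. The delicate point is engineering $f$ and then proving $A_d\le_{tt}\Thm_{T_A}$: one must show that the eventual set of theses, though built in a nonmonotone $\Delta^0_2$ fashion, is in fact \emph{truth-table} (not merely Turing) reducible to $\Thm_{T_A}$ --- i.e.\ that the number and content of the queries needed to decide $f_x\in A_d$ can be bounded computably in $x$. This hinges on showing that the limit stack $L(x)$ stabilizes in a way controlled by finitely many atom-decisions, each of which is itself tt-reducible to $\Thm_{T_A}$. I expect the cleanest route is to first establish that $A_d$, being a completion of $T_A$, is determined by its restriction to the atoms (every other sentence's membership is a fixed truth-functional combination), then show that atom $p_i$'s membership in $A_d$ is decided tt from $\Thm_{T_A}$ by asking whether $p_i$ or $\neg p_i$ is a theorem of the finite subtheory generated by lower-indexed decided atoms --- a bounded search --- and finally invoke that $T_A$ is decidable relative to $\Thm_{T_A}$ via truth tables. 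The reference to \cite{Bernardi} presumably supplies exactly this atom-tracking bookkeeping.
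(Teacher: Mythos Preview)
The paper itself gives no proof and simply cites \cite{Bernardi}, so there is nothing to compare at the level of approach; your outline is a reasonable reconstruction of what Bernardi's argument should look like. However, your key step for item~(1) contains a genuine error that breaks the reduction $A\le_m A_d$.

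You propose to arrange $f$ so that each atom $p_i$ is enumerated \emph{before} its negation $\neg p_i$ (and before all compound sentences). But then, when $p_i$ is proposed with $i\notin A$, the current set $L(x)$ consists only of previously accepted atoms; since distinct propositional atoms are logically independent, $c\notin H(L(x)\cup\{p_i\})$, and by Theorem~\ref{thm:summary} the dialectical procedure \emph{accepts} $p_i$. Once $p_i$ sits in $L$, the later proposal of $\neg p_i$ is the one that gets rejected, not $p_i$. Consequently every atom ends up in $A_d$ regardless of $A$, and the map $i\mapsto p_i$ is not an $m$-reduction of $A$ to $A_d$ at all.

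The fix is to reverse the order: propose $\neg p_i$ \emph{before} $p_i$. Then $c\in H(L(x)\cup\{\neg p_i\})$ holds exactly when $p_i\in H(L(x))$, and by independence of the atoms this happens iff $p_i\in H(\emptyset)=\Thm_{T_A}$, i.e.\ iff $i\in A$. Hence $\neg p_i$ is accepted precisely when $i\notin A$, so $p_i\in A_d\iff i\in A$ as desired. With this correction the rest of your plan goes through: the completion $A_d$ is then the unique truth assignment with $v(p_i)=1\iff i\in A$, from which both $A_d\le_{tt}\Thm_{T_A}$ and $\Thm_{T_A}\le_{tt}A$ follow by the standard truth-table argument you sketch (membership of any sentence $\phi$ in $A_d$ is a Boolean function of the finitely many bits $A(i)$ for atoms $p_i$ occurring in $\phi$).
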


\begin{proof}
See \cite{Bernardi}.

\end{proof}

\begin{corollary}
The c.e. Turing degrees coincide with the degrees of $p$-completions, and
with the degrees of $p$-dialectical sets.
\end{corollary}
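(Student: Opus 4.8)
The goal is to show that the c.e.\ Turing degrees coincide with the degrees of $p$-completions and with the degrees of $p$-dialectical sets. I plan to prove this by two inclusions, using the results already assembled in this section and Section~2.

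\textbf{One inclusion: every such degree is c.e.} First I would observe that every $p$-dialectical set is $\Delta^0_2$ by Theorem~\ref{thm:summaryp-dialectical} (together with the good-approximation corollary, which shows the approximation $\{A_s\}$ is a genuine $\Delta^0_2$ approximation), so a priori its degree lies in $\mathbf{0}'$. To pin it down to a c.e.\ degree, I would invoke Theorem~\ref{thm:from-p-to-q}: any $p$-dialectical set is a $q$-dialectical set, and by Lemma~\ref{lem:degrees} the Turing degrees of $q$-dialectical sets are exactly the c.e.\ degrees. Hence the degree of any $p$-dialectical set --- and in particular of any $p$-completion, since a $p$-completion is by definition the set of final theses of a ($p$-dialectical) system with connectives --- is c.e.

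\textbf{Other inclusion: every c.e.\ degree arises, even as a $p$-completion.} Here I would chain the two lemmas quoted just above. Given a c.e.\ set $A$, apply Lemma~\ref{lem:dial+connectives} to obtain a dialectical system $d=\langle H,f,c\rangle$ with connectives for the theory $T_A$ such that $A_d\equiv_{tt}A$; in particular $A_d\equiv_T A$, and since $d$ has connectives, $A_d$ is a dialectical completion (of $T_A$). Now $H(\emptyset)=\Thm_{T_A}$ is infinite (it is closed under $\wedge$, e.g.\ $x\in H(\emptyset)$ implies $x\wedge x\in H(\emptyset)$, exactly as in the proof of Corollary~\ref{cor:from-d-compl-to-q-compl}), so Theorem~\ref{thm:from-d-to-p} --- or more precisely Corollary~\ref{cor:from-d-compl-to-q-compl}, which notes that the constructed $p$-dialectical system keeps the same $H$ and hence is still a system with connectives --- yields a $p$-dialectical system $p$ with $A_p=A_d$, and $p$ is a $p$-completion. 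Thus every c.e.\ set is Turing-equivalent to a $p$-completion, so every c.e.\ degree is the degree of a $p$-completion (and a fortiori of a $p$-dialectical set).

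Combining the two inclusions: the degrees of $p$-completions $\subseteq$ degrees of $p$-dialectical sets $\subseteq$ c.e.\ degrees $\subseteq$ degrees of $p$-completions, so all three classes coincide. The only point requiring a little care --- but not really an obstacle, since it is already handled in Corollary~\ref{cor:from-d-compl-to-q-compl} --- is checking that passing from $d$ to $p$ via Theorem~\ref{thm:from-d-to-p} preserves the ``with connectives'' property; this is immediate because the construction there reuses the very same deduction operator $H$ and contradiction $c$, and only adds a revising function $f^-$, which plays no role in Definition~\ref{def:withconnectives}.
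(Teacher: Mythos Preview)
Your proposal is correct and follows essentially the same approach as the paper: use Lemma~\ref{lem:dial+connectives} plus Corollary~\ref{cor:from-d-compl-to-q-compl} to realize each c.e.\ degree as a $p$-completion, and use Theorem~\ref{thm:from-p-to-q} together with Lemma~\ref{lem:degrees} to bound the degrees of $p$-dialectical sets by the c.e.\ degrees. Your extra remarks (the $\Delta^0_2$ observation and the explicit check that $H(\emptyset)$ is infinite) are correct but not needed for the argument.
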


\begin{proof}
If $A$ is a c.e. set then by the above lemma there is a dialectical
completion $A_d$ with the same $tt$-degree as $A$. But every dialectical
completion is a $p$-completion by
Corollary~\ref{cor:from-d-compl-to-q-compl}, and thus every c.e. Turing
degree contains a $p$-completion. On the other hand every $p$-dialectical set
is also $q$-dialectical, thus  by Lemma~\ref{lem:degrees} we have that the
degree of any $p$-dialectical set is c.e.
\end{proof}

\section{A $p$-dialectical completion, which is
neither a dialectical completion, nor a $q$-dialectical completion }

In the following $T$ is taken to be Peano Arithmetic (assumed to be sound).

The following lemma has been known to logicians for many years already, and a
proof-theoretic proof can be found in, or at least worked out from,
Smory\'nski \cite[p.~362]{Smorynski:fifty}. This proof uses a version of the
fixed point theorem originally due to Kent \cite{Kent}. Notably it is
based on Rosser's method of comparison of witnesses and includes a
relativized proof predicate as in Kreisel-Levy Essential Unboundedness
Theorem, asserting that a certain formula is derivable from a true formula of
a certain fixed complexity (\cite[p.~362]{Smorynski:fifty})

We propose a purely computability-theoretic proof, which looks perhaps
simpler than \cite{Smorynski:fifty}. Being a $\Sigma_{n}$ ($\Pi_{n}$)
sentence means of course being provably equivalent in $T$ to a sentence which
is synctactically $\Sigma_{n}$ ($\Pi_{n}$).

\begin{lemma}\label{lem:fundamental}
For every $n\ge 1$, there exists a sentence $\psi \in \Sigma_{n+1}$ such
that, for every $\phi \in  \Delta_{n+1}$, if $\not \vdash_{T} \phi$, then $\not
\vdash_{T} \psi \rightarrow \phi$ and $\not \vdash_{T} \neg \psi \rightarrow
\phi$.
\end{lemma}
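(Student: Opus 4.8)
The plan is to reduce the statement to a conservativity assertion and then build the sentence by self-reference. First I would observe that the conclusion is equivalent to asking for a sentence $\psi\in\Sigma_{n+1}$ such that both $T+\psi$ and $T+\neg\psi$ are conservative over $T$ with respect to $\Delta_{n+1}$ sentences. Indeed, if $\phi\in\Delta_{n+1}$ carries $T$-provably equivalent forms $\phi^{\Sigma}\in\Sigma_{n+1}$ and $\phi^{\Pi}\in\Pi_{n+1}$, then $\vdash_{T}\psi\to\phi$ iff $T$ proves the $\Pi_{n+1}$ sentence $\neg\psi\vee\phi^{\Pi}$, while $\vdash_{T}\neg\psi\to\phi$ iff $T$ proves the $\Sigma_{n+1}$ sentence $\psi\vee\phi^{\Sigma}$; so the two halves of the conclusion are controlled, respectively, by the $\Pi_{n+1}$ behaviour of $\neg\psi$ and the $\Sigma_{n+1}$ behaviour of $\psi$. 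Two features are then forced on $\psi$: it must be independent of $T$ (if $T\vdash\psi$ then $\vdash_{T}\neg\psi\to\phi$ for every $\phi$, and dually if $T\vdash\neg\psi$), and it cannot itself lie in $\Delta_{n+1}$ (otherwise $\phi:=\psi$ would violate the conclusion). Hence $\psi$ must be a \emph{genuine} $\Sigma_{n+1}$ sentence that is moreover invisible to the whole class $\Delta_{n+1}$, which for $n\ge 1$ already contains $\Con(T)$ and the usual iterated consistency and reflection statements.

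Next I would construct $\psi$ by the recursion theorem --- the computability-theoretic counterpart of the Kent-style fixed point used in Smory\'nski's proof. The idea is that $\psi$ diagonalizes against the ``threats'' to the conclusion: a threat is (a code of) a $T$-proof $d$, either of $\psi\to\phi$ or of $\neg\psi\to\phi$, packaged with certificates that $\phi\in\Delta_{n+1}$ and with the information that $\not\vdash_{T}\phi$. One then phrases, Rosser-style, a comparison of such witnesses --- essentially ``which kind of threat occurs first'' --- in a way that keeps the whole assertion inside a single existential block (this is the point of Kent's fixed-point lemma, which prevents $\psi$ from acquiring an extra quantifier) while making the assertion genuinely $\Sigma_{n+1}$; the latter is achieved by expressing the relevant provability clauses with a \emph{relativized} proof predicate, namely derivability from a true $\Pi_{n}$ sentence, whose truth supplies the missing $\Sigma_{n+1}$ complexity. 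The sentence $\psi$ is then arranged so that a least threat of type $\psi\to\phi$ forces $\psi$ to be refutable relative to the oracle in a way that, combined with the given proof of $\psi\to\phi$, yields $\vdash_{T}\phi$, and symmetrically a least threat of type $\neg\psi\to\phi$ yields $\vdash_{T}\phi$.

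The verification would then go as follows. First, $\psi$ is independent of $T$: if $T\vdash\psi$ or $T\vdash\neg\psi$, one reads off from the fixed-point equivalence a threat of exactly the kind the construction was built to forestall, contradicting the soundness of $T$. Next, if $\phi\in\Delta_{n+1}$, $\not\vdash_{T}\phi$, and (towards a contradiction) $\vdash_{T}\psi\to\phi$, then this yields a certified threat of the first kind, and a case analysis on the least threat, using the comparison of Rosser witnesses built into $\psi$, gives either $\vdash_{T}\phi$, against the hypothesis, or $\vdash_{T}\bot$, against consistency; the case $\vdash_{T}\neg\psi\to\phi$ is symmetric, using the other type of threat. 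Since $\phi$ was an arbitrary $\Delta_{n+1}$ sentence, this establishes both halves of the statement.

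The hard part will be calibrating the self-referential content of $\psi$ so that its complexity is \emph{exactly} $\Sigma_{n+1}$: complex enough not to fall into $\Delta_{n+1}$ (which is what makes the relativized proof predicate indispensable) yet no more complex than $\Sigma_{n+1}$ (which is what forces the Rosser comparison-of-witnesses packaging, keeping everything within one existential quantifier), all while a single sentence has to neutralize both the $\psi\to\phi$ threats and the $\neg\psi\to\phi$ threats at once. Once the definition of $\psi$ is pinned down, the remaining steps are routine fixed-point bookkeeping together with the soundness of $T$.
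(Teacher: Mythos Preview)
Your proposal follows the proof-theoretic route that the paper itself cites as the known argument (worked out from Smory\'nski, via Kent's fixed-point lemma, Rosser comparison, and a relativized proof predicate). The paper instead gives a purely computability-theoretic proof: it fixes a computable enumeration $(\phi_i)$ of the $\Delta_{n+1}$ sentences, defines for each index $j$ a partial $\emptyset^{(n)}$-computable function $f_j$ that searches through proof codes $s$ for a proof of $\Phi_j^{\emptyset^{(n)}}(0){=}1\to\phi_i$ or of $\neg\,\Phi_j^{\emptyset^{(n)}}(0){=}1\to\phi_i$, paired with the condition $\neg\phi_i$ (the \emph{falsity} of $\phi_i$, which is $\emptyset^{(n)}$-decidable since $\phi_i\in\Delta_{n+1}$ and $T$ is sound), and then takes a recursion-theorem fixed point $e$ so that $\psi$ is the assertion $\Phi_e^{\emptyset^{(n)}}(0){=}1$. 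The verification is an induction on proof codes $s$, arguing inside $T$ that any such proof forces $T\vdash\phi_i$. What your approach buys is a direct syntactic reading; what the paper's approach buys is that the $\Sigma_{n+1}$ complexity of $\psi$ is immediate --- it is literally a halting statement about an $\emptyset^{(n)}$-computation --- so none of the delicate packaging you flag as ``the hard part'' is needed.

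One point in your sketch does need repair: you cannot package ``the information that $\not\vdash_T\phi$'' into a threat, since unprovability is $\Pi^0_1$ and admits no finite certificate, nor can it sit inside a single existential block. The paper's construction (and, in effect, Smory\'nski's) replaces this by $\neg\phi$ --- the falsity of $\phi$ --- and then invokes soundness of $T$ only externally, to pass from ``$T\vdash\phi_i$'' to ``$\phi_i$ is true''. If you carry out your plan in detail, the threat should record that $\phi$ is false, not that it is unprovable; with that amendment your outline goes through.
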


\begin{proof}
Suppose $S$ is the set of all $\Delta_{n+1}$-sentences. We need a $\psi$
such that, for all $\phi \in S$, if $T+\neg \phi$ is consistent, then $\psi$ is
independent of $T+ \neg \phi$.

Recall that $S$ is c.e., so let ${\phi_0, \phi_{1}, \ldots}$ be a recursive
enumeration of $S$. Let $\textrm{Dim}_T$ denote the standard provability
predicate, expressing, via codes, whether a given number is a proof of a
given formula. For each $j$, we define the function $f_j$ as follows: On
input $s$, search for the least $i$ such that either
\begin{enumerate}
\item[(a)] $\textrm{Dim}_T(s,\ulcorner \Phi_j^{\emptyset^{(n)}}(0)=1
\rightarrow \phi_i\urcorner) \wedge \neg \phi_i$, or
\item[(b)] $\textrm{Dim}_T(s, \ulcorner\neg
\Phi_j^{\emptyset^{(n)}}(0)=1 \rightarrow
\phi_i\urcorner ) \wedge \neg \phi_i$,
\end{enumerate}
and define
\[
f_j(s)=
\begin{cases}
1 &\text{if $i$ is found, and (a) holds},\\
0 &\text{if $i$ is found, and (b) holds,}\\
\uparrow &\text{if no such $i$ is found}.
\end{cases}
\]
By the Relativized Parameter Theorem, $f_{j}=\Phi^{\emptyset^{(n)}}_{h(j)}$,
for some computable function $h$; and let $g$ be a computable function so
that
\[
\Phi_{g(j)}^{\emptyset^{(n)}}(x)=
\begin{cases}
\uparrow &\text{if $f_j$ has empty domain},\\
1 &\text{if the first value of $f_j$ is $1$
         (i.e. $f_j(m)=1$ where $m$ is the least}\\
\mbox{} &\text{number in the domain of $f_j)$},\\
0 &\text{if the first value of $f_j$ is $0$}
\end{cases}
\]

In the following, we often identify statements relative to $f_j$ or
$\Phi_j^{\emptyset^{(n)}}$  with their formal arithmetical translations. Let
$e$ be a fixed point for $g$. That is:
\[
\Phi_e^{\emptyset^{(n)}}=\Phi_{g(e)}^{\emptyset^{(n)}}.
\]
Let $\psi$ be the sentence which says that $\Phi_e^{\emptyset^{(n)}}(0)=1$.

Claim:
\begin{enumerate}
\item If $s$ is a proof from $\psi$ to $\phi_i$ for some $i$, then
    $T$ proves $\phi_i$ (and thus $\phi_i$ is true);
\item If $s$ is a proof from $\neg \psi$ to $\phi_i$ for some $i$,
    then $T$ proves $\phi_i$ (and thus $\phi_i$ is true).
\end{enumerate}

\begin{proof}
We induct on $s$, assuming the lemma for all $t<s$. Since the claim is true
for all $t<s$, $f_e(t)$ diverges for all such $t$. Note that $T$ can prove
that $f_e(t)$ diverges for all $t<s$. For each $t<s$, $T$ can determine if
$t$ is a proof of $\psi \rightarrow \phi_i$ or $\neg\psi \rightarrow \phi_i$
for some $i$. If not, then clearly $f_e(i)$ diverges. If it is a proof of
that form, then by our inductive hypothesis, $T$ also proves $\phi_i$. Thus,
$T$ proves that $f_e(t)$ diverges, since $\neg\phi_i$ is a condition for
convergence of $f_e(t)$.

\begin{enumerate}
\item If $s$ is a proof from $\psi$ to $\phi_i$, then $s$ is a proof of
    $\phi_i$ from $\Phi_e^{\emptyset^{(n)}}(0)=1$. $T$ can argue: Either
    $\phi_i$ is true or $\phi_i$ is false. If $\phi_i$ is false, then
    $f_e(s)$ converges to $1$. This means that
    $\Phi^{\emptyset^{(n)}}_{g(e)}(0)=1$. But then this means that $\phi_i$
    is true (from the proof $s$). Thus, $T$ has proved that $\phi_i$  is
    true.
\item If $s$ is a proof from $\neg \psi$ to $\phi_i$, then $s$ is a proof
    of $\phi_i$ from $\neg \Phi_e^{\emptyset^{(n)}}(0)=1$. $T$ can argue:
    Either $\phi_i$ is true or $\phi_i$ is false. If $\phi_i$ is false,
    then $f_e(s)$ converges to $0$. This means that
    $\Phi^{\emptyset^{(n)}}_{g(e)}(0)=0$. But then this means that $\phi_i$
    is true (from the proof $s$). Thus, $T$ has proved that $\phi_i$ is
    true.
\end{enumerate}
Hence, for any $i$ such that $\phi_i$ is not a theorem of $T$, there can be
no proof in $T$ of $\psi \rightarrow \phi_i$, or $\neg \psi \rightarrow
\phi_i$.
\end{proof}
\end{proof}

\begin{remark}
Notice that in the previous lemma, the sentence $\psi$ associated with  the
set of all $\Delta_{n+1}$-sentences  is $\Sigma_{n+1}$.
\end{remark}

A class $\mathcal{A}$ of $\Delta^{0}_{2}$ sets is called \emph{computable} if
there is a $\Delta^{0}_{2}$ predicate $A(e,x)$ such that $\mathcal{A} =
\{V_{e}: e \in \omega\}$, where
\[
V_{e}=\{x: A(e,x)\}.
\]
If $\{A(e,x,s): s \in \omega\}$ is a computable approximation to $A(e,x)$,
i.e. $\lim_{s} A(e,x,s)= A(e,x)$ for every $x$, then we let $V_{e,s}(x)=
A(e,x,s)$.

\begin{theorem}\label{thm:main}
If  a class $\mathcal{A}$ of $\Delta^{0}_{2}$ sets is computable, then there
is a $p$-dialectical system $p$ with connectives such that $A_{p}$ is a
completion of Peano Arithmetic and $A_{p} \notin \mathcal{A}$.
\end{theorem}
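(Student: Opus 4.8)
The plan is to construct a $p$-dialectical system $p=\langle H,f,f^{-},c\rangle$ in which $H$ is the first-order deduction operator of a c.e.\ theory $T^{*}\supseteq\mathrm{PA}$ built alongside the system; under the identification of numbers with sentences, $\neg,\rightarrow,\wedge,\lor$ are the (injective, computable) syntactic operations, so $p$ is automatically a system with connectives, with $H(\emptyset)=\Thm_{T^{*}}$ and $c=\ulcorner 0=1\urcorner$. We keep $T^{*}$ consistent and make $p$ loopless; then by Theorem~\ref{thm:p-completion} the set $A_{p}$ is a completion of $T^{*}$, hence of $\mathrm{PA}$, and (for a good approximation to $H$) it is $\Delta^{0}_{2}$. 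So it remains only to force $A_{p}\ne V_{e}$ for every $e$. Fix a computable approximation $\{V_{e,s}\}$ to the $\Delta^{0}_{2}$ predicate $A(e,x)$ presenting $\mathcal{A}$, and partition the slots of $f$ into disjoint infinite blocks: a block $B_{e}$ devoted to the requirement $R_{e}\colon A_{p}\ne V_{e}$, together with ``bookkeeping'' slots for the remaining sentences.

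For $R_{e}$ we fix a \emph{witness} sentence $\psi_{e}$ and aim to force $\psi_{e}\in A_{p}\iff\psi_{e}\notin V_{e}$; by Theorem~\ref{thm:summaryp-dialectical} this is the same as controlling whether $c\in H(L(u_{e})\cup\{\psi_{e}\})$ at the slot $u_{e}$ of $B_{e}$ that proposes $\psi_{e}$. To make distinct witnesses harmless to each other we invoke Lemma~\ref{lem:fundamental} at a strictly increasing sequence of levels $n_{0}<n_{1}<\cdots$, chosen so that $\Delta_{n_{e}+1}$ contains every $\psi_{e'}$ with $e'<e$, obtaining $\psi_{e}\in\Sigma_{n_{e}+1}$ with both $\psi_{e}$ and $\neg\psi_{e}$ being $\Delta_{n_{e}+1}$-conservative over $\mathrm{PA}$. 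Since $\Delta_{n_{e}+1}$ is closed under the propositional connectives, compactness upgrades this to: $\psi_{e}$ is $\Delta_{n_{e}+1}$-conservative over $\mathrm{PA}+\Gamma$ for every consistent set $\Gamma$ of $\Delta_{n_{e}+1}$ sentences. Hence, inductively, the family $\{\psi_{e}\}$ is ``mutually generic'': committing to any consistent finite pattern of signs for the other witnesses leaves each $\psi_{e}$ independent over $\mathrm{PA}$ and decides no new $\Delta_{n_{e}+1}$ sentence. This genericity is what will keep $T^{*}$ consistent and the system loopless.

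The heart of the argument is to make the $p$-dialectical procedure react to the limit behaviour of $V_{e}$. For each $e$ we enumerate into $T^{*}$ a chain of auxiliary axioms whose entry is timed to the (computably detectable) stages at which $V_{e,s}(\psi_{e})$ changes value, and we design the $f^{-}$-orbits on $B_{e}$ so that each such change triggers a revision swapping the relevant slot between a sentence $T^{*}$-provably equivalent to $\psi_{e}$ and one $T^{*}$-provably equivalent to $\neg\psi_{e}$; the auxiliary axioms are chosen --- using the conservativity of $\psi_{e}$, and fresh material supplied by $f^{-}$ --- so that $T^{*}$ remains consistent and no revision is ever wrongly triggered. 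Because $V_{e}$ is $\Delta^{0}_{2}$, $V_{e,s}(\psi_{e})$ changes only finitely often, so the chain is finite, each slot of $B_{e}$ is revised only finitely often, and in the limit $u_{e}$ decides $\psi_{e}$ oppositely to $V_{e}$, i.e.\ $\psi_{e}\in A_{p}\iff\psi_{e}\notin V_{e}$; thus $A_{p}\ne V_{e}$. Getting this timing and these auxiliary axioms exactly right --- in particular ensuring that every chain terminates and that no spurious contradiction is ever derivable, so that $T^{*}$ stays consistent and $p$ stays loopless --- is the main obstacle, and it is here that Lemma~\ref{lem:fundamental} does the essential work.

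Finally, the bookkeeping is arranged so as not to interfere: every sentence that $\mathrm{PA}$ proves equivalent to some $\pm\psi_{e}$ is (eventually, using the revising flexibility) routed into $B_{e}$, and the truly neutral sentences are handled by the plain $p$-dialectical clauses, where mutual genericity guarantees they decide no future witness and never produce a contradiction. Consequently every slot is revised finitely often, $p$ is loopless, $T^{*}$ is consistent, and by Theorem~\ref{thm:p-completion} the set $A_{p}$ is a completion of $\mathrm{PA}$ lying in $\Delta^{0}_{2}$; by the previous paragraph $A_{p}\ne V_{e}$ for every $e$, so $A_{p}\notin\mathcal{A}$.
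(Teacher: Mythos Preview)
Your overall architecture matches the paper's: build a c.e.\ extension $T^{*}\supseteq\mathrm{PA}$ alongside $f,f^{-}$, keep the system loopless so Theorem~\ref{thm:p-completion} yields a completion, and diagonalize against each $V_{e}$ by reacting to the finitely many changes of $V_{e,s}$ at a chosen witness. Your appeal to Lemma~\ref{lem:fundamental} for independent witnesses is also on the right track. But the core mechanism --- how to flip $A_{p}(\psi_{e})$ back and forth --- is not correctly specified, and this is exactly where the paper does the real work. You describe the flip as ``swapping the relevant slot between a sentence $T^{*}$-provably equivalent to $\psi_{e}$ and one $T^{*}$-provably equivalent to $\neg\psi_{e}$''. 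To trigger a revision away from a current top $\theta$ that is $T^{*}$-equivalent to $\psi_{e}$ you must arrange $c\in K(L(u)\cup\{\theta\})$, i.e.\ $\neg\theta\in T^{*}$, hence $\neg\psi_{e}\in T^{*}$; this irrevocably decides $\psi_{e}\notin A_{p}$ and kills any future flip. If instead the equivalence is only added after $\theta$ has been refuted, then $\theta$ is \emph{not} $T^{*}$-equivalent to $\psi_{e}$ after all. So the mechanism as written does not work, and you yourself flag this as ``the main obstacle'' without resolving it.

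The paper's device is different and worth internalising. It reserves \emph{two} slots per requirement: slot $3e+1$ carries the witness $x_{e}$, while slot $3e$ carries an auxiliary helper $y_{e}(k)$. To eject $x_{e}$ one adds the axiom $\neg(y_{e}(k)\wedge x_{e})$; since $y_{e}(k)\in L(3e+1)$, this makes $L(3e+1)\cup\{x_{e}\}$ $K$-inconsistent without deciding either of $\pm x_{e}$ in $T^{*}$. To restore $x_{e}$ one then adds $\neg y_{e}(k)$, forcing the $p$-procedure to revise slot $3e$ to a fresh $y_{e}(k+1)=f^{-}(y_{e}(k))$; the old conjunctive axiom becomes inert because $y_{e}(k)$ has left $L(3e+1)$. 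All flips are thus carried by discarding successive helpers, never by refuting $x_{e}$ or $\neg x_{e}$. Consistency is maintained because every helper (and every $x_{e}$, and every $f^{-}$-value) is chosen \emph{dynamically} via a function $\Gamma$ that applies Lemma~\ref{lem:fundamental} to the set of all Boolean combinations of sentences mentioned so far --- rather than by fixing a ladder of levels $n_{e}$ in advance --- and it is this uniform dynamic choice that makes the consistency verification (Lemmas~\ref{lem:satisfaction0} and~\ref{lem:satisfaction3}) go through.
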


\begin{proof}
Suppose we are given a computable class of $\Delta^{0}_{2}$ sets
$\mathcal{A}=\{V_{e}: e \in \omega\}$.  We want to build a $p$-dialectical
system $p=\langle K, f, f^-, c \rangle$ with connectives, satisfying the
requirements
\[
N_e: A_p \ne V_e,
\]
and such that $A_{p}$ is a completion of Peano Arithmetic. Let again $T$
denote Peano Arithmetic, and let $H$ be the enumeration operator given by
\[
H=\{\langle x, D\rangle: D \vdash_{T} x\}.
\]
Via a suitable G\"odel numbering, throughout the proof, numbers should be
thought of as sentences of the language of $T$. We choose $c$ to be the usual
contradiction $0=1$.

The construction is by stages. At the end of stage $s$ we will have defined a
finite set $\Ax_{s}$ of axioms to be added to the axioms of $T$, and finite
approximations $f^s$, $f^-_s$ to computable functions $f$, $f^-$,
respectively, so that $f=\bigcup_s f^s$, $f^-=\bigcup_s f^-_s$, and
$\Ax=\bigcup_s \Ax_s$ is a c.e. set. In order to define a $p$-dialectical
system, we will have also to specify a suitable enumeration operator $K$:
since the construction is computable, the theory $S_{\infty}$ obtained by
adding all axioms $\bigcup_{s} \Ax_{s}$ to those of $T$ is a c.e. extension
of $T$, and we will let
\[
K=\{\langle x, D\rangle: D \vdash_{S_{\infty}} x\}.
\]
\begin{lemma}\label{lem:Kobvious}
$K$ is a algebraic closure operator with connectives, and for every set $X$, $H(X)
\subseteq K(X)$.
\end{lemma}

\begin{proof}
Immediate.
\end{proof}

By Lemma~\ref{lem:fundamental}  let $\Gamma$ be a computable function which
with every finite set $S$ of sentences associates a sentence $x$ such that
\[
(\forall S' \subseteq S)[c \notin H(S') \Longrightarrow x
\notin H(S') \,\&\, \neg x \notin H(S')].
\]
We say in this case that $x$ has been chosen to be \emph{independent} of
every such $S'$. In the rest of the proof, we will distinguish between
$K$-consistency (i.e. consistency in $S_\infty$: a set $X$ is
\emph{$K$-consistent} if $c \notin K(X)$) and $H$-consistency (i.e.
consistency in $T$: a set $X$ is \emph{$H$-consistent} if $c \notin H(X)$).

\subsection*{The strategy to meet $N_e$}
We outline the construction and the strategy to meet the requirement $N_e$,
and we describe what our desired $p$-dialectical system should achieve. In
addition to $f^s, f^-_s$, throughout the construction we use several
computable parameters, which are modified stage by stage: $x_{e,s}$, $\hat
\rho_s(u)$, $\hat r_s(u)$, $A^s$. In particular $A^s$ stands for a finite
set, such that, for every $u$, $A(u)=\lim_s A^s(u)$ exists; the parameters
$x_{e,s}$, $\hat \rho_s(u), \hat r_s(u)$ will be such that $x_e=\lim_s
x_{e,s}$, $\lim_s \hat \rho_s(u)=\hat \rho(u)$, and $ \lim_s \hat r_s(u)=\hat
r(u)$ exist, and $\rho_s(u)$ will coincide with $\rho(u)$ of the
$p$-dialectical system we are aiming at; moreover $A(x_e)=A_p(x_e)$ for every
$e$.

We reserve the two slots $3e, 3e+1$ to attack and satisfy $N_{e}$. The action
may take place at several different stages: at each stage $s$ we denote by
$C=C_{s}$ the set consisting of all (finitely many) Boolean combinations of
the sentences corresponding to the numbers so far mentioned and used in the
construction.

The first time at which we attack requirement $N_e$ we let $f_{3e}=y_{e}$,
$f_{3e+1}=x_{e}$, where $C:=C_{s}$, and
\begin{align*}
&x_{e} = \Gamma(C)\\
&y_{e} = \Gamma (C \cup \{x_{e}\}).
\end{align*}
We then execute the following cycle, which starts with $k=0$,
$y_{e}(0)=y_{e}$:
\begin{enumerate}
  \item wait until the least stage $t>s$ such that
  $x_{e} \in V_{e,t}$, then add the axiom $\neg (y_{e}(k) \wedge
      x_{e})$ in $\Ax$; extract $x_{e}$ from $A$ (i.e., define
      $A^t(x_e)=0$); go to (2) with $s:=t$;
  \item wait until the least stage $t>s$ such that
  $x_{e} \notin V_{e,t}$ then add the axiom $\neg y_{e}(k)$ in $\Ax$,
      define $y_{e}(k+1)=f^{-}(y_{e}(k))=\Gamma (C)$;  add $x_{e}$ into $A$
      (i.e., define $A^t(y_{e}(k+1))=1$ and $A^t(x_{e})=1$); go to (1) with
      $s:=t$ and $k:=k+1$.
\end{enumerate}

\subsubsection*{Outcomes of the strategy}
The cycle eventually stops since $V_e(x_e)$ may change only finitely many
times, and eventually $A(x_e)\ne V_e(x_e)$. Having in mind the
$p$-dialectical system which we want to build and its characterizing
parameters $\rho, r, L$, this cycle must be viewed as our attempt to build a
stack $r(3e)$ of which the number $y_{e}(k)$ becomes the top  when it is
appointed; similarly, when $x_{e}$ is initially appointed we have $r(3e+1)=
\langle x_{e} \rangle$. Our intended goal is that if $L(3e)$ is
$K$-consistent (i.e. $c \notin K(L(3e))$) and we add the axiom $\neg y_e(k)$
in $\Ax$ then $y_e(k)$ will be discarded by the $p$-dialectical procedure (as
$L(3e)\cup \{y_e(k)\}$ is not $K$-consistent) and it will be replaced by
$y_e(k+1)$  so as to momentarily have $L(3e) \cup \{y_e(k+1)\}$
$K$-consistent; so the $p$-dialectical procedure will put back $x_{e}$ as a
thesis. If $L(3e) \cup \{y_{e}(k)\}$ is $K$-consistent, and we add the axiom
$\neg (y_e(k) \wedge x_{e})$ in $\Ax$, then the $p$-dialectical procedure
keeps $L(3e) \cup \{y_{e}(k)\}$ $K$-consistent and discards $x_{e}$ as a
thesis. This process is repeated as many times as are needed to diagonalize
$A(x_{e})\ne V_e(x_{e})$. Use of the function $\Gamma$ in choosing each
$y_{e}(k)$ and $x_{e}$ allows us to conclude that $y_{e}(k)$ does not clash
with $L(3e)$ to derive $c$, and $x_{e}$ does not clash with $L(3e+1)$ to
derive $c$, for any reasons other than those due to which we add axioms in
$\Ax$, i.e.\ in order to diagonalize $A$ against $V_{e}$. If our
$p$-dialectical system is able to mirror faithfully the cycle for $N_e$ as
described, then $A_p(x_e)= A(x_e)$ and thus $A_p(x_e)\ne V_e(x_e)$.

\subsubsection*{Other issues in defining $f$ and $f^{-}$} We must also come up
with $f$ being a permutation of $\omega$, and with $f^-$ being total.

At non-zero even stages we take care of surjectivity of $f$, by picking the
least available slot $u$ with $u=3k+2$ for some $k$, and the least $x$ which
has not as yet been proposed by $f$: we define $f_{u}=x$ and if $f^{-}(x)$ is
as yet undefined then we define $f^{-}(x)=a_0$, where $a_0 \in H(\emptyset)$.
Note that injectivity of $f$ is immediate by construction.

At non-zero even stages we also pick the least $z$ such that $f^-(z)$ has not
as yet been defined, and we put $f^-(z)=\Gamma(C)$ with $C$ evaluated at
these stages.

\subsection*{The construction}
The modifications imposed on a parameter during a stage of the construction
will determine the final value of the parameter at the end of the stage. It
is understood that a parameter which is not explicitly modified at a stage
$s$, maintains, at the end of stage $s$, the same value as the one it
possessed at the beginning of stage $s$. Throughout a stage $s+1$, if a
parameter is mentioned without specifying any stage of approximation, then it
is understood to be evaluated with the value it possessed by the end of stage
$s$. When we apply the function $\Gamma$ at stage $s+1$, without loss of
generality, we may assume that $\Gamma$ picks an element which is different
from all numbers so far mentioned in the construction, in particular from
every number already in the domain of $f^{-}$: otherwise, as in Craig's trick
for computable axiomatizability of c.e. theories, take as many iterations $w
\wedge w \wedge \dots$ of the conjunctive connective $\wedge$ on the value
$w$ provided by $\Gamma$ as are needed to achieve this goal.

\subsubsection*{Step $0$}
Choose a c.e. injective sequence $a_{0}< a_{1}< \cdots$ in $H(\emptyset)$,
and define $f^{-}_{0}(a_{i})=a_{i+1}$, for every $i$. Define also
$f^{0}=A^{0}=\emptyset$ and $\Ax_0=T$. All the other parameters are
undefined.

\subsubsection*{Step $s+1$, odd}
We say that a requirement $N_e$\emph{ requires attention at $s+1$}, if either
\begin{enumerate}
  \item[(r1)] $x_{e}$ is not defined; or
  \item[(r2)] either $x_{e} \in V_e$ but no axiom $\neg (\hat{\rho}(3e)
      \wedge x_{e})$ lies in $\Ax$, or $x_{e} \notin V_e$ and
      $\neg \hat{\rho}(3e) \in \Ax$.
\end{enumerate}

Consider the least $e$ such that $N_e$ requires attention, and take action
accordingly:

\begin{enumerate}
  \item[(a1)] if $N_e$ requires attention through (r1), then define (where
      $C$ is the current approximation to the set $C$ as in the above
      description of the strategy for $N_e$)
\begin{align*}
&x_{e,s+1} = \Gamma(C)\\
&y_{e,s+1} = \Gamma (C \cup \{x_{e,s+1}\}).
\end{align*}
Define  $f^{-}_{s+1}(x_{e,s+1})=\Gamma (C \cup \{x_{e,s+1}, y_{e,s+1}\})$.
\item[(a2)] if $N_e$ requires attention through (r2) then we further
      distinguish the following two cases:
      \begin{enumerate}
        \item[(a21)] if $x_e \in V_e$ then add the axiom $\neg
            (\hat{\rho}(3e) \wedge x_{e})$ in $\Ax_{s+1}$; let
            \[
            \hat
            r_{s+1}(3e+1)=r(3e+1) \widehat{\mbox{ }} \langle
            f^{-}_{s+1}(x_{e,s+1}) \rangle,
            \]
            and $A^{s+1}(x_e)=0$;
        \item[(a22)] if $x_{e} \notin V_e$, then add an axiom $\neg
            \hat{\rho}(3e)$ in $\Ax_{s+1}$; define
            $f^{-}_{s+1}(\hat{\rho}(3e))=\Gamma (C)$ if
            $f^{-}(\hat{\rho}(3e))$ is undefined; let
            $\hat{\rho}_{s+1}(3e)=f^{-}_{s+1}(\hat{\rho}(3e))$; let
\[
\hat r_{s+1}(3e)=\hat r(3e) \widehat{\mbox{ }} \langle \hat{\rho}_{s+1}(3e)\rangle.
\]
Let also $A^{s+1}(x_e))=1$.
     \end{enumerate}
\end{enumerate}
\emph{Resetting.} For all relevant $u>3e+1$ i.e. $u$ of the form $3e'$ or
$u=3e'+1$, let $\hat r_{s+1}(u)=\langle { }\rangle$, and consequently
$\hat\rho_{s+1}(u)$ be undefined. Add the axioms $\neg \hat{\rho}(u)$ in
$\Ax_{s+1}$.

On all remaining $u$ which are different from the $x_i$ that are still
defined at the end of this stage, define $A^{s+1}(u)=0$.

\subsubsection*{Step $s+1>0$, even}
Let $u$ be the least available slot of the form $u=3e+2$, and let $x$ be the
least number such that $x \notin \range(f)$: define $f_{u}^{s+1}=x$ and
$f^-_{s+1}(x)=a_0$ if $f^-(x)$ has not been already defined; otherwise, let
$i$ be the greatest number such that $(f^-)^i(x)$ is already defined, and
define $f^-_{s+1}((f^-)^i(x))=a_0$.

Pick the least $z$ such that $f^-(z)$ is not as yet defined, define
$f^-_{s+1}(z)=\Gamma (C)$.

\subsection*{The verification}
The following lemma is an easy consequence of the construction.

\begin{lemma}
The function $f$ is bijective and the function $f^{-}$ is acyclic. Hence $p=
\langle K, f, f^{-}, c\rangle$ is a $p$-dialectical system.
\end{lemma}

\begin{proof}
$f^{-}$ is acyclic because we define it through $\Gamma$ which picks at each
stage numbers not in the domain of $f^{-}$ and because of the way we have
arranged things when we define or have defined at $0$, $f^{-}(v)=a_i$ for
some $i$. The rest of the claim is obvious by Lemma~\ref{lem:Kobvious}.
\end{proof}

Let $S_{\infty}(=K(\emptyset))$ be the c.e. extension of Peano Arithmetic,
having, as additional axioms, the axioms added during the construction.
Define the \emph{entry stage} of a number $v$ which ever appears in a string
$\hat{r}(u)$ with $u \in \{3e, 3e+1: e\in \omega\}$ to be the least $s$ at
which $v$ enters the range of $f$ or $f^{-}$. Next we define the entry stage
of an axiom (one of the axioms added during the construction): the
\emph{entry stage} of an axiom $\neg v$ is the entry stage of $v$, and that
of an axiom $\neg(v \wedge z)$ is the entry stage of $v$.
%the \emph{entry stage} of an element of the form $\neg x_{e}$ is that of
%$x_{e}$.
In the verifications below, it will be useful to keep in mind that if $X, Y$
are sets such that $X\subseteq \Ax$, and $Y$ is $K$-consistent, then $Y \cup
X$ is $H$-consistent.

\begin{lemma}\label{lem:first-limit}
Each requirement acts finitely often. In particular, for every $e$ $\lim_{s}
x_{e,s}=x_{e}$ exists, for every $u \in \{3e, 3e+1: e \in \omega\}$,
$\lim_{s} \hat r_{s}(u)=\hat r(u)$ exists, and thus $\lim_{s} \hat
\rho_{s}(u)=\hat \rho(u)$ exists; finally, $\lim_s A_s(x_e)=A(x_e)$ exists
and $A(x_e)\ne V_e(x_e)$.
\end{lemma}

\begin{proof}
Assume inductively that each $N_i$, $i<e$, eventually stops acting. After
every $N_i$, with $i<e$ has ceased to act, there is a least stage $t_0$ such
that $N_e$ defines the final value of $x_e$. At that point we keep modifying
$\hat r(3e)$ and $\hat r(3e+1)$, only in response to changes in $V_e(x_e)$,
but as $V_e$ is a $\Delta^0_2$ set, this can happen only finitely many times.
The claim about $A(x_e) \ne V_e(x_e)$ easily follows from the construction.
\end{proof}

The following lemmata intend to explicitly relate the above construction to
the $p$-dialectical system $p=\langle K, f, f^-,c\rangle$. In the lemma and
its proof, $\rho, r, L$ are the parameters associated with the
$p$-dialectical system $p=\langle K, f, f^-, c \rangle$.

\begin{lemma}\label{lem:satisfaction0}
Suppose that $u=3e$ is such that $L(u)$ exists in the limit. Then $x_e=\lim_s
x_{e,s}$ exists. Moreover
%%, for every $v<u$ if $v=3e'$ then $r(v)=\hat r(v)$,
assume that $L(u) \cup \{\neg x_{e}\}$ is $K$-consistent; then for every $s$
following the last stage at which $L(u)$ changes,

\begin{enumerate}
\item $c \in K(L(u) \cup \{\hat\rho_{s}(u)\})$ if and only if $\neg
    \hat\rho_{s}(u)$ is among the final axioms of $S_\infty$;

\item if $L(u) \cup \{\hat\rho_{s}(u)\}$ is $K$-consistent then $c \in
    K(L(u) \cup \{\hat\rho_{s}(u)\} \cup \{x_{e}\})$ if and only if $\neg
    (\hat\rho_s(u) \wedge x_{e}) $ is among the final axioms of $S_\infty$.
\end{enumerate}
\end{lemma}

\begin{proof}
Suppose that $u=3e$ satisfies the assumptions. In particular from the
existence of $L(u)$ in the limit it is clear that $x_e=\lim_s x_{e,s}$
exists: in fact, once appointed after $L(u)$ ceases to change, we have that
$x_e$ does not change any more. Notice that in both claims (1) and (2) the
right-to-left implication is trivial. So we need only prove the left-to-right
implication of each equivalence. Let us first consider the first item.

\begin{enumerate}

  \item Suppose that $c \in K(L(u) \cup \{\hat\rho_{s}(u)\})$. This means
      that there is a finite subset $X \subseteq \Ax$ such that $c \in
      H(L(u) \cup \{\hat\rho_{s}(u)\} \cup X)$: assume that $X$ is
      $\subseteq$-minimal with this property. It can not be $X=\emptyset$
      as $L(u)$ is $H$-consistent and thus $\hat\rho_{s}(u)$ would be
      chosen independently of $L(u)$: notice that the value
      $\hat\rho_{s}(u)$ has not been chosen before $L(u)$ stops changing
      because of the resetting procedure at the end of odd stages. Let $a
      \in X$ be of greatest entry stage. Notice that, by minimality, $c
      \notin H(L(u) \cup \{\hat\rho_{s}(u)\} \cup (X \smallsetminus
      \{a\}))$, i.e. the set $L(u) \cup \{\hat\rho_{s}(u)\} \cup (X
      \smallsetminus \{a\})$ is $H$-consistent.

We distinguish the two possible cases due to which $a$ can occur as a new
axiom:

\begin{enumerate}
  \item Case $a= \neg v$, for some $v$. In this case, if $v \notin L(u)
      \cup \{\hat\rho_{s}(u)\}$ then (as $v$ has greatest entry stage) we
      have chosen $v$ to be independent of $L(u) \cup \{\hat\rho_{s}(u)\}
      \cup (X \smallsetminus \{a\})$, contradicting that $c \in H(L(u)
      \cup \{\hat\rho_{s}(u)\} \cup X)$ which implies, by logic, that $v
      \in H(L(u) \cup \{\hat\rho_{s}(u)\} \cup (X \smallsetminus
      \{a\}))$. Hence $v \in L(u) \cup \{\hat\rho_{s}(u)\}$. On the other
      hand, it can not be $v\in L(u)$ since $L(u)$ is $K$-consistent.
      Therefore, $v=\hat\rho_{s}(u)$ as desired.

  \item Case $a= \neg (v \wedge z)$, for some $v, z$.  (Recall that in
      this case the entry stage of $a$ is by definition that of $v$.)
      Then $v \wedge z \in H(L(u) \cup \{\hat\rho_{s}(u)\} \cup (X
      \smallsetminus \{a\}))$, hence $v \in H(L(u) \cup
      \{\hat\rho_{s}(u)\} \cup (X \smallsetminus \{a\}))$: if $v \notin
      L(u) \cup \{\hat\rho_{s}(u)\}$, then $v \notin L(u) \cup
      \{\hat\rho_{s}(u)\} \cup (X \smallsetminus \{a\})$ contradicting
      that $v$ (being of greatest entry stage) has been chosen to be
      independent of $L(u) \cup \{\hat\rho_{s}(u)\} \cup (X
      \smallsetminus \{a\})$. Thus $v \in L(u) \cup \{\hat\rho_{s}(u)\}$.
      If $v \in L(u)$ then $v=\hat\rho(u')$ and $z=\hat\rho(v'+1)$ with
      $v'+1<u$, i.e.\ the pair $v',z'$ refer to a requirement $N_{e'}$
      with $z=x_{e'}$ for some $e'<e$, contradicting that $L(u)$ is
      $K$-consistent. Thus we conclude that $v=\hat\rho_{s}(u)$ and
      $z=x_e$, and thus $v \rightarrow x_e \in H(L(u)\cup (X
      \smallsetminus \{a\}))$. By logic we have that $\neg v \in H(\{\neg
      x_{e} \} \cup L(u) \cup (X \smallsetminus \{a\})$, contradicting
      the fact that $v$ is independent of $L(u) \cup \{\neg x_{e}\} \cup
      (X \smallsetminus \{a\})$, as by assumption $L(u) \cup\{\neg
      x_{e}\}$ is $K$-consistent, and $v$ has greatest entry stage, and
      thus by construction $v$ has entry stage greater than or equal to
      that of $x_e$, but, if equal, the claim holds as well by the way we
      choose $v=y_e$ in (a1) of the construction.
\end{enumerate}

\item Suppose that $c \in K(L(u) \cup \{\hat\rho_{s}(u)\} \cup \{x_{e}\})$.
    As before, let $X \subseteq \Ax$ be a finite set such that $c \in
    H(L(u) \cup \{\hat\rho_{s}(u)\} \cup \{x_{e}\} \cup X)$, and $X$ is
    minimal with this property. As in the previous case, we may assume
    $X\ne \emptyset$ as $L(u) \cup \{x_{e}\}$ is $H$-consistent (being
    $x-e$ appointed after $L(u)$ has reached limit and being $L(u)$
    $H$-consistent) and thus by resetting $\hat\rho_{s}(u)$ is chosen
    independently of this set (the entry stage of $\hat\rho_{s}(u)$ is
    greater than or equal to that of $x_e$: if equal the claim follows by
    the way we choose $v=y_e$ in (a1) of the construction). Let again $a
    \in X$ be of greatest entry stage.

\begin{enumerate}
  \item Case $a= \neg v$, for some $v$. As in (1a), we are forced to
      conclude that $v= \hat\rho_s(u)$, which yields a contradiction the
      fact that $L(u) \cup \{\hat \rho_s(u)\}$ is $K$-consistent.

  \item  Case $a= \neg (v \wedge z)$, for some $v, z$. As in (1b) we can
      argue that $v=\hat \rho_s(u)$, and thus $z=x_{e}$.
\end{enumerate}

\end{enumerate}

\end{proof}

\begin{lemma}\label{lem:satisfaction3}
%For every $u$, $L(u)$ exists in the limit, for every $v<u$ if $v=3e'$ then
%$r(v)=\hat r(v)$, and if $u=3e$ then $L(u) \cup \{\neg x_{e}\}$ is
%$K$-consistent.
For every $u$, $L(u)$ exists in the limit; moreover if $u=3e$ then $L(u) \cup
\{x_{e}\}$, $L(u) \cup \{\neg x_{e}\}$ are $K$-consistent.
\end{lemma}

\begin{proof}
The proof is by induction on $u$.

\medskip

\emph{Cases $u=0, 1, 2$}.

Assume first that $u=0$. Then $L(0)=\emptyset$. It is enough to show that
 $\{\neg x_{0}\}$ is $K$-consistent. We show only that
$\{\neg x_{0}\}$ is $K$-consistent: the other case is similar. Notice that
$x_{0}=x_{0,s}$ for ever $s>0$, thus $x_0$ has least possible entry stage.
So, suppose that $c \in K(\{\neg x_{0}\})$ and let $X\subseteq \Ax$ be a
minimal set such that $c \in H(\{\neg x_{0}\} \cup X)$. It can not be
$X=\emptyset$ because $x_0$ is chosen independently of $\emptyset$ as $c
\notin H(\emptyset)$. Let $a \in X$ be of greatest entry stage.

We distinguish the two possible cases due to which $a$ can occur as a new
axiom:

\begin{enumerate}
  \item Case $a= \neg v$, for some $v$. It follows that $v \in H(\{\neg
      x_{0}\} \cup (X \smallsetminus \{a\})$, contradicting that $v$ has
      greatest entry stage and thus it is chosen independent of $\{\neg
      x_{0}\} \cup (X \smallsetminus \{a\})$ which is $H$-consistent.

  \item Case $a= \neg (v \wedge z)$, for some $v, z$. In this case we have
      that $v \wedge z  \in H(\{\neg x_{0}\} \cup (X \smallsetminus
      \{a\})$, and thus $v \in H(\{\neg x_{0}\} \cup (X \smallsetminus
      \{a\})$, contradicting that $v$ has greatest entry stage (at most
      equal to that of $x_{0}$, but this, as in (1b) of the proof of the
      previous lemma does not make things different) and thus it is chosen
      independent of $\{\neg x_{0}\} \cup (X \smallsetminus \{a\})$.
\end{enumerate}

By Lemma~\ref{lem:satisfaction0} this implies also the claim for $u=1, 2$, as
for these slots the $p$-dialectical procedure perfectly mirrors the
construction. Notice that consistency of $L(1)$ follows from
Lemma~\ref{lem:satisfaction0} and the fact that, for the final values of
$\rho$, we never add the axiom $\neg \rho(0)$, and consistency of $L(2)$
follows from the fact that we never add the axiom $\neg (\rho(0) \wedge x_0)$
in which case $\rho(1)=x_e$, or we do add this axiom and thus $\rho(1)=a_0\in
H(\emptyset)$.

\medskip

\emph{Cases $u=3e, 3e+1, 3e+2$}, with $e>0$.

Assume that $u=3e$, with $e>0$. We first observe that $L(3e)$ exists in the
limit, and is $K$-consistent. After $L(3e-1)$ remains unchanged, at
subsequent stages for $r(3e-1)$ we observe the following: if there is
$u<3e-1$ such that there are strings $\sigma, \tau$ with $r(u)=\sigma
\widehat{\mbox{ }} \langle f_{3e-1} \rangle \widehat{\mbox{ }} \tau$ (this
can happen for at most one $u$) then $r(3e-1)=\langle f_{3e-1} \rangle
\widehat{\mbox{ }} \tau$;
on the other hand by the way we define $f^-$, we see that $f_{3e-1}$ never
enters any of the stacks $r(u)$ for $u>3e-1$; the only remaining
possibilities are that either $r(3e-1)$ becomes $\langle f_{3e-1}\rangle$ if
$L(3e-1) \cup \{f_{3e-1}\}$ is $K$-consistent, or $\langle f_{3e-1}, a_0
\rangle$ otherwise. It follows also that $L(3e)$ is $K$-consistent.

It remains to see that $L(3e) \cup\{\neg x_{e}\}$ is $K$-consistent. Assume
that it is not $K$-consistent. Then there is a finite set $X \subseteq \Ax$
such that $c \in H(L(3e) \cup\{\neg x_{e}\} \cup X)$, and $X$ is minimal with
this property. As in the case $u=0$ we can exclude the possibility
$X=\emptyset$. Let $a \in X$ be of greatest entry stage. By an argument
similar to that for the case $u=0$, we conclude that either possible case,
i.e. $a$ is of the form $a=\neg v$ or $a=\neg (v\wedge z)$, leads to a
contradiction.

By Lemma~\ref{lem:satisfaction0}, the claim extends to $3e+1$ and $3e+2$ as
well.
\end{proof}

\begin{lemma}\label{lem:rho}
If $u \in \{3e, 3e+1: e \in \omega\}$ then $\hat{\rho}(u)=\rho(u)$.
\end{lemma}

\begin{proof}
For these $u$ the $p$-dialectical procedure faithfully mirrors the
construction. The only exception is that, by resetting, $\hat r(u)$  may not
coincide with $r(u)$ but is in any case a final segment of $r(u)$, as the
string $r(u)$ keeps records of all proposals made by $f^-$ including those
made even before $L(u)$ has stopped changing, whereas  $\hat r(u)$ is reset
every time $L(u)$ changes. But part of the resetting procedure is adding the
axiom $\neg \hat{\rho}(u)$ every time there is a change in $L_u$. Therefore
each such $\hat{\rho}(u)$ is discarded by the $p$-dialectical procedure, and
after $L(u)$ has reached limit the stack $r(u)$, after a few consecutive
discarding moves, starts to copy $\hat{r}(u)$.
\end{proof}

\begin{lemma}
For every $e$, the $p$-dialectical set $A_p$ is a completion of Peano
Arithmetic which satisfies $A_p \ne V_e$.
\end{lemma}

\begin{proof}
The system $p$ is loopless and so by Theorem~\ref{thm:p-completion} $A_p$ is
a completion. By Lemma~\ref{lem:rho}, it follows that $\rho(3e+1)=\hat
\rho(3e+1)$, and thus by Lemma~\ref{lem:first-limit}, $A(x_e)\ne V_e(x_e)$.
On the other hand, $A_p(x_e)=A(x_e)$. For this we use also that by
Theorem~\ref{thm:independence} $A_p$ consists exactly of the numbers that
eventually occupy $L=\bigcup_u L(u)$, for the final values $L(u)$, and $x_e
\in L$ if and only if $x_e \notin V_e$.
\end{proof}
This ends the prof of Theorem~\ref{thm:main}.
\end{proof}

\begin{corollary}\label{cor:p-q-strange}
There exists a $p$-dialectical system $p$ with connectives
such that $A_p$ is a completion of Peano Arithmetic, and $A_{p}$
is not dialectical.
\end{corollary}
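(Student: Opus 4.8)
The plan is to derive the corollary from Theorem~\ref{thm:main} by a suitable choice of the class $\mathcal{A}$. Recall from the introduction and from \cite{trial-errorsI,trial-errorsII} that every dialectical set is $\omega$-computably enumerable; it therefore suffices to exhibit a \emph{computable} class $\mathcal{A}$ of $\Delta^0_2$ sets that contains every $\omega$-c.e.\ set, and then to apply Theorem~\ref{thm:main} to this $\mathcal{A}$. The theorem yields a $p$-dialectical system $p$ with connectives such that $A_p$ is a completion of Peano Arithmetic and $A_p \notin \mathcal{A}$; since $\mathcal{A}$ contains all $\omega$-c.e.\ sets, $A_p$ is not $\omega$-c.e., and hence not dialectical.

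It remains to check that the $\omega$-c.e.\ sets are captured by a computable class in the sense of the definition preceding Theorem~\ref{thm:main}; this is a standard fact from the theory of the Ershov hierarchy. Concretely, a set $B$ is $\omega$-c.e.\ if and only if there are total computable functions $g(x,s)$ and $b(x)$ with $g(x,0)=0$, $B(x)=\lim_s g(x,s)$, and $|\{s : g(x,s)\ne g(x,s+1)\}|\le b(x)$ for every $x$. Enumerating all pairs of partial computable functions by indices $e=\langle e_0,e_1\rangle$, one defines a computable double sequence $\{V_{e,s}\}$ as follows: put $V_{e,0}(x)=0$ for all $x$, and at stage $s+1$ and input $x$, freeze the value of $V_e(x)$ if $\varphi_{e_1}(x)$ has not yet converged, and otherwise, writing $b$ for $\varphi_{e_1}(x)$, change $V_e(x)$ to the value of $\varphi_{e_0}(x,s+1)$ precisely when that computation converges, disagrees with $V_{e,s}(x)$, and strictly fewer than $b$ changes on input $x$ have so far been made. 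For every index $e$ the value $V_{e,s}(x)$ then changes only finitely often, so $V_e=\lim_s V_{e,s}$ is a well-defined $\Delta^0_2$ set uniformly in $e$, and $\mathcal{A}=\{V_e : e\in\omega\}$ is a computable class of $\Delta^0_2$ sets. Finally, if $e$ genuinely codes a witnessing pair $(g,b)$ for an $\omega$-c.e.\ set $B$, one checks that $V_e=B$; hence $\mathcal{A}$ contains every $\omega$-c.e.\ set, and in particular every dialectical set.

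Feeding this $\mathcal{A}$ into Theorem~\ref{thm:main} completes the argument, as described in the first paragraph. The only mild obstacle is the bookkeeping in the second paragraph --- turning the ``freeze-or-change'' recipe into an honest computable double sequence and verifying that a legitimate index reconstructs the intended set --- but this is entirely routine.
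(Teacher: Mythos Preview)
Your proof is correct and follows essentially the same route as the paper: apply Theorem~\ref{thm:main} with $\mathcal{A}$ taken to be (a computable class containing) the $\omega$-c.e.\ sets, and use the fact from \cite{trial-errorsII} that every dialectical set is $\omega$-c.e. The paper simply asserts that the $\omega$-c.e.\ sets form a computable class of $\Delta^0_2$ sets, whereas you spell out the standard indexing; the underlying argument is identical.
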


\begin{proof}
Apply the previous theorem, taking $\mathcal{A}$ to be the class of
$\omega$-c.e. sets, which by a result in \cite{trial-errorsII} contains all
dialectical sets, and is known to be a computable class of $\Delta^0_2$ sets.
\end{proof}

\begin{remark}
Notice that the $q$-dialectical system defined in the  proof of
Theorem~\ref{thm:from-p-to-q} need not preserve connectives, even if the
original $H$ does. This is fairly clear from the way $H^*$ is defined: on the
other hand, if the construction of $q$ from $p$ preserved connectives, then
as the result is independent of the approximation to $H^*$, it would be that
$A_d=A^\alpha_q$ where $\alpha$ is a good approximation to $H^*$. But then,
by Theorem~\ref{thm:q-with-connectvs-is-dialectical} $A^\alpha_q$ and thus
$A_p$ would be dialectical. It would follow that every $p$-completion is a
$d$-completion, contrary to Theorem~\ref{thm:main}.
\end{remark}

\end{document}